\documentclass{amsart}
\RequirePackage[colorlinks,citecolor=blue,urlcolor=blue,linkcolor=blue]{hyperref}

 \usepackage{tikz}
 \usetikzlibrary{patterns}
\usepackage{amsmath,amsthm,amsfonts,amssymb}
%,fancybox,float
% \usepackage{framed}
% \usepackage{color,soul}
\usepackage{soul}
 \usepackage{url,enumerate,soul}
\newcommand{\comment}[1]{}
\newcommand{\commYZ}[1]{}
\newcommand{\commJW}[1]{}
\newcommand{\longcomment}[1]{}
\newcommand{\hide}[1]{\fbox{\tiny \color{red} Hidden text}}
\renewcommand{\hide}[1]{\hrule  \fbox{\tiny \color{red} xtra begin} {\color{gray}\tiny #1 } \fbox{\tiny \color{red} xtra end}  \hrule}
\renewcommand{\longcomment}[1]{\fbox{\begin{minipage}{.95\textwidth}\color{magenta}#1\end{minipage}}}
\renewcommand{\comment}[1]{{\fbox{\footnotesize\color{magenta}#1}}}
\renewcommand{\commJW}[1]{\ovalbox{\footnotesize  \color{magenta} \hl{JW}: #1} }
\renewcommand{\commYZ}[1]{\ovalbox{\footnotesize  \color{magenta} \hl{YZ}: #1} }

%%Yizao macros
\def\fddto{\xrightarrow{\textit{f.d.d.}}}
\newcommand{\ind}{{\bf 1}}

\def\inddd#1{{\ind}_{\left\{#1\right\}}}

\newcommand{\proba}{\mathbb P}
\newcommand{\esp}{{\mathbb E}}

%environments
\newcommand{\eqnh}{\begin{eqnarray*}}
\newcommand{\eqne}{\end{eqnarray*}}
\newcommand{\eqnhn}{\begin{eqnarray}}
\newcommand{\eqnen}{\end{eqnarray}}
\newcommand{\equh}{\begin{equation}}
\newcommand{\eque}{\end{equation}}

\def\summ#1#2#3{\sum_{#1 = #2}^{#3}}
\def\prodd#1#2#3{\prod_{#1 = #2}^{#3}}

\newcommand{\eqd}{\stackrel{d}{=}}

\def\topp#1{^{(#1)}}

\def\abs#1{\left|#1\right|}

\def\ccbb#1{\left\{#1\right\}}
\def\sccbb#1{\{#1\}}
\def\pp#1{\left(#1\right)}
\def\spp#1{(#1)}
\def\bb#1{\left[#1\right]}

\def\mmid{\;\middle\vert\;}

\def\floor#1{\left\lfloor #1 \right\rfloor}
\def\sfloor#1{\lfloor #1 \rfloor}
\def\ceil#1{\left\lceil #1 \right\rceil}

\def\aa#1{\left\langle #1\right\rangle}

\def\vv#1{{\boldsymbol #1}}

\def\mand{\mbox{ and }}
\def\qmand{\quad\mbox{ and }\quad}

\def\qmwith{\quad\mbox{ with }\quad}
\def\mfa{\mbox{ for all }}

\def\mmas{\mbox{ as }}

\def\wt#1{\widetilde{#1}}

\def\what#1{\widehat{#1}}

% limit
\def\limn{\lim_{n\to\infty}}

\def\weakto{\Rightarrow}

%% letters

\def\R{{\mathbb R}}

\def\N{{\mathbb N}}

\def\BB{{\mathbb B}}

%WB lets see if this looks better
\def\qp#1{(#1;q)_\infty}

\def\qps#1{(#1;q)}

\newcommand{\hidecomment}[1]{}
%%%% Theorems
\theoremstyle{plain}
\newtheorem{theorem}{Theorem}[section]

\newtheorem{lemma}[theorem]{Lemma}
\newtheorem{proposition}[theorem]{Proposition}

\newtheorem{assumption}[theorem]{Assumption}

\theoremstyle{remark}
\newtheorem{definition}[theorem]{Definition}
\newtheorem{remark}[theorem]{Remark}

%%% Wlodek's macros
%% cal
% calligraphic

\def\<{\langle}
\def\>{\rangle}

\usepackage{dsfont}
\newcommand{\RR}{\mathbb{R}}

\newcommand{\NN}{\mathbb{N}}

%%% end of Wlodek's macros

% MATH ------------------------------------
\newcommand{\eps}{\varepsilon}

%%%%%%%%%%%%%%%%%%%%%%%%%%%%%%
\def\p{\mathsf p}
\def\g{{\color{blue}\mathsf q}}
\def\A{{\mathsf a}}

\def\C{{\mathsf c}}

%%%% temprarilu mark in color
\def\A{{\color{blue}\mathsf a}}

\def\C{{\color{blue} \mathsf c}}

\def\vvx{{\vv x}}
\def\vvc{{\vv c}}
\def\vvu{{\vv u}}

\usepackage{graphicx,fancybox,pgf,amsmath,float,amssymb}
\usepackage{fancybox,graphicx,dsfont,pgf}
\usepackage{subfigure,color,wrapfig}

\usepackage{latexsym}

%%% expected values, probabilities

\renewcommand{\proba}{\mathds{P}}

\newcommand{\erfc}{{\rm erfc}}

\def\<{\langle}
\def\>{\rangle}

\numberwithin{equation}{section}

%%%  hide markup by uncommenting these lines
%\renewcommand{\hl}[1]{#1}
%\renewcommand{\st}[1]{}
\def\eps{\varepsilon}
%%%%%%%%%%%%%%%% arxiv macro
% \usepackage{framed,pgf}
%\newcounter{oldeq}
%\newcounter{usesofarxiv}
% \newcommand{\arxivc}[1]{
%\setcounter{oldeq}{\value{equation}}
% \addtocounter{usesofarxiv}{1}
% \setcounter{equation}{0}
%\def\theoldeq{\theequation}
%\def\theequation{x-\arabic{usesofarxiv}.\arabic{equation}}
%\def\theequation{\arabic{section}.\arabic{usesofarxiv}.\arabic{equation}}
%\def\theequation{\thesection.\arabic{usesofarxiv}.\arabic{equation}}
%% \begin{ana}
%  %\footnotesize
%  \colorlet{shadecolor}{gray!10}
%{%\footnotesize
%\begin{shaded} \hrule
%%\tiny
%#1  \normalsize \hrule
%\end{shaded}
%   \setcounter{equation}{\value{oldeq}}
%\numberwithin{equation}{section}
%}}
%%% simpler version
 \newcommand{\arxivc}[1]{ \hrule
 \tiny #1  \normalsize \hrule
 }
%%%%%%%%%%%%%%%%% end of arxiv macro

%%%%%%%%%%%%%%%%%% hide comments %%%%%%%%%%%%
\renewcommand{\comment}[1]{}
\renewcommand{\longcomment}[1]{}
\renewcommand{\arxivc}[1]{}
%%%%%%%%%%%%%%%%%% front matter %%%%%%%%%%%%%%%%%
   %%%%%%%%%%%%%%%%%%%%%%%%%%%%%%%%%%% comment - uncomment as needed - %%%%%%%%%%%%%%%%%%
%%%%%%%%%        show labels
%\usepackage[notref,notcite]{showkeys}
%\usepackage{showkeys}
%%%%%%%%%%%       show labels and which aren't used
%\usepackage{refcheck}
%%%%%%%%%%%%%%%

\begin{document}\sloppy
%\begin{frontmatter}
%\title{From the asymmetric simple exclusion processes \\ to the stationary measures of the KPZ fixed point on an interval}
%\runtitle{ASEP and KPZ fixed point}
%\begin{aug}
%\author[A]{\fnms{W\l odek } \snm{Bryc}\ead[label=e1,mark]{wlodek.bryc@gmail.com}},
%\author[A]{\fnms{Yizao} \snm{Wang}\ead[label=e2,mark]{yizao.wang@uc.edu}}
%\and
%\author[C]{\fnms{Jacek} \snm{Weso\l owski}\ead[label=e3,mark]{wesolo@mini.pw.edu.pl}}
%\address[A]{Department of Mathematical Sciences,
%University of Cincinnati,
%2815 Commons Way\,
%Cincinnati, OH, 45221-0025, USA.
%}
%
%\address[C]{ Faculty of Mathematics and Information Science
%Warsaw University of Technology and Statistics Poland,
%Warszawa, Poland.
%\printead{e1,e2,e3}}
%\end{aug}

\title[ASEP and KPZ fixed point]{From the asymmetric simple exclusion processes \\ to the stationary measures of the KPZ fixed point on an interval}

\author{W\l odek Bryc}
\address
{
W\l odzimierz Bryc\\
Department of Mathematical Sciences\\
University of Cincinnati\\
2815 Commons Way\\
Cincinnati, OH, 45221-0025, USA.
}
\email{wlodek.bryc@gmail.com}

\author{Yizao Wang}
\address
{
Yizao Wang\\
Department of Mathematical Sciences\\
University of Cincinnati\\
2815 Commons Way\\
Cincinnati, OH, 45221-0025, USA.
}
\email{yizao.wang@uc.edu}
\author{Jacek Weso\l owski}
\address{%
Jacek Weso\l owski\\
 Faculty of Mathematics and Information Science
Warsaw University of Technology and Statistics Poland,
Warszawa, Poland}
\email{wesolo@mini.pw.edu.pl}

\begin{abstract}
Barraquand and Le~Doussal \cite{barraquand2022steady} introduced a family of stationary measures  for the (conjectural) KPZ fixed point on an interval with Neumann boundary conditions,
and predicted that they  arise  as  scaling limit of stationary measures of all models in the KPZ %
universality
class on an interval.
In this paper, we show that the stationary measures for KPZ fixed point on an interval %
arise as %
the scaling limits of the %
height increment processes for
the open asymmetric simple exclusion process %
in the steady state, with parameters changing appropriately as the size of the system tends to infinity. %
\end{abstract}

%
%\begin{keyword}[class=MSC]
%\kwd[Primary ]{60K35}
%\kwd{60F05}
%\kwd[; secondary ]{82C22}
%\end{keyword}
%
%\begin{keyword}
%\kwd{asymmetric simple exclusion process}
%\kwd{scaling limit}
%\kwd{KPZ fixed point on an interval}
%\end{keyword}

%\end{frontmatter}

\maketitle
\arxivc{This is an expanded version of the paper with additional details and calculations.}

\section{Introduction and main result} %
\subsection{KPZ fixed point on an interval} \label{Sec:KPZ}

The asymmetric simple exclusion process (ASEP)  in one dimension is one of the most widely investigated models for open non-equilibrium systems in the physics literature  and serves as a basic model in the Kardar--Parisi--Zhang (KPZ)    universality class.
In particular,
investigations on the connection between the ASEP and  %
the so-called
 KPZ fixed point,  %
   the conjectural \cite{corwin2015renormalization} limiting space-time random field for the KPZ universality class which was rigorously defined on the real line by Matetski, Quastel and Remenik \cite{matetski2016kpz},
   have been among the most active areas in mathematical physics in recent decades.
 While most activities focus on the KPZ equation and KPZ fixed point on the real line (see e.g. \cite{calvert2019brownian,dauvergne2021directed,pimentel2021brownian,quastel2021convergence,quastel2017totally,Sarkar-Virag-2021,virag2020heat} and more references therein), recent progress has been made regarding the models defined on an interval instead of the real line, with appropriate boundary conditions. The investigations of an open ASEP on an interval (in a weakly asymmetric regime)  turned out to be an effective tool to study the open KPZ equation (\cite{corwin2018open,parekh2019kpz}), which then lead Corwin and Knizel   \cite{corwin21stationary} to the construction and characterization of the stationary measures of the KPZ on $[0,1]$
 (see also \cite{barraquand2022steady} and \cite{bryc21markov}).
     We refer to the references therein and to the review \cite{corwin2022some} for more background  on the topic.

The KPZ fixed point
on an interval has not yet been rigorously defined. However,
    building on the %
work of Corwin and Knizel \cite{corwin21stationary},
Barraquand and Le Doussal \cite{barraquand2022steady}
determined the large scale limit of the stationary measures of the KPZ equation under the appropriate  rescaling, and postulated that this limit  should correspond to the stationary measures of the (conjectural) KPZ fixed point on $[0,1]$ that are expected to arise as the scaling limit of
stationary measures of all models in the KPZ class on an
interval. The postulated stationary measures of the KPZ fixed point on $[0,1]$ depend on two boundary parameters $\A,\C$, %
and can be represented
as the laws of the processes
 \begin{equation}
    \label{theirH}
    \ccbb{  \wt H(x)}_{x\in[0,1]} = \ccbb{\wt B_x + \wt X_x}_{x\in[0,1]},
  \end{equation}
where $\widetilde B$ is  the standard Brownian motion multiplied by $1/\sqrt 2$ and $\widetilde X$  is an independent stochastic process with continuous trajectories.
   The  law
 $\mathbb{P}_{\wt X}$ of $\wt X$ is absolutely continuous  with respect to the law $\mathbb{P}_{\wt B}$ of process $\wt B$  with
the  Radon--Nikodym derivative $\frac{d
\proba_{\wt X}
}{d \mathbb{P}_{\wt B}}(\beta)$  proportional to
\begin{equation}\label{BD-RN}
e^{\C \min_x \beta_x+\A \min_x (\beta_x-\beta_1)}, \quad \beta = (\beta_x)_{x\in[0,1]} \in C([0,1]).
\end{equation}
 The finite dimensional distributions for $\wt X$  and their Laplace transforms were given in  %
    \cite[Supplementary material, formulas (53) and (55)]{barraquand2022steady},
  see also \cite{Bryc-Kuznetsov-2021}.
(We note change of notation here: the boundary parameters in   \cite{barraquand2022steady} are $\wt v=\A/2$  and $\wt u=\C/2$.)

  The recent advances \cite{barraquand2022steady,corwin21stationary}
leading to the process \eqref{theirH} can be interpreted as a double-limit procedure:
one first takes the limit of the increments of the height function of an ASEP in a weakly asymmetric regime as the size of the system tends to infinity (leading to stationary measure for open KPZ equation), and then scales both the boundary parameters and the magnitudes appropriately to obtain process $\wt H$. %

The contribution of this paper is a limit theorem for the open ASEP that leads to the aforementioned process \eqref{theirH}   as a single limit  when the boundary parameters change.
  The single-limit procedure seems to be of a different nature than the double-limit procedure described above. In particular the parameter $q$ is fixed. It is remarkable that $q$ does not appear in the limit process, and moreover the appearance of the process $\wt H$
  is actually a surprise to us, and we do not have a simple explanation on why the two procedures lead to the same limit process.
More specifically, in Theorem \ref{thm:1} we show that
under appropriate scaling
the
spatial height increment
process %
converges in
 finite-dimensional
distributions to the sum of two independent processes
\begin{equation}
  \label{ourH}  \frac1{\sqrt 2}\ccbb{\BB_x + \eta_x\topp{\A,\C}}_{x\in[0,1]},
\end{equation}
where   $\BB$ is the standard Brownian motion, and process $\eta\topp{\A,\C}$ is introduced in Section \ref{sec:eta}. %
 When    the sum $\A+\C$ is finite and non-negative,  convergence is in Skorokhod's space $D[0,1]$ of %
c\`adl\`ag
 functions
 and
 process \eqref{ourH} has the same law as process
 \eqref{theirH}, see Remark \ref{Rem:KPZ}.
 Our proof relies on a different representation of the limiting process, not on the Radon--Nikodym derivative \eqref{BD-RN}.  %
 Our limit theorem has actually a larger family of processes in the limit, allowing that $\A$ and/or $\C$ are infinite.

\normalsize

\subsection{Open ASEP with changing parameters}\label{sec:OASEP}
The asymmetric simple exclusion process %
is
an irreducible  continuous time
 Markov process on
the finite state space $\{0,1\}^n$ with parameters
\begin{equation}\label{eq:abcd}
\alpha>0,\quad\beta>0,\quad \gamma\geq 0,\quad \delta\geq 0,\qmand 0\leq q<1.
\end{equation}
Informally, the
process models the evolution of the particles located at sites $1,\dots,n$ that can jump to the neighbor cell to the right with rate 1 and to the left
with rate $q$, if the target site is unoccupied. Furthermore,  particles arrive at site $1$ from the left reservoir  (respectively, at site $n$ from the right reservoir),  if empty, at rate $\alpha$  (respectively, $\delta$), and
exit the system into the right reservoir at site $n$ (respectively, exit the system into the left reservoir at site $1$), if occupied, at rate $\beta$ (respectively, $\gamma$).  The transition %
rates
are summarized in Figure \ref{Fig1}.
  Since   $q<1$,
particles move in an asymmetric way,
  with higher rate to the right than to the left;
  in the special case $q = 0$,
particles move only to the right and
the model is known as the totally asymmetric simple exclusion process.

We let $\tau_1(t),\dots,\tau_n(t)$ denote the occupations of the sites: $\tau_j(t) = 1$ if the $j$-th location is occupied by a particle at time $t\geq 0$, and $\tau_j(t) = 0$ otherwise.
The height function is defined for $x\in[0,1]$ and $t\geq 0$ as
\begin{equation*}
  \label{h(x,t)}
  h_n(x,t)=h_n(0,t)+\sum_{j=1}^{\floor {n x}}(2\tau_j(t)-1), \quad h_n(0,t)=-2N_n(t),
\end{equation*}
with $N_n (t)$ denoting the net flow of particles into the system from the left reservoir up to time $t$, i.e, total number of particles that arrived at
site 1 from the left reservoir up to time $t$ minus the number of particles that have exited from
site 1 into the left reservoir up to time $t$. %
This   height function is piece-wise constant in variable $x$, which is more convenient for %
our approach than the
continuous height
function obtained  by the piece-wise linear interpolation between the jumps. %

We  denote by $\mu_n$ the stationary distribution of the ASEP as a Markov process  on $\{0,1\}^n$.
Under $\mu_n$, the process is also referred to be {\em in the steady state}
in the physics literature.  Following the common notation in the physics literature, we will denote the expectation with respect to $\mu_n$ by $\langle\cdot \rangle_n$.

Started in the steady state, i.e.,  with  $\mu_n$ as  the initial distribution of $(\tau_1(0),\dots,\tau_n(0))$, the law of the %
height increment process
$\ccbb{h_n(x,t)-h_n(0,t)}_{x\in[0,1]}$ does not %
change with
time  $t\geq 0$.  We can therefore omit the dependence on $t$ and consider
a single instance $(\tau_1,\dots,\tau_n)\in\{0,1\}^n$ as a random variable with the law $\mu_n$.
Our main object of interest is then %
 the {\em  stationary measure for the open ASEP height function process} %
   (see \cite{corwin21stationary}),
\begin{equation}\label{eq:h}
h_n(x) := \summ j1{\floor{nx}}\pp{2\tau_j-1}, \quad x\in[0,1].
\end{equation}

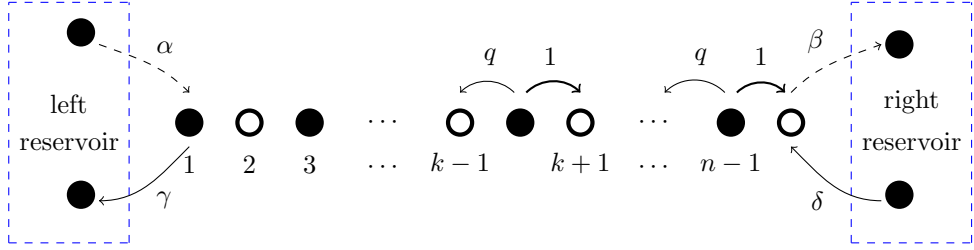
\begin{figure}[tb]
  \begin{tikzpicture}[scale=.8]
\draw [fill=black, ultra thick] (.5,1) circle [radius=0.2];
  \draw [ultra thick] (1.5,1) circle [radius=0.2];
\draw [fill=black, ultra thick] (2.5,1) circle [radius=0.2];
  \draw [ultra thick] (5,1) circle [radius=0.2];
   \draw [fill=black, ultra thick] (6,1) circle [radius=0.2];

   \node [above] at (-1.5,1.) {left};
   \node [below] at (-1.5,1.) {reservoir};
      \draw[-,dashed,blue] (-2.5,-1) to (-2.5,3);
    \draw[-,dashed,blue] (-0.5,-1) to (-0.5,3);
        \draw[-,dashed,blue] (-2.5,3) to (-0.5,3);
             \draw[-,dashed,blue] (-2.5,-1) to (-0.5,-1);
      \node [above] at (12.5,1.) {right };
       \node [below] at (12.5,1.) {reservoir};
      \draw[-,dashed,blue] (11.5,-1) to (11.5,3);
       \draw[-,dashed,blue] (11.5,3) to (13.5,3);
       \draw[-,dashed,blue] (11.5,-1) to (13.5,-1);
         \draw[-,dashed,blue] (13.5,-1) to (13.5,3);
    \draw [ultra thick] (7,1) circle [radius=0.2];
      \draw [fill=black, ultra thick] (9.5,1) circle [radius=0.2];
   \draw [ultra thick] (10.5,1) circle [radius=0.2];
     \draw[->,dashed] (-1,2.3) to [out=-20,in=135] (.5,1.5);
   \node [above right] at (-.2,2) {$\alpha$};
   \draw [fill=black, ultra thick] (-1.3,2.5) circle [radius=0.2];
     \draw[->,dashed] (10.5,1.5) to [out=45,in=200] (12,2.3);
     \node [above left] at (11.2,2) {$\beta$};
           \draw [fill=black, ultra thick] (12.3,2.3) circle [radius=0.2];
            \node  at (8.25,1) {$\cdots$};  \node  at (3.75,1) {$\cdots$};
      \node [above] at (6.5,1.8) {$1$};
      \draw[->,thick] (6.1,1.5) to [out=45,in=135] (7,1.5);
        \node [above] at (5.5,1.8) {$q$};
            \draw[<-] (5,1.5) to [out=45,in=135] (5.9,1.5);
                 \node [above] at (10,1.8) {$1$};
                \draw[->,thick] (9.6,1.5) to [out=45,in=135] (10.4,1.5);
               \node [above] at (9,1.8) {$q$};
                 \draw[<-] (8.4,1.5) to [out=45,in=135] (9.4,1.5);
       \draw[<-] (-1,-.3) to [out=0,in=-135] (.5,0.6);
   \node [below right] at (-.2,0) {$\gamma$};
     \draw [fill=black, ultra thick] (-1.3,-.2) circle [radius=0.2];
    \node [above] at (0.5,0) {$1$};
    \node [above] at (1.5,0) {$2$};
   \node [above] at (2.5,0) {$3$};
     \node [above] at (3.75,0) {$\cdots$};
   \node [above] at (5,0) {$k-1$};
       \node [above] at (7,0) {$k+1$};
        \node [above] at (8.25,0) {$\cdots$};
          \node [above] at (9.5,0) {$n-1$};
        \draw[<-] (10.5,.6) to [out=-45,in=180] (12,-.3);
   \node [below left] at (11.2,0) {$\delta$};
      \draw [fill=black, ultra thick] (12.3,-.2) circle [radius=0.2];
\end{tikzpicture}
\caption{Transition rates of the asymmetric simple exclusion process with open boundaries, with parameters $\alpha,\beta,\gamma,\delta, q$.
The black disks represent occupied sites. The white disks represent empty sites, which represent the "holes" in the discussion of the particle-hole duality.
}\label{Fig1}
\end{figure}

We shall consider the case when the parameters $\alpha,\beta,\gamma,\delta$ vary with  $n$, while we shall %
keep  $q\in[0,1)$ fixed.
  Consequently, $\mu_n$ denotes the stationary distribution of the ASEP with varying parameters $\alpha_n$, $\beta_n$, $\gamma_n$, $\delta_n$ and $q\in[0,1)$ fixed.   As in \cite{bryc19limit,bryc17asymmetric},
it is convenient to reparametrize the ASEP using
\begin{equation}\label{eq:ABCD}
A_n = \kappa_+(\beta_n,\delta_n),\; B_n = \kappa_-(\beta_n,\delta_n),\; C_n = \kappa_+(\alpha_n,\gamma_n),\; D_n = \kappa_-(\alpha_n,\gamma_n).
\end{equation}
with
\[
\kappa_{\pm}(u,v):=\frac1{2u}\pp{1-q-u+v\pm \sqrt{(1-q-u+v)^2+4uv}}.
\]
In particular $A_n,C_n\ge 0$, and %
$B_n,D_n\in(-1,0]$ as explained in \cite{bryc17asymmetric}.

We shall specify how the parameters $\alpha_n,\beta_n,\gamma_n,\delta_n$ of the ASEP vary by specifying how the parameters $A_n,B_n,C_n,D_n$ vary.
We will be interested in convergence to the "triple point" in the phase diagram, $A_n\to 1$ and $C_n\to 1$. %
  This   point   lies
 at the intersection of three  regions of the phase diagram  for the open ASEP, where the high density, low density and the maximal current regimes meet, see  Fig. \ref{Fig3}.
For parameters $B_n,D_n$ we shall consider  more general limits, with controlled rates of convergence only when $B_n\to -1, D_n\to -1$.
The key assumption on the rates of convergence is as follows.
\begin{assumption}\label{assump:0}
In addition to the   assumptions that $A_n,C_n\ge 0$, $B_n,D_n\in(-1,0]$ for all $n\geq 1$, which are implied by \eqref{eq:ABCD},  we
assume that %
$A_nC_n<1$, and
\begin{align}
\limn A_n = 1 & \mbox{ with }  \limn \sqrt n(1-A_n) = \A\in
(-\infty,\infty], \label{limAn}
\\
\limn C_n = 1& \mbox{ with }  \limn \sqrt n(1-C_n) = \C\in (-\infty,\infty], \label{limCn}
\end{align}
where $\A+\C\ge 0$.
We also assume that
\begin{equation}
    \label{limBnDn}
    \limn B_n=B\in[-1,0]  \qmand   \limn D_n=D\in[-1,0],
\end{equation}
and
\begin{equation}\label{bd:BnDn}
\limn \frac1n\log (1-B_nD_n)= 0.
\end{equation}

\end{assumption}

 We use the convention that  $\A+\C> 0$   includes the cases $\A=\infty$ and/or $\C = \infty$.

We remark that \eqref{bd:BnDn} holds if  \eqref{limBnDn} holds with $BD<1$,
   or if $BD=1$ but
the speed of convergence in one of the limits, say to $B=-1$, is restricted by a  condition such as $n^\theta (1+B_n)\to \infty$ for some $\theta> 0$.

\arxivc{ Indeed, for large enough $n$ we then have
$1+B_n\geq n^{-\theta}$ so with $D_n>-1$ and $B_n\leq 0$ we get
\[
  0\geq \tfrac1n\log (1-B_nD_n)\geq \tfrac1n\log (1+B_n)
\geq -\tfrac{\theta}n\log n\to 0.
\]

}

 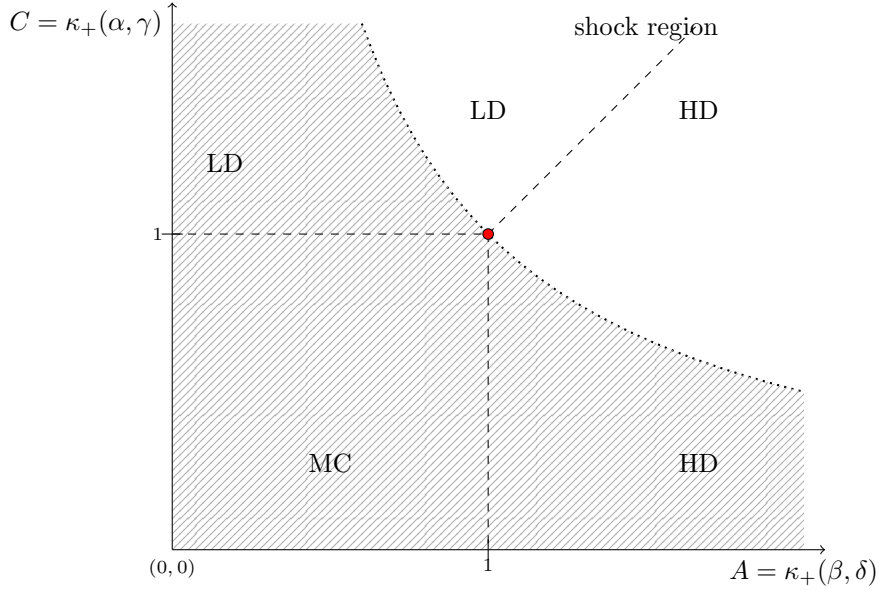
\begin{figure}[tb]

  \begin{tikzpicture}[scale=1.4]

\draw[scale = 1,domain=6.8:11,smooth,variable=\x,dotted,thick] plot ({\x},{1/((\x-7)*1/3+2/3)*3+5});

\fill[pattern=north east lines, pattern color=gray!60] (5,5)--(5,10) -- plot [domain=6.8:11]  ({\x},{1/((\x-7)*1/3+2/3)*3+5}) -- (11,5) -- cycle;

 \draw[->] (5,5) to (5,10.2);
 \draw[->] (5.,5) to (11.2,5);
   \draw[-, dashed] (5,8) to (8,8);
   \draw[-, dashed] (8,8) to (8,5);
   \draw[-, dashed] (8,8) to (10,10);
   \node [left] at (5,8) {\scriptsize$1$};
   \node[below] at (8,5) {\scriptsize $1$};
     \node [below] at (11,5) {$A = \kappa_+(\beta,\delta)$};
   \node [left] at (5,10) {$C = \kappa_+(\alpha,\gamma)$};

\node[above] at (9.5,9.75) {shock region};

  \draw[-] (8,4.9) to (8,5.1);
   \draw[-] (4.9,8) to (5.1,8);

 \node [below] at (5,5) {\scriptsize$(0,0)$};
    \node [above] at (5.5,8.5) {LD};
    \node[above] at (8,9) {LD};
    \node [below] at (10,6) {HD}; %
      \node [above] at (10,9) {HD};

 \node [below] at (6.5,6) {MC};%

\draw [fill=red] (8,8) circle [radius=0.05];
\end{tikzpicture}
\caption{
Phase        diagram for the open ASEP with maximal current (MC), low density (LD), high density (HD) regions, and with shaded  fan region $AC<1$.
Our parameters converge from within the shaded fan region to the triple point (1,1).
}\label{Fig3}
\end{figure}

\subsection{Main result}\label{sec:eta}
Let
\begin{equation}\label{eq:g}
\g_t(x,y) := \frac1{\sqrt {2\pi t}}\bb{\exp\pp{-\frac 1{2t}(x-y)^2} - \exp\pp{-\frac 1{2t}(x+y)^2}}, \quad x,y,t>0,
\end{equation}
denote the transition kernel of the Brownian motion killed at hitting zero. Introduce also
\begin{equation}\label{eq:ell}
\ell_x(y) := \frac{y}{\sqrt{2\pi x^3}}\exp\pp{-\frac{y^2}{2x}}, \quad x,y>0.
\end{equation}
We first recall two classical stochastic processes.
The {\em Brownian excursion}, denoted by $\BB^{\rm ex}$, is the process with  $\BB^{\rm ex}_0= \BB^{\rm ex}_1=0$ and with  finite-dimensional density at time points $0<x_1<\ldots<x_{d-1}<1$ of the form
\begin{equation}\label{eq:pdf_Bex}
\sqrt {8\pi} \,
\ell_{x_1}(y_1)\,\ell_{1-x_{d-1}}(y_{d-1})\,\prodd k2{d-1}\g_{x_{k}-x_{k-1}}(y_{k-1},y_k),\quad y_1,\dots,y_{d-1}>0.
\end{equation}
The {\em Brownian meander}, denoted by $\BB^{\rm me}$, is the process with $\BB^{\rm me}_0=0$ and with finite-dimensional density at time points $0<x_1<\ldots<x_{d-1}<x_d=1$ of the form
\begin{equation}\label{eq:pdf_Bme}
\sqrt{2\pi}
\ell_{x_1}(y_1)\prodd k2d\g_{x_k-x_{k-1}}(y_{k-1},y_k),\quad y_1,\dots,y_d>0.
\end{equation}
Next, we introduce  an auxiliary Markov process %
$\widetilde\eta\topp{\A,\C}=\ccbb{\widetilde\eta\topp{\A,\C}_x}_{x\in[0,1]}$
 parameterized by $\A,\C$ with $\A+\C\geq 0$.
\begin{definition} %
\begin{enumerate}[(i)]
\item The process $\widetilde \eta\topp{\infty,\infty}$ is %
the Brownian excursion $\BB^{\rm ex}$.
\item For $\A\in\RR$, the process $\widetilde\eta\topp{\A,\infty}$ is the process with  finite-dimensional density  at time points $0<x_1<\cdots<x_{d-1}<x_d = 1$ of the form
\begin{equation}
  \label{eq:proces-Ainfty}
   \frac{1}{\mathfrak C_{\A,\infty}}\,\ell_{x_1}(y_1)\,e^{-\A y_d/\sqrt 2}\,\prodd k2{d}\g_{x_k-x_{k-1}}(y_{k-1},y_k),\quad  y_1,\dots,y_d>0,
\end{equation}
where the normalization constant $\mathfrak C_{\A,\infty}$ is given in  \eqref{eq:C_ac} below.  We set $\widetilde\eta\topp{\A,\infty}_0=0$.
In particular, $\wt \eta\topp{0,\infty}$ is the Brownian meander.
\item For $\C\in\RR$, the process $\widetilde\eta\topp{\infty,\C}$ is defined as the process with finite-dimensional density  at time points $0=x_0<x_1<\cdots<x_{d-1}< 1$
 of the form
\[
 \frac{1}{\mathfrak C_{\infty,\C}}\,\ell_{1-x_{d-1}}(y_{d-1})\,e^{-\C y_0/\sqrt 2}\,\prodd k1{d-1}\g_{x_k-x_{k-1}}(y_{k-1},y_k),\quad  y_0,\dots,y_{d-1}>0,
\]
where the normalization constant  $\mathfrak C_{\infty,\C}=\mathfrak C_{\C,\infty}$ is given in \eqref{eq:C_ac} below. We set $\widetilde\eta\topp{\infty,\C}_1=0$.
\item For %
$\A+\C\in(0,\infty)$,
 the process $\widetilde\eta\topp{\A,\C}$   is defined as the process with finite-dimensional density  at time points $0=x_0<x_1<\cdots<x_d=1$ of the form
 \begin{equation}\label{eq:eta}
\wt p\topp{\A,\C}_{x_0,\dots,x_d}(y_0,\dots,y_d):= \frac1{\mathfrak C_{\A,\C}} e^{-(\C y_0+\A y_d)/{\sqrt 2}}\,\prodd k1d \g_{x_k-x_{k-1}}(y_{k-1},y_k),\quad y_0,\dots,y_d>0,
 \end{equation}
 with the expression of $\mathfrak C_{\A,\C}$ in \eqref{eq:C_ac} below.
(We note that for %
$\A+\C\in(0,\infty)$,
 process $\tfrac1{\sqrt{2}}\widetilde\eta\topp{\A,\C}$ appeared  in   \cite[Theorem 2.1]{Bryc-Kuznetsov-2021}.)
\end{enumerate}
\end{definition}

\begin{remark}
The normalization constant $\mathfrak C_{\A,\C}$ is
 \begin{equation*}
 \mathfrak C_{\A,\C}=\begin{cases}
  \int_{(0,\infty)^2} e^{-\tfrac{\C x+\A y}{\sqrt 2}}\,\g_1(x,y)\;dx\,dy, &  \mbox{ if }   \A+\C>0, \; \A,\C\in\RR,\\ \\
   \int_{(0,\infty)}\ell_1(y)e^{-\tfrac{\A y}{\sqrt 2}}\,dy, &  \mbox{ if }  \A\in\RR,\; \C=\infty,\\
 \end{cases}
 \end{equation*}
 which gives
 \begin{equation}\label{eq:C_ac}
\mathfrak C_{\A,\C}
= \begin{cases}
\displaystyle \sqrt 2\cdot \frac{\A H(\A/2)-\C H(\C/2)}{\A^2-\C^2},& \mbox{ if } \A\ne\C, \; \A+\C>0,\\\\
\displaystyle \frac{2+\A^2 }{2\sqrt 2\A}\cdot H(\A/2) - \frac1{\sqrt{2\pi}}, & \mbox{ if } \A = \C>0,
\\ \\
\displaystyle
\frac{1}{\sqrt{2\pi }}-\frac{\A H(\A/2)}{2\sqrt{2}},&  \mbox{ if } \A\in\RR,\; \C=\infty,
\\ \\
\displaystyle
\frac{1}{\sqrt{2\pi }}-\frac{\C H(\C/2)}{2\sqrt{2}},&  \mbox{ if } \A=\infty,\; \C\in\RR,
\end{cases}
\end{equation}
where for $x\in\RR$,
\begin{equation}\label{eq:H}
H(x) =e^{x^2}\erfc(x) \qmwith \erfc (x) = \frac 2{\sqrt\pi}\int_x^\infty e^{-t^2}dt.
\end{equation}
The integrals for the normalization constant can be computed with symbolic software, see also Lemma \ref{lem:C_ac}.
\end{remark}

Since $\g_t(x,y)=\g_t(y,x)$, from the form of the joint densities \eqref{eq:pdf_Bex}, \eqref{eq:pdf_Bme},
\eqref{eq:proces-Ainfty} and \eqref{eq:eta} we see that for all $\A+\C>0$, including the cases when one or both parameters are $\infty$, we have
\begin{equation}\label{eq:time-rev}
  \ccbb{\wt \eta_x\topp{\C,\A}}_{x\in[0,1]}\eqd \ccbb{\wt\eta\topp{\A,\C}_{1-x}}_{x\in[0,1]}.
\end{equation}
\arxivc{
The key is to notice that with the endpoints $x_0,x_d$ appropriately chosen for each of the four cases, the joint density satisfies
\begin{align*}
\wt p\topp{\A,\C}_{1-x_0,1-x_{d-1},\dots,1-x_1,1-x_d}(y_0,\dots,y_d)  & = \wt p\topp{\C,\A}_{x_0,x_1,\dots,x_{d-1},x_d}(y_d,\dots,y_0).
\end{align*}
For example, if $\A+\C>0$ then the joint probability density function of $\wt\eta\topp{\A,\C}$ in \eqref{eq:eta} satisfies
\begin{align*}
\wt p\topp{\A,\C}_{0,1-x_{d-1},\dots,1-x_1,1}(y_0,\dots,y_d)  & =
\frac1{\mathfrak C_{\A,\C}}e^{-\C y_0/\sqrt 2-\A y_d/\sqrt 2}\prodd k1d g_{x_k-x_{k-1}}(y_{d-k},y_{d-k+1}) \\
& = \wt p\topp{\C,\A}_{0,x_1,\dots,x_{d-1},1}(y_d,\dots,y_0).
\end{align*}
}
We can now introduce the limit stochastic processes $\eta\topp{\A,\C} = \{\eta\topp{\A,\C}_x\}_{x\in[0,1]}$ that shall arise in the scaling limit of
the stationary measure height function process
 $h_n$ in addition to a Brownian motion component.
For $\A+\C>0$, %
 we define the process $\eta\topp{\A,\C}$  as
\begin{equation}
  \label{eta:def}
  \eta\topp{\A,\C}_x:= \wt\eta\topp{\A,\C}_x - \wt\eta\topp{\A,\C}_0,\; x\in[0,1].
\end{equation}

For $\A+\C=0$, we define the process $\eta\topp{\A,-\A}$, $\A\in\R$, as
\begin{equation}\label{eq:A -A}
    \eta_x\topp{\A,-\A}:=\BB_x-\frac{\A }{\sqrt 2}x ,\; x\in[0,1].
\end{equation}

 From  definition \eqref{eq:A -A} for the case $\A+\C=0$ and from  definition
 \eqref{eta:def} combined with \eqref{eq:time-rev} for the case $\A+\C>0$, we get the following.
\begin{remark} \label{P:time-rev} For all $\A+\C\geq 0$, we have
\begin{equation}\label{eq:reversal}
\ccbb{\eta\topp{\A,\C}_x}_{x\in[0,1]}\eqd\ccbb{\eta\topp{\C,\A}_{1-x}-\eta\topp{\C,\A}_1}_{x\in[0,1]}.
\end{equation}
\end{remark}
\arxivc{ To verify \eqref{eq:reversal} when  $\C=-\A$ write,
$$\eta_{1-x}\topp{-\A,\A}-\eta_1\topp{-\A,\A}=\BB_{1-x}+\A(1-x)/2-(\BB_1+\A/2)
=(\BB_{1-x}-\BB_1)-\A x/2.$$
Since $(\BB_{1-x}-\BB_1)_{x\in[0,1]}\eqd (B_x)_{x\in[0,1]}$, the result follows.
}

In  Theorem \ref{thm:1}, which is our main result, we
establish
convergence in $D[0,1]$ for
  fluctuations of the height function  of the open ASEP in the steady state
 to the processes   predicted in \cite{barraquand2022steady}, including a slightly broader class of limits under convergence  of the finite-dimensional distributions.

 \begin{theorem}\label{thm:1}
 Under Assumption \ref{assump:0} and under the stationary distribution $\mu_n$
with $h_n$ as in \eqref{eq:h}, if $\A,\C$ are finite we have
\[
 \frac1{\sqrt n}\ccbb{h_n(x)}_{x\in[0,1]}\weakto \frac1{\sqrt 2}\ccbb{\BB_x + \eta\topp{\A,\C}_x}_{x\in[0,1]} \mbox{ as $n\to\infty$}
 \]
  as  processes in the  space $D[0,1]$ of  c\`adl\`ag functions with the Skorokhod metric and the limit process has continuous trajectories.
  Here
 $\BB$ is a standard Brownian motion, $\eta\topp{\A,\C}$ is introduced above, and the two processes are independent.

 When $\infty$ is allowed as a value for  $\A$ and/or $\C$, we still have convergence of the finite-dimensional distributions,
\begin{equation} \label{eq:MC}
\frac1{\sqrt n}\ccbb{h_n(x)}_{x\in[0,1]}\fddto \frac1{\sqrt 2}\ccbb{\BB_x + \eta\topp{\A,\C}_x}_{x\in[0,1]} \mbox{ as $n\to\infty$}.
\end{equation}

\end{theorem}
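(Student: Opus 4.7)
The plan is to exploit the Askey--Wilson (matrix product) representation of $\mu_n$ in the fan region $A_nC_n<1$, developed in the authors' earlier work on open ASEP. This yields an explicit formula for the joint Laplace transform of the rescaled increments $n^{-1/2}(h_n(x_j)-h_n(x_{j-1}))$ at test points $0=x_0<x_1<\dots<x_d=1$, as an expectation against an Askey--Wilson Markov chain $(Y_i)_{i=1}^n$ with parameters $A_n,B_n,C_n,D_n,q$. The target process $\tfrac{1}{\sqrt 2}(\BB+\eta^{(\A,\C)})$ has a joint density that (by \eqref{eq:eta}) is a product of killed-Brownian transitions $\g_{x_k-x_{k-1}}$ with boundary exponential biases $e^{-(\C y_0+\A y_d)/\sqrt 2}$, convolved with Gaussian kernels coming from the $\BB$-component. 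The goal is to match the two.

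For convergence of the finite-dimensional distributions, I would split the height-increment process into a bulk part and a boundary part. Inside each block $I_j=(\lfloor nx_{j-1}\rfloor,\lfloor nx_j\rfloor]$ of length $\Theta(n)$, a conditional central limit theorem for $(2\tau_i-1)_{i\in I_j}$ given the Askey--Wilson values $Y_{\lfloor nx_{j-1}\rfloor}$ and $Y_{\lfloor nx_j\rfloor}$ at the block endpoints produces, after the $n^{-1/2}$ normalization, independent Gaussian increments that assemble into the Brownian piece $\tfrac{1}{\sqrt 2}\BB$. The remaining low-frequency part is carried by the rescaled boundary values $V_j:=\sqrt n\,(1-Y_{\lfloor nx_j\rfloor})$; under Assumption \ref{assump:0} the chain $(V_j)$ admits a diffusion approximation on $(0,\infty)$ whose transition density is $\g_{x_k-x_{k-1}}$ with endpoint biases $e^{-\C y_0/\sqrt 2}$ and $e^{-\A y_d/\sqrt 2}$ reflecting $\sqrt n(1-C_n)\to\C$ and $\sqrt n(1-A_n)\to\A$ (up to the global $1/\sqrt 2$ factor). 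Comparison with \eqref{eq:eta} identifies this boundary limit as $\tfrac{1}{\sqrt 2}\wt\eta^{(\A,\C)}$; the normalization $h_n(0)=0$ forces the subtraction in \eqref{eta:def} and produces $\tfrac{1}{\sqrt 2}\eta^{(\A,\C)}$. The degenerate cases $\A=\infty$ or $\C=\infty$ are handled by observing that the corresponding exponential bias collapses to a point mass at $0$, replacing $\g$ at that endpoint by the hitting density $\ell_x(y)$ and thereby recovering \eqref{eq:proces-Ainfty} together with the Brownian meander and excursion cases.

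For tightness in $D[0,1]$ when $\A,\C$ are finite, I would verify Billingsley's moment criterion, e.g.\ $\langle(h_n(x)-h_n(y))^4\rangle_n\le Cn^2(x-y)^2$, using fourth-moment formulas derivable from the same Askey--Wilson representation together with uniform control on the two-point correlations $\langle\tau_i\tau_j\rangle_n-\langle\tau_i\rangle_n\langle\tau_j\rangle_n$. Since both $\BB$ and $\eta^{(\A,\C)}$ have continuous trajectories, finite-dimensional convergence plus tightness upgrades to the Skorokhod statement, and the reversal symmetry \eqref{eq:reversal} can be used to reduce the analysis at one endpoint to that at the other.

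The main obstacle is the uniform asymptotic analysis of the Askey--Wilson measures at the triple point $A_n,C_n\to 1$ under \eqref{limAn}--\eqref{bd:BnDn}: both $q$ and the potentially degenerate factors involving $B_n,D_n$ must drop out of the limit, which is precisely what condition \eqref{bd:BnDn} is calibrated to ensure, with the cancellations mediated through the normalizations $\mathfrak C_{\A,\C}$ in \eqref{eq:C_ac}. Making these cancellations rigorous and uniform --- especially at the boundaries $\A=\infty$ or $\C=\infty$ where the asymptotics change character and the limiting kernel degenerates to $\ell_x$ --- is where the bulk of the technical work will lie.
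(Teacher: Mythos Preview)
Your plan starts in the right place --- the Askey--Wilson representation and the Laplace transform of the finite-dimensional distributions --- but there is a genuine gap in the identification of the limiting object for the rescaled Askey--Wilson process. The correct rescaling is $\wt Y^{(n)}_t := 2n\bigl(1-Y^{(n)}_{e^{2t/\sqrt n}}\bigr)$ (order $n$, not $\sqrt n$), and its transition density does \emph{not} converge to the killed-Brownian kernel $\g_{x_k-x_{k-1}}$: it converges to the $1/2$-stable Biane kernel $\p_{c_k}(u_k,u_{k+1})$ of \eqref{eq:Biane p}, with the ``time'' increment coming from the Laplace variable $c_k$, not from the spatial increment $x_k-x_{k-1}$. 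The spatial increments enter only through the exponential weight $\exp\bigl(-\tfrac14\sum_k(x_k-x_{k-1})u_k\bigr)$ produced by the powers $(\cdot)^{n_k-n_{k-1}}$. The passage from this Biane-process integral to an expression involving $\g_{x_k-x_{k-1}}$ and the density \eqref{eq:eta} is a separate, nontrivial step --- the dual representation of Proposition~\ref{prop:duality}, which exchanges the roles of space and Laplace variables --- and it is missing from your outline. Your ``conditional CLT given Askey--Wilson endpoint values'' picture does not correspond to any step of the actual argument; the Brownian factor $\tfrac{1}{\sqrt 2}\BB$ arises simply as the factor $\exp\bigl(\tfrac14\sum_k s_k^2(x_k-x_{k-1})\bigr)$ in the limit Laplace transform.

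Two further points. First, your plan implicitly assumes $\A+\C>0$ (the density \eqref{eq:eta} is not defined otherwise); the paper handles $\A+\C=0$ by a direct coupling/sandwich argument (Proposition~\ref{thm-coupling}) rather than via Laplace transforms. Second, for tightness the paper does not use a fourth-moment criterion but the same coupling with two Bernoulli product-measure ASEPs (Proposition~\ref{Prop-tightness}); this bypasses the uniform multi-point correlation estimates your route would require.
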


We  note that  cases   $\A=\infty$ or $\C=\infty$ in  \eqref{eq:MC}   include
Brownian excursion, Brownian meander, and its reversal  as well as  some slightly more general processes where  one of the parameters is infinite while the other one takes arbitrary real values.  Brownian excursion and  Brownian meander appeared  also in a related context %
for ASEP (but with fixed parameters)
 in \cite{bryc19limit} and in the non-rigorous discussion of the phase diagram and formula (3) in \cite{barraquand2022steady}.

 \begin{remark}\label{Rem:KPZ}
  When $\A+\C \geq 0$ are finite,  process $\widetilde X$  from \cite{barraquand2022steady}   has the same
  law as process $\frac{1}{\sqrt{2}} \eta\topp{\A,\C}$. %
Indeed,
 for $\A+\C=0$, this holds because both of these
  processes %
   are just the Brownian motion $\wt B=\BB/\sqrt{2}$, with the same drift
   $-\A x/2$.
   For $\A+\C>0$,   this can be  seen by comparing
     formula (49)  for the heat kernel, formula (53) for the joint probability function,
  and formula (57) for the normalizing constant from the supplementary material in \cite{barraquand2022steady} with the expressions \eqref{eq:g},
  \eqref{eq:eta}, \eqref{eq:C_ac} respectively.
 The Radon--Nikodym representation \eqref{BD-RN} for process $\frac{1}{\sqrt{2}}\eta\topp{\A,\C}$ is also discussed %
 in \cite{Bryc-Kuznetsov-2021}.

 \end{remark}

\arxivc{The comparison of finite dimensional laws requires  re-calculations.

If $0<x_1<x_2<\dots<x_d=1$  then from \eqref{eq:eta} the finite dimensional distribution of
$$\eta_{x_1}\topp{\A,\C},\dots,\eta_{x_d}\topp{\A,\C}$$  has density
$f(\vv y)$ in variables $y_1,\dots,y_d\geq 0$    given by
$$
f(\vv y)=\frac{e^{-\A y_d/\sqrt{2}}}{\mathfrak C_{\A,\C}}\int_0^\infty e^{-(\A+\C)z/\sqrt{2}}  \prod_{k=1}^d \g_{x_k-x_{k-1}}(z+y_{k-1},z+y_k)d z,
$$
where we set $y_0=0$.
Note that in the notation $G,G_b$ of   \cite[formula (49)]{barraquand2022steady}  we have
$$\g_t(x,y)=2^{-1/2}G(x/\sqrt{2},y/\sqrt{2},t)$$
and with $b=-z/\sqrt{2}$ we get
$$
\g_{x_k-x_{k-1}}(z+y_{k-1},z+y_k)=\frac{1}{\sqrt{2}}
G_{b}(y_{k-1}/\sqrt{2},y_k/\sqrt{2},x_k-x_{k-1})$$
After a change of variable to account for the factor $\sqrt{2}$ and
substituting $b=-z/\sqrt{2}$ in the variable of integration, we get \cite[formula (53)]{barraquand2022steady}, except for the factor $2(\wt u+\wt v)=(\C+\A)$,   recall that $2\wt u=\C$ and $2\wt v=\A$.  This additional factor is included in our normalizing constant \eqref{eq:C_ac}. In notation of  \cite[formula (57)]{barraquand2022steady} the two normalizing constants are related as follows:
\begin{equation}
    \label{C2Z}
    (\A+\C)\mathfrak C_{\A,\C}=\sqrt{2}\wt Z_{\wt u,\wt v}.
\end{equation}
}

\arxivc{
As noted in \cite{Bryc-Kuznetsov-2021}, one can also verify that for $0<\A+\C<\infty$ the law of the process $\frac{1}{\sqrt{2}}\eta\topp{\A,\C}$ is given by the Radon--Nikodym derivative  as specified in \eqref{BD-RN}, with the normalization constant $ \wt Z_{\wt u,\wt v}=(\A+\C)\mathfrak C_{\A,\C}/\sqrt{2}$.

This can be seen by combining
\cite[Theorem 1,1 and Theorem 2.1]{Bryc-Kuznetsov-2021}.
These two theorems show that for finite $\A+\C>0$, the law of the process $\frac{1}{\sqrt{2}}\eta\topp{\A,\C}$ is the limit as $\tau\to\infty$ of the process
$$\ccbb {X_x\topp \tau}_{x\in[0,1]}=\ccbb{\frac{1}{\sqrt{\tau}}X_{ x \tau }\topp{\A/\sqrt{\tau},\C/\sqrt{\tau}}}_{x\in[0,1]},$$
where $X:=\ccbb{X_x\topp{\A,\C}}_{x\in[0,\tau]}$ is the process introduced in \cite{barraquand2022steady} with the Radon--Nikodym derivative on  $C[0,\tau]$ with respect to the law of $B\topp \tau=\frac{1}{\sqrt{2}}\BB$ on $[0,\tau]$ given by
\begin{equation*}
    \label{RN-X}
        \frac{d
\proba_{ X}
}{d \mathbb{P}_{B\topp \tau}}(\beta) =\frac{1}{ \mathfrak K_{\A,\C,\tau}} e^{-\A \beta_\tau} \left(\int_0^\tau e^{-2 \beta_x}dx\right)^{-\A/2-\C/2},\;
 \beta=(\beta_x)\in C[0,\tau].
\end{equation*}
 Since process $B\topp \tau$ has the same law as process $\ccbb{\sqrt{\tau}B\topp 1 _{x/\tau}}_{x\in[0,\tau]}$, we have
\begin{equation}
    \label{RN-X-tau}
    \frac{d
\proba_{ X\topp \tau}
}{d \mathbb{P}_{B\topp 1}}(\wt \beta)
=\frac{1}{\mathfrak K_{\A/\sqrt{\tau},\C/\sqrt{\tau},\tau}}e^{-\A \wt \beta_1}\pp{ \tau \int_0^1 e^{-2 \sqrt{\tau}\wt \beta_x}dx}^{-(\A+\C)/(2\sqrt{\tau})} \; \wt \beta=(\wt \beta_x)\in C[0,1].
\end{equation}
Indeed, if       $ \frac{d
\proba_{ X}
}{d \mathbb{P}_{B\topp \tau}}(\beta)=\Phi((\beta_x)_{x\in[0,\tau]})$ for some
 $\Phi:C[0,\tau]\to\RR_+$  then $ \frac{d
\proba_{ X\topp \tau}
}{d \mathbb{P}_{B\topp 1}}(\wt \beta)=\Phi((\sqrt{\tau}\wt \beta_{x/\tau})_{x\in[0,\tau]})$ for $\wt \beta\in C[0,1]$.
With
$$\Phi(\beta)=\frac{1}{\mathfrak K}  e^{-\A \beta_\tau/\sqrt{\tau}} \left(\int_0^\tau e^{-2 \beta_x}dx\right)^{-(\A+\C)/(2\sqrt{\tau})}$$ we get
\begin{multline*}
  \Phi((\sqrt{\tau}\wt \beta_{x/\tau})_{x\in[0,\tau]})
=\frac{1}{\mathfrak K}  e^{-\A (\sqrt{\tau} \wt \beta_1)/\sqrt{\tau}} \left(\int_0^\tau e^{-2 \sqrt{\tau}\wt \beta_{x/\tau}}dx\right)^{-(\A+\C)/(2\sqrt{\tau})}
\\=\frac{1}{\mathfrak K}e^{-\A   \wt \beta_1 } \left(\tau\int_0^1 e^{-2 \sqrt{\tau}\wt \beta_{x}}dx\right)^{-(\A+\C)/(2\sqrt{\tau})}
\end{multline*}

We now check that the density \eqref{RN-X-tau} converges to the correct limit. We first verify convergence of the normalization constant.
Assuming for simplicity that $\A\ne \C$  from \cite[Formula (1.14) and Lemma 4.4]{Bryc-Kuznetsov-2021} we get
$$\lim_{\tau\to\infty} \mathfrak K_{\A/\sqrt{\tau},\C/\sqrt{\tau},\tau}= \frac{\A H(\A/2)-\C H(\C/2)}{\A-\C}.
$$
which differs by just a factor of $\sqrt{2}/(\A+\C)$ from $\mathfrak C_{\A,\C}$ as defined in \eqref{eq:C_ac}. In view of \eqref{C2Z}, this recovers the normalizing constant $\wt Z_{\wt u,\wt v}$.
 Since $\A+\C>0$,
$$\lim_{\tau\to\infty} \pp{\tau \int_0^1 e^{-2 \sqrt{\tau}\beta_x}dx}^{-(\A+\C)/(2\sqrt{\tau})}
=e^{(\A+\C)\min_{0\leq x\leq 1}\beta_x}.$$
Thus the Radon--Nikodym density \eqref{RN-X-tau} converges to the expression \eqref{BD-RN} with the normalization constant $\wt Z_{\wt u,\wt v}$ as  required.
}

\subsection{Organization of the paper} In Section \ref{sec:AW} we review Askey--Wilson processes, their relation to  the  matrix product ansatz for the open ASEP,  the particle-hole duality,
and a coupling technique which we use in Section \ref{sec:a+c=0} to prove Theorem \ref{thm:1} for  $\A+\C=0$.

In Section \ref{sec:a+c>0} we prove Theorem \ref{thm:1}   in  the case     $\A+\C>0$.
  The proof consists of two main steps.
  First, in Theorem \ref{thm:2} we compute  the limit of the Laplace transform
\[
\aa{\exp\pp{-\frac1{\sqrt n}\summ k1d c_kh_n(x_k)}}_n, \quad  0<x_1<\cdots<x_d=1,
\]
as $n\to\infty$.
Second,  in Section \ref{sec:duality} we
identify the limit as the Laplace transform of the process $(\BB+\eta\topp{\A,\C})/\sqrt{2}$.
In the proof, we work
under the additional assumption that $A_n\ge C_n$ (thus $\A\leq\C$), and the result for $A_n<C_n$ follows by the particle-hole duality.

At the heart of our computation of the limit Laplace transform, we rely on the  matrix ansatz of Derrida, Evans,  Hakim and Pasquier \cite{derrida93exact} and its representation via Askey--Wilson Markov processes, which are reviewed in Sections \ref{Sec:AWP} and \ref{Sec:MS4ASEP}.
Our approach is the same as the one used in recent developments on limit theorems for the height increment process of the open ASEP with fixed parameters  \cite{bryc19limit} and for open
weakly asymmetric simple exclusion process
 \cite{corwin21stationary}. The analysis here is more involved than the one in \cite{bryc19limit}, but much less so than the one in  \cite{corwin21stationary} where all the parameters depend on the size of the system.
\section{Preliminaries}
\label{sec:AW}

\subsection{Askey--Wilson processes}\label{Sec:AWP}

The Askey--Wilson polynomials were    introduced by Askey and Wilson  \cite{askey85some}.
The polynomials satisfy a three step recursion, so when Favard's theorem applies the polynomials are orthogonal with respect to a probability measure on the real line, which we shall call  the Askey--Wilson measure.
The Askey--Wilson processes are a family of Markov processes based on  these Askey--Wilson measures. The  material of this section is mainly based on \cite{bryc19limit}, which in turn is based  mostly on \cite{askey85some,bryc10askey}.
The Askey--Wilson probability measure $\nu(dy;a,b,c,d,q)$ depends on %
 five
parameters $a,b,c,d,q$ and it is invariant with respect to permutations of %
$a,b,c,d$.
It is assumed that $q\in(-1,1)$. %
For the   parameters $a,b,c,d$, it is assumed that they are all real, or two of the parameters are real and the other two form a complex conjugate pair,
or the parameters form two complex conjugate pairs.
It is difficult to  give general conditions on the parameters  that ensure
existence of the Askey-Wilson measure $\nu(dy;a,b,c,d,q)$.
A simple sufficient condition is
\begin{equation}\label{eq:restriction}
ac,ad,bc,bd,qac,qad,qbc,qbd, abcd, qabcd \not \in [1,\infty).
\end{equation}
Conditions that allow some of the above products to be in $[1,\infty)$ are listed in \cite[Lemma 3.1]{bryc10askey}.
In this paper we will  encounter  the degenerate Askey--Wilson measure corresponding to \cite[Lemma 3.1(iii)]{bryc10askey} with $N=0$.

In general, the  Askey--Wilson measure is   of mixed type
\[
\nu(dy;a,b,c,d,q)=f(y;a,b,c,d,q)dy+\sum_{z\in F(a,b,c,d,q)}\,p(z)\delta_z(dy),
\]
with the absolutely continuous part supported on $[-1,1]$ and with the discrete part supported on a finite or empty set $F$.
For certain choices of parameters, the measure can be only discrete or only absolutely continuous.
The absolutely continuous part is %
\begin{equation}\label{eq:f}
  f(y;a,b,c,d,q)=\frac{\qp{q,ab,ac,ad,bc,bd,cd}}{2\pi\qp{abcd}\sqrt{1-y^2}}\,\left|\frac{\qp{e^{2i\theta_y}}}
{\qp{ae^{i\theta_y},be^{i\theta_y},ce^{i\theta_y},de^{i\theta_y}}}\right|^2,
\end{equation}
where $y=\cos\,\theta_y$
(with the convention that $f(y;a,b,c,d,q)=0$ when $|y|>1$).
Here and below, for complex
$\alpha,\alpha_1,\dots,\alpha_k$,  $n\in\N\cup\{\infty\}$ and $|q|<1$ we
use the
$q$-Pochhammer symbol
\begin{equation*}
\qps{\alpha}_n=\prod_{j=0}^{n-1}\,(1-\alpha q^j), \quad \qps{\alpha_1,\cdots,\alpha_k}_n =\prodd j1k\qps{\alpha_j}_n.
\end{equation*}
It will be convenient to use the identity $\qp{a}=(1-a)\qp{q a}$ to separate the first factors  in \eqref{eq:f}  from  the remaining infinite products, so we write
\begin{equation*}
  f(y;a,b,c,d,q)=\frac{2}{\pi}J(y;a,b,c,d)R(y;a,b,c,d,q)
\end{equation*}
with
\begin{equation}\label{eq:J}
  J(y;a,b,c,d)=\frac{(1-ab)(1-ac)(1-ad)(1-bc)(1-bd)(1-cd) \sqrt{1-y^2}}{(1-abcd)
  \left|(1-ae^{i\theta_y})(1-be^{i\theta_y})(1-ce^{i\theta_y})(1-de^{i\theta_y})\right|^2}
, %
\end{equation}
\begin{equation}\label{eq:R}
  R(y;a,b,c,d,q)=\frac{\qp{q,qab,qac,qad,qbc,qbd,qcd} \left|\qp{qe^{2i\theta_y}}\right|^2}
  {\qp{qabcd}\left|\qp{qae^{i\theta_y},qbe^{i\theta_y},qce^{i\theta_y},qde^{i\theta_y}}\right|^2}.
\end{equation}

The set $F=F(a,b,c,d,q)$  of atoms of $\nu(dy;a,b,c,d,q)$ is   non-empty if  there is a real parameter $\alpha\in\{a,b,c,d\}$  with
$|\alpha|> 1$. Each such parameter generates atoms.
For example,
 if $|a|> 1$ then it generates the atoms
\begin{equation*}
  y_j=\frac12\pp{aq^j+\frac1{aq^j}} \mbox{ for $j=0,1,\dots$ such that $|aq^j|\ge 1$},
\end{equation*}
and the corresponding probabilities are
\begin{align*}
 p(y_0;a,b,c,d,q) & =\frac{\qp{a^{-2},bc,bd,cd}}{\qp{b/a,c/a,d/a,abcd}},\; \label{eq:p0}
 \\
p(y_j;a,b,c,d,q) & =p(y_0;a,b,c,d,q)\frac{\qps{a^2,ab,ac,ad}_j\,(1-a^2q^{2j})}{\qps{q,qa/b,qa/c,qa/d}_j(1-a^2)}\left(\frac{q}{abcd}\right)^j,\;
j\ge 1.\nonumber
\end{align*}
The %
expression for
$p(y_j;a,b,c,d,q)$  given here only applies for $a,b,c,d\ne 0$, and takes a different form otherwise. We  shall however  only
need $p(y_0;a,b,c,d,q)$ in this paper (with  $a=0$ defined as  the limit $a\to 0$).

The Askey--Wilson process is a time-inhomogeneous Markov process introduced in \cite{bryc10askey}, based on the Askey--Wilson measures. It is then explained in~\cite{bryc17asymmetric} how each ASEP with parameters $\alpha,\beta>0, \gamma,\delta\ge 0, q\in[0,1)$ is associated to an Askey--Wilson process $Y$, the parameters of which are denoted by $A,B,C,D,q$, with $A,B,C,D$ given in~\eqref{eq:ABCD}.

As we already noted, \eqref{eq:ABCD} implies $A,C\geq 0$ and $-1<B,D\leq 0$.
  Throughout the paper we shall assume  $AC<1$, which ensures that condition \eqref{eq:restriction} holds for the marginal laws (but not for the transition probabilities).
Then, the Askey--Wilson process with parameters $(A,B,C,D,q)$ is introduced as the Markov process with marginal  distribution
\begin{equation}\label{eq:AW marginal}
\proba(Y_t\in dy)=\nu\pp{dy;A\sqrt{t},B\sqrt t,C/\sqrt{t},D/\sqrt t,q},\quad 0<t<\infty,
\end{equation}  and the transition probabilities %
\begin{equation}
\label{eq:AW transition}
\proba(Y_t\in dz\mid Y_s=y)=\nu\pp{dz;A\sqrt{t},B\sqrt t,\sqrt{s/t}(y+\sqrt{y^2-1}),\sqrt{s/t}(y-\sqrt{y^2-1})},
\end{equation}
for $0<s<t$, $y,z>0$.
When $|y|<1$, expression $y\pm \sqrt{y^2-1}$ is understood as $e^{\pm i\theta_y}$ with  $\cos\theta_y = y$. It was shown in \cite{bryc10askey} that the above marginal laws and  transition probabilities satisfy the Chapman-Kolmogorov equations and hence determine a Markov process indexed by $t\in[0,\infty)$.
\arxivc{To recognize when the transition probabilities are degenerate, we will use the formula for the conditional variance. With $0< s<t$,  \cite[Proposition 2.5]{bryc10askey} gives
\begin{equation}\label{condmom1}
\esp(Y_t|Y_s)=\frac{(A+B)(t-s)+2(1-ABt)\sqrt{s}Y_s}{2\sqrt{t}(1-ABs)}\;,
\end{equation}
\begin{equation*}\label{condvar}
{\rm{Var}}(Y_t|Y_s)
=\frac{(1-q)(t-s)(1-ABt)}{4t(1-ABs)^2(1-qABs)}(1+A^2s-2A\sqrt{s}Y_s)(1+B^2s-2B\sqrt{s}Y_s)\;.
\end{equation*}
Strictly speaking, \cite[Proposition 2.5]{bryc10askey} was proved in the absolutely continuous case. However, the same formulas hold in more generality: formula \eqref{condmom1} just specifies  the root of the first Askey--Wilson polynomial defined in \cite[(3.12)]{bryc10askey}. The conditional variance can  be read out from the explicit expression for the second orthogonal polynomial.
The Askey--Wilson process turned out to be closely related  to a large family of Markov processes,  the so-called quadratic harnesses \cite{bryc07quadratic} in the literature; see \cite[Section 1.3]{bryc17asymmetric} for more on this connection.}

More explicit expressions for the law of $Y$ will  appear below when %
needed in the proofs.
\subsection{Matrix ansatz  for open ASEP and Askey--Wilson processes}
\label{Sec:MS4ASEP}
Recall that $\aa \cdot _n$ denotes the expectation with respect to
the invariant measure  $\mu_n$ of the open ASEP.
\cite{derrida93exact} introduced the celebrated %
matrix product  ansatz
that
 provides    an explicit expression for the joint generating function.
 Formally,
  for any $t_1,\dots,t_n>0$,
 from \cite{derrida93exact}
 one can write %
\begin{equation*}\label{MatrixAnsatz}
\aa{\prodd j1n t_j^{\tau_j}}_n = \frac{\langle W|(\mathsf{E}+t_1\mathsf{D})\times\cdots\times(\mathsf{E}+t_n\mathsf{D})|V\rangle}{\langle W|(\mathsf{E}+\mathsf{D})^n|V\rangle},
\end{equation*}
for a pair of infinite matrices $\mathsf{D},\mathsf{E}$, a row vector $\langle W|$ and a column vector $|V\rangle$, satisfying
\begin{align*}
\mathsf {DE} - q\mathsf{ED} & = \mathsf {D} + \mathsf {E},\\
\langle W|(\alpha \mathsf E - \gamma \mathsf D) & = \langle W|,\\
(\beta\mathsf D - \delta\mathsf E)|V\rangle & = |V\rangle.
\end{align*}
See \cite{derrida06matrix,derrida07nonequilibrium}   for  reviews of the literature.
See also \cite{Bryc-Swieca-2018}, in particular Appendix C there, for a discussion of the case where the matrix approach fails.   However, for our purpose, we shall apply an alternative expression developed recently
in \cite[Theorem 1]{bryc17asymmetric}, summarized
in
 the following theorem.
\begin{theorem}\label{T-BW17}
Consider the parametrization $A,B,C,D$ in~\eqref{eq:ABCD} for an open ASEP with parameters $\alpha,\beta>0,\gamma,\delta\ge 0$ and $q\in[0,1)$.
Suppose that $AC<1$.

If $0<t_1\leq t_2\leq \dots\leq t_n$, then the joint generating function of
the stationary distribution $\mu_n$ of the ASEP %
is
  \begin{equation}
 \label{eq:BW}
\aa{\prod_{j=1}^n t_j^{\tau_j}}_n=\frac{\esp\bb{\prod_{j=1}^n(1+t_j+2\sqrt{t_j}\,Y_{t_j})}}{2^n\esp(1+Y_1)^n},
  \end{equation}
  where $\{Y_t\}_{t\ge 0}$ is the Askey--Wilson process with parameters $(A,B,C,D,q)$.
\end{theorem}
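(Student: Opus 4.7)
The plan is to combine the Derrida--Evans--Hakim--Pasquier (DEHP) matrix product ansatz with an explicit operator realization of the DEHP algebra due to Uchiyama, Sasamoto, and Wadati (USW), and then reinterpret the resulting bracket formula as a joint moment of the Askey--Wilson process. Starting from the ansatz
\[
\aa{\prodd j1n t_j^{\tau_j}}_n = \frac{\langle W|(\mathsf{E}+t_1\mathsf{D})\cdots(\mathsf{E}+t_n\mathsf{D})|V\rangle}{\langle W|(\mathsf{E}+\mathsf{D})^n|V\rangle},
\]
identity \eqref{eq:BW} becomes an algebraic identity between matrix elements and Askey--Wilson moments, and the work is to match numerator and denominator separately.

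First I would fix the USW realization: $\mathsf D$ and $\mathsf E$ act as tridiagonal operators on a Hilbert space with an orthonormal basis $\{|k\rangle\}_{k\geq 0}$, and the boundary covector $\langle W|$ and vector $|V\rangle$ are built from $A, B, C, D$ so that the $q$-commutation $\mathsf{DE}-q\mathsf{ED}=\mathsf D+\mathsf E$ together with the boundary relations all hold. In this realization, the bounded self-adjoint operator $\mathsf Y_1 := (\mathsf D + \mathsf E - 2\mathsf I)/2$ admits a spectral resolution whose spectral measure under the normalized bracket $\langle W|\cdot|V\rangle/\langle W|V\rangle$ is precisely the Askey--Wilson measure $\nu(\cdot; A, B, C, D, q)$; this follows from recognizing the three-term recurrence satisfied by the diagonalizing basis as that of the Askey--Wilson polynomials. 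Consequently
\[
\frac{\langle W|(\mathsf E+\mathsf D)^n|V\rangle}{\langle W|V\rangle} = 2^n\, \esp[(1+Y_1)^n],
\]
which handles the denominator of \eqref{eq:BW}.

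For the numerator, the decisive step is to introduce the one-parameter family $\mathsf Y_t := (\mathsf E + t\mathsf D - (1+t)\mathsf I)/(2\sqrt{t})$, $t>0$, so that $\mathsf E + t\mathsf D = (1+t)\mathsf I + 2\sqrt{t}\,\mathsf Y_t$, and to show that under the normalized bracket the joint ``spectral law'' of $(\mathsf Y_{t_1},\dots,\mathsf Y_{t_n})$ with $0<t_1\le\dots\le t_n$ matches the finite-dimensional distribution of the Askey--Wilson process $(Y_{t_1},\dots,Y_{t_n})$ prescribed by \eqref{eq:AW marginal}--\eqref{eq:AW transition}. I would proceed by induction on $n$. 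The base case reduces to identifying the spectral measure of $\mathsf Y_t$ as $\nu(\cdot; A\sqrt{t}, B\sqrt{t}, C/\sqrt{t}, D/\sqrt{t}, q)$, which again follows from computing the action of $\mathsf Y_t$ in the USW basis and recognizing the Askey--Wilson three-term recurrence with the appropriately rescaled parameters. The inductive step amounts to a Chapman--Kolmogorov identity at the operator level: one shows that for $s<t$ the action of $(\mathsf E + t\mathsf D)$ on a polynomial of $\mathsf Y_s$, sandwiched between $\langle W|$ on the left and any admissible tail on the right, is computed by integration against the Askey--Wilson transition kernel in \eqref{eq:AW transition}. The $q$-commutation is used to normal-order the product $\mathsf Y_s \cdot (\mathsf E + t\mathsf D)$, and the conjugation producing the substitution $y\mapsto\sqrt{s/t}(y\pm\sqrt{y^2-1})$ in \eqref{eq:AW transition} emerges from the explicit rescaling $\sqrt{t}$ versus $\sqrt{s}$ in the definition of $\mathsf Y_t$.

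The main obstacle is this algebraic Chapman--Kolmogorov step: the noncommutative product structure of the matrix ansatz must be matched with the time-inhomogeneous Markov structure of $Y$, and the time-ordering hypothesis $t_1\le\dots\le t_n$ is indispensable because the Askey--Wilson transition kernel \eqref{eq:AW transition} is asymmetric in $(s,t)$. The standing assumption $AC<1$ enters through \eqref{eq:restriction} to ensure that the Askey--Wilson marginal measures at every $t\in[t_1,t_n]$ are genuine probability measures, so that both sides of \eqref{eq:BW} are well-defined and the spectral theorem applies in the USW representation without degenerate boundary issues.
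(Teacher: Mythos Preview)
The paper does not prove this theorem: it is quoted verbatim from \cite{bryc17asymmetric} (Theorem~1 there) and used as a black box. So there is no ``paper's own proof'' to compare against; the relevant comparison is with the argument in \cite{bryc17asymmetric}.

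Your outline identifies the correct ingredients---the DEHP matrix ansatz, the Uchiyama--Sasamoto--Wadati tridiagonal realization, and the identification of the spectral measure of $\mathsf Y_t=(\mathsf E+t\mathsf D-(1+t)\mathsf I)/(2\sqrt t)$ under the normalized bracket with the Askey--Wilson law $\nu(\cdot;A\sqrt t,B\sqrt t,C/\sqrt t,D/\sqrt t,q)$. That is exactly how the one-time marginals are handled in \cite{bryc17asymmetric}. Where your route diverges is the multi-time identification: you propose a direct induction via an ``operator-level Chapman--Kolmogorov'' step, whereas \cite{bryc17asymmetric} instead invokes the \emph{quadratic harness} characterization of the Askey--Wilson process from \cite{bryc10askey}. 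Concretely, they verify that the functional $\mathcal L$ defined by the normalized bracket on time-ordered polynomials in the $\mathsf Y_t$'s satisfies the same conditional first- and second-moment identities (the harness and quadratic-harness conditions) that uniquely determine the law of the Askey--Wilson process, and conclude by that uniqueness. This buys them a clean finite check in place of your open-ended induction.

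Your inductive step is the genuine soft spot. The operators $\mathsf Y_s$ and $\mathsf Y_t$ do not commute, so there is no joint spectral measure to speak of; the statement to be proved is an equality of \emph{ordered} mixed moments. Saying that ``the action of $(\mathsf E+t\mathsf D)$ on a polynomial of $\mathsf Y_s$ \dots\ is computed by integration against the Askey--Wilson transition kernel'' is a restatement of the goal, not a mechanism: to carry the induction you would need, for each $s<t$, an operator identity that lets you replace right-multiplication by $\mathsf Y_t$ with a conditional-expectation operator on polynomials in $\mathsf Y_s$, uniformly in whatever sits to the left. Producing that identity from the single relation $\mathsf{DE}-q\mathsf{ED}=\mathsf D+\mathsf E$ is precisely the nontrivial content, and your sketch does not indicate how the normal-ordering computation actually yields the specific kernel \eqref{eq:AW transition}. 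The quadratic-harness route in \cite{bryc17asymmetric} sidesteps this by reducing the problem to two algebraic identities that can be checked directly from the $q$-commutation.
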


\subsection{Particle-hole duality}\label{sec:PH}
The asymptotics for the height function in the low density phase $A_n\leq C_n$ is an immediate consequence of the asymptotics for the high density phase $A_n\geq C_n$, by the particle-hole duality which we now explain.

Consider an open ASEP of size $n$ with parameters
$(\alpha,\beta,\gamma,\delta,q)$ and stationary law that is convenient here to denote by $\mu_n^{\alpha,\beta,\gamma,\delta}$.
 (Since $q$ is fixed throughout the paper, we suppress the dependence on $q$.)
Instead of thinking of particles jumping around, we can view the particles as background and allow the holes to jump around. %
In this way, equivalently a hole jumps to the unoccupied left and right sites with rates 1 and $q$, respectively, and disappears at site $1$ with rate $\alpha$ and
at
site $n$ with rate $\delta$, and enters site $n$ if unoccupied with rate $\beta$ and site $1$ if unoccupied with rate
$\gamma$. The holes form   the open ASEP with parameters
$(\wt \alpha,\wt \beta,\wt \gamma,\wt \delta,q)=(\beta,\alpha,\delta,\gamma,q)$,
 if we relabel the sites $\{1,\dots,n\}$ as $\{n,\dots,1\}$ by $j\mapsto n-j+1$.
Consequently, the particles occupations $\tau_1,\dots,\tau_n$, represented by the black disks in Fig. \ref{Fig1}, are related to the holes occupations $\varepsilon_1,\dots,\varepsilon_n$, represented by the white disks in Fig. \ref{Fig1}, by
\begin{equation}\label{*eps*}
    \varepsilon_j = 1-\tau_{n-j+1},\quad 1\leq j\leq n,
\end{equation}
and the stationary law of $(\eps_1,\dots,\eps_n)$ is $\mu_n^{\beta,\alpha,\delta,\gamma}$.
  Introduce
\begin{equation*}
    \label{*wt h*}
    \what h_n(x) = \summ j1{\floor {nx}}\pp{2\varepsilon_j-1}.
\end{equation*}

The above argument shows that $\{\what h_n(x)\}_{x\in[0,1]}$ with respect to
$\mu_n^{\beta,\alpha,\delta,\gamma}$
has the same law as $\{h_n(x)\}_{x\in[0,1]}$ (defined in~\eqref{eq:h}) with respect to $\mu_n^{\alpha,\beta,\gamma,\delta}$.
This allows us to switch the roles of the pairs of parameters $(A,B)$ and $(C,D)$ in \eqref{eq:ABCD} which will simplify the proof of Theorem \ref{thm:1}.

\begin{proposition}
  \label{C:swap}
  If Theorem \ref{thm:1} holds  under an additional assumption that $A_n\geq C_n$ for all $n$, then it holds also without this additional assumption.
\end{proposition}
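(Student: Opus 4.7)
The plan is to invoke the particle-hole duality from Section \ref{sec:PH} to exchange the pairs $(A_n,B_n)$ and $(C_n,D_n)$. Since the assumed version of Theorem \ref{thm:1} already covers $A_n \geq C_n$, it suffices to treat the case $A_n < C_n$. The dual ASEP with parameters $(\beta_n,\alpha_n,\delta_n,\gamma_n)$ has Askey--Wilson parameters $(\tilde A_n,\tilde B_n,\tilde C_n,\tilde D_n)=(C_n,D_n,A_n,B_n)$, so Assumption \ref{assump:0} is preserved with $\tilde\A=\C$ and $\tilde\C=\A$, and the dual now satisfies $\tilde A_n \geq \tilde C_n$. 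Applying the assumed Theorem \ref{thm:1} to the dual, together with the duality statement that $\what h_n$ under $\mu_n^{\alpha_n,\beta_n,\gamma_n,\delta_n}$ has the same law as the height function of the dual ASEP under $\mu_n^{\beta_n,\alpha_n,\delta_n,\gamma_n}$, one obtains under the original measure
\begin{equation*}
\frac{1}{\sqrt n}\ccbb{\what h_n(x)}_{x\in[0,1]} \weakto \frac{1}{\sqrt 2}\ccbb{\BB_x+\eta\topp{\C,\A}_x}_{x\in[0,1]},
\end{equation*}
with the convergence mode prescribed by Theorem \ref{thm:1}.

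Using \eqref{*eps*}, reindexing the sum, and $h_n(0)=0$, one obtains the pathwise identity
\begin{equation*}
\what h_n(x)=h_n\bigl(1-\lfloor nx\rfloor/n\bigr)-h_n(1), \qquad x\in[0,1],
\end{equation*}
which also gives $\what h_n(1)=-h_n(1)$. Since $h_n$ is piecewise constant with unit jumps at grid points, this inverts exactly to $h_n(y)=\what h_n(1-\lfloor ny\rfloor/n)-\what h_n(1)$. Replacing the rounded argument by $1-y$ introduces an $O(1)$ discrepancy in $\what h_n$ uniformly in $y$, which becomes $o(1)$ after scaling by $1/\sqrt n$, so that uniformly in $y\in[0,1]$,
\begin{equation*}
\frac{1}{\sqrt n}h_n(y)=\frac{1}{\sqrt n}\what h_n(1-y)-\frac{1}{\sqrt n}\what h_n(1)+o(1).
\end{equation*}

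For $\A,\C$ finite, Skorokhod convergence to a continuous limit upgrades to uniform convergence, and the map $g\mapsto\ccbb{g(1-y)-g(1)}_{y\in[0,1]}$ is continuous on $C[0,1]$, so the continuous mapping theorem yields
\begin{equation*}
\frac{1}{\sqrt n}\ccbb{h_n(y)}_{y\in[0,1]} \weakto \frac{1}{\sqrt 2}\ccbb{(\BB_{1-y}-\BB_1)+(\eta\topp{\C,\A}_{1-y}-\eta\topp{\C,\A}_1)}_{y\in[0,1]},
\end{equation*}
with the analogous finite-dimensional statement (obtained by the same linear transformation of coordinates) covering the case when $\A$ or $\C$ equals $\infty$. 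To identify this limit with the one in Theorem \ref{thm:1}, observe that $\ccbb{\BB_{1-y}-\BB_1}_{y\in[0,1]}$ is a standard Brownian motion by time-reversal and, by Remark \ref{P:time-rev}, $\ccbb{\eta\topp{\C,\A}_{1-y}-\eta\topp{\C,\A}_1}_{y\in[0,1]} \eqd \ccbb{\eta\topp{\A,\C}_y}_{y\in[0,1]}$; independence is inherited from the dual limit. This reproduces $\frac{1}{\sqrt 2}\ccbb{\BB_y+\eta\topp{\A,\C}_y}_{y\in[0,1]}$. The conceptual crux is a double swap: duality exchanges $\A\leftrightarrow\C$ and Remark \ref{P:time-rev} exchanges them back, so the composition recovers the correct limit. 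The only technical care concerns the uniform boundary-rounding estimate and the upgrade from Skorokhod to uniform convergence at a continuous limit, both of which are standard.
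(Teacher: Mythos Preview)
Your argument follows the same particle-hole duality route as the paper, and the core ideas (swap $(A_n,B_n)\leftrightarrow(C_n,D_n)$, relate $h_n$ to $\what h_n$ up to a uniformly bounded error, then invoke Brownian time-reversal and Remark~\ref{P:time-rev}) match. One point deserves more care: your reduction ``it suffices to treat the case $A_n<C_n$'' silently assumes the inequality holds for \emph{all} $n$, whereas a general sequence satisfying Assumption~\ref{assump:0} may alternate between $A_n\ge C_n$ and $A_n<C_n$. The paper handles this explicitly by splitting $\NN$ into $\NN_+=\{n:A_n\ge C_n\}$ and $\NN_-=\NN\setminus\NN_+$, extending each to a full sequence so the hypothesis applies, and noting that both subsequences converge to the same limit. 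You should either add this subsequence argument or remark that convergence along both infinite subsequences to a common limit forces convergence of the full sequence.

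A second, smaller difference: for finite $\A,\C$ you derive $D[0,1]$ convergence directly via the continuous mapping theorem (upgrading Skorokhod to uniform convergence since the limit is continuous, then using continuity of $g\mapsto g(1-\cdot)-g(1)$ on $C[0,1]$), whereas the paper only extracts finite-dimensional convergence from duality and defers tightness to Proposition~\ref{Prop-tightness}. Your route is self-contained and avoids re-invoking tightness, which is a mild gain; just make sure to note that time-reversal is not continuous on $D[0,1]$ in general, so the upgrade to uniform convergence is essential before applying the map.
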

\begin{proof}
Suppose that a sequence  $(A_n,B_n,C_n,D_n)$  satisfies Assumption \ref{assump:0}. Consider sets of indexes
$\NN_+=\{n\in\NN: A_n\geq C_n\}$ and $\NN_-=\NN\setminus \NN_+$, at least one of which must be infinite.

 If $\NN_+$ is infinite,  we can extend $(A_n,B_n,C_n,D_n)_{n\in\NN_+}$
to a sequence $(\wt A_n,\wt B_n,\wt C_n, \wt D_n)_{n\in\NN}$ such that $\wt{A}_n\geq \wt C_n$ for all $n$. (For example, if $\NN_+=\{n_1,n_2,\dots\}$,  with $n_0:=0$
we can take $\wt A_n=A_{n_k}$ for $n\in[n_{k-1}+1,n_k]$.)
So our assumption implies that the limit \eqref{eq:MC} holds over the sub-sequence $\NN_+$.

It remains to prove that if $\NN_-$ is an infinite set, then we have convergence in  \eqref{eq:MC}  over $\NN_-$ to the same limit.
With some abuse of notation, let us re-parameterize the stationary measure of the open ASEP by the parameters $A_n,B_n,C_n,D_n$, writing $\mu_n^{A_n,B_n,C_n,D_n}$ instead of $\mu_n^{\alpha_n,\beta_n,\gamma_n,\delta_n}$.
 The stationary measure for the hole occupations is then  $\mu_n^{\beta_n,\alpha_n,\delta_n,\gamma_n}$ which, using \eqref{eq:ABCD}, we write as $\mu_n^{C_n,D_n,A_n,B_n}$.
 \arxivc{
Note that with $\wt \alpha_n=\beta_n$, $\wt \beta_n=\alpha_n$,
$\wt \gamma_n=\delta_n$ and $\wt \delta_n=\gamma_n$, formula \eqref{eq:ABCD}
applied to the parameters with the tilde gives
$\wt A_n=\kappa_+(\wt \beta_n,\wt \delta_n)=\kappa_+(\alpha_n,\gamma_n)=C_n$.
}
For $n\in\NN_-$, we have $C_n>A_n$, so  our assumption implies that Theorem \ref{thm:1} holds for $\what h_n$, with the limit taken  over  $\NN_-$. We get
\[
\frac1{\sqrt n}\ccbb{\what h_n(x)}_{x\in[0,1]}\fddto \frac1{\sqrt 2}\ccbb{\BB_x + \eta_x\topp{\C,\A}}_{x\in[0,1]}.
\]
 Observe that \eqref{*eps*} gives
\begin{equation}\label{eq:particle_hole}
h_n(x) - \pp{\what h_n(1-x)-\what h_n(1)} =
\displaystyle (1-2\tau_{\ceil{ nx}})\,  \ind_{nx\ne \ceil{nx}}.
\end{equation}
\arxivc{
Indeed, noting that $\floor{n - nx}=n-\ceil{nx}$ for the last line, the left hand side of  \eqref{eq:particle_hole} is
\begin{multline*}
 h_n(x) + \pp{\what h_n(1)-\what h_n(1-x)}=   \sum_{j=1}^{\floor{nx}}(2\tau_j-1)+\sum_{j=\floor{(1-x)n}+1}^n (2\eps_j-1)
    \\=\sum_{j=1}^{\floor{nx}}(2\tau_j-1) +\sum_{j=\floor{(1-x)n}+1}^n (1-2\tau_{n-j+1})
    \\=\sum_{j=1}^{\floor{nx}}(2\tau_j-1) +\sum_{k=1}^{n-\floor{(1-x)n}} (1-2\tau_{k})
        \\=\sum_{j=1}^{\floor{nx}}(2\tau_j-1) +\sum_{k=1}^{\ceil{nx}} (1-2\tau_{k}) =(1-2\tau_{\ceil{ nx}})\ind_{nx\ne \floor{nx}}.
\end{multline*}
}
Since the difference \eqref{eq:particle_hole}  is uniformly bounded,
the finite-dimensional distributions of %
$n^{-1/2}\sccbb{h_n(x)}_{x\in[0,1]}$ have the same limit
as the finite-dimensional distributions of %
$n^{-1/2}\sccbb{\what h_n(1-x) - \what h_n(1)}_{x\in[0,1]}$, and
 we arrive at
\[
\frac1{\sqrt n}\ccbb{h_n(x)}_{x\in[0,1]} \fddto  %
\frac1{\sqrt 2} \ccbb{\BB_{1-x} - \BB_{1} + \eta_{1-x}\topp{\C,\A}-\eta_1\topp{\C,\A}}_{x\in[0,1]} \eqd \frac1{\sqrt 2}\ccbb{\BB_x + \eta_x\topp{\A,\C}}_{x\in[0,1]},
\]
where in the last equality  we used
  $\ccbb{\BB_{1-x} - \BB_{1}}_{x\in[0,1]} \eqd \ccbb{\BB_x}_{x\in[0,1]}$ and \eqref{eq:reversal} from Remark \ref{P:time-rev}. Since $\NN=\NN_+\cup \NN_-$,  this ends the proof.
\end{proof}

\subsection{Coupling and tightness in $D[0,1]$ }\label{Sect:Coupling}
As in \cite{corwin21stationary}, we will deduce tightness  by coupling a  realization of the simple exclusion processes in the steady state  with  two sequences of $\{0,1\}$ valued random variables that have  products of  Bernoulli measures as the  marginal laws.

As in Section \ref{sec:PH}, we denote by $\mu_n^{A_n,B_n,C_n,D_n}$ the stationary distribution of the ASEP on $\{1,\dots,n\}$
with parameters $(q,A_n,B_n,C_n,D_n)$, where $q$ remains fixed.

Since we are interested only in the case $A_n\to 1$ and  $C_n\to 1$, without loss of generality we assume that parameters $A_n,C_n$ are not zero for all $n$.

Let $(\tau_1,\dots \tau_n)$ be a vector with the stationary law $\mu_n^{A_n,B_n,C_n,D_n}$. Define $\what A_n=1/C_n$ and $\wt C_n=1/A_n$. Let   $(\wt \tau_1,\dots \wt \tau_n)$  be a vector with the stationary law $\mu_n^{ A_n,B_n,\wt C_n,D_n}$
 (just one parameter changed),  and
let $(\what \tau_1,\dots, \what \tau_n)$ be a vector with the stationary law $\mu_n^{\what A_n,B_n,C_n,D_n}$.

From \cite[Lemma 5.1]{corwin21stationary} we then deduce the following result. (A  similar coupling for a pair of ASEPs is constructed in \cite[Lemma 2.1]{gantert2020mixing}.)

\begin{proposition} \label{thm-coupling} \begin{enumerate}[(i)]
  \item Random variables  $(\what \tau_1,\dots,\what \tau_n)$ are independent Bernoulli   random variables with $\proba(\what \tau_j=1)=\frac{1}{1+C_n}$, and random variables  $(\wt \tau_1,\dots, \wt \tau_n)$ are
  independent Bernoulli random variables with $\proba(\wt \tau_j=1)=\frac{ A_n}{1+A_n}$.
  \item The three vectors can be defined together on a single probability space in such a way that
\begin{equation}
  \label{compare-tau}
  \wt \tau_j\leq \tau_j\leq \what \tau_j,\quad j=1,2,\dots,n.
\end{equation}
\end{enumerate}
\end{proposition}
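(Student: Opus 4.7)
The plan is to derive Proposition \ref{thm-coupling} as a direct application of \cite[Lemma~5.1]{corwin21stationary}, which already supplies a monotone coupling of three ASEPs with suitably ordered boundary parameters. Both parts then amount to bookkeeping in our reparametrization $(A,B,C,D)$.

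For part (i), I would use the classical fact that when the ASEP parameters satisfy $AC=1$, the stationary measure is a product of Bernoulli distributions. The cleanest way to see this is through Theorem \ref{T-BW17}: when $AC=1$, the Askey--Wilson measure in \eqref{eq:AW marginal} falls into the degenerate regime singled out after \eqref{eq:restriction} (the $N=0$ case of \cite[Lemma~3.1(iii)]{bryc10askey}), and its marginals become single-atom measures so that the process $\{Y_t\}$ is deterministic, with $Y_t = \tfrac12(A\sqrt{t}+1/(A\sqrt t))$ in the regime $t>1/A^2$. The numerator of \eqref{eq:BW} then factorizes over the $t_j$'s, and a one-line computation shows that each factor equals $(1+At_j)/(1+A)$, i.e.\ the probability generating function of a Bernoulli variable with success probability $A/(1+A)$. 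Specializing to $\hat A_nC_n=1$ gives $\proba(\hat\tau_j=1)=\hat A_n/(1+\hat A_n)=1/(1+C_n)$, and specializing to $A_n\tilde C_n=1$ gives $\proba(\tilde\tau_j=1)=A_n/(1+A_n)$.

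For part (ii), I would produce the coupling graphically: all three ASEPs share a common family of Poisson clocks for the bulk exchanges at rates $1$ and $q$, and the four boundary-reservoir clocks are coupled monotonically via a thinning/shared-uniform construction, so that a particle creation in the less-dense system forces a simultaneous creation in the more-dense one (and conversely for removals). Because $A_nC_n<1$ forces $\hat A_n=1/C_n>A_n$ and $\tilde C_n=1/A_n>C_n$, and because (holding the other three parameters fixed) larger $A$ shifts the stationary density upward while larger $C$ shifts it downward, the attractivity of the open ASEP propagates any initial coordinatewise order through time. Starting the three coupled chains from an ordered configuration and taking $t\to\infty$ yields \eqref{compare-tau} under the joint stationary law.

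The main subtlety is verifying that the reparametrization $(A,B,C,D)\leftrightarrow(\alpha,\beta,\gamma,\delta)$ preserves monotonicity in the right direction: changing $A$ while keeping $B$ fixed forces a simultaneous change in both $\beta$ and $\delta$ (since the pair $(A,B)$ is determined by the two symmetric functions $A+B=(1-q-\beta+\delta)/\beta$ and $AB=-\delta/\beta$), so one must check that the resulting rate modification is compatible with Liggett's attractive ordering for open ASEP. This is exactly the verification carried out in \cite[Lemma~5.1]{corwin21stationary}, and once that lemma is invoked with the two Liggett-line parameter tuples identified in part (i), the proof reduces to a routine change-of-variables check.
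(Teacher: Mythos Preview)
Your approach is essentially the same as the paper's: both parts reduce to invoking \cite[Lemma~5.1]{corwin21stationary} once one checks that the boundary rates $(\alpha,\beta,\gamma,\delta)$ move in the right direction when $A$ or $C$ is perturbed. The paper carries out that rate check explicitly by writing $\alpha_n,\beta_n,\gamma_n,\delta_n$ in terms of $(A_n,B_n,C_n,D_n)$ and observing that replacing $A_n$ by $\hat A_n=1/C_n>A_n$ leaves $\alpha_n,\gamma_n$ unchanged while $\hat\beta_n\le\beta_n$ and $\hat\delta_n\ge\delta_n$ (and dually for $\tilde C_n$); you call this ``routine,'' which is fair, but it is the only substantive computation in the proof and you should actually do it rather than gesture at it.

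One technical caveat on part~(i): Theorem~\ref{T-BW17} is stated under the hypothesis $AC<1$, so you cannot literally apply it at $AC=1$. Your degenerate Askey--Wilson argument is correct in spirit and the computation you sketch does give the Bernoulli generating function, but you should either justify the extension of \eqref{eq:BW} to the boundary case $AC=1$ (e.g.\ by continuity, or by citing the $N=0$ case of \cite[Lemma~3.1(iii)]{bryc10askey} directly) or, as the paper does, simply cite the product-Bernoulli result from \cite[Remark~2.4]{bryc19limit}.
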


\begin{proof}
Since $\what A_nC_n=1$ it is clear that random variables $(\what \tau_1,\dots, \what \tau_n)$ are Bernoulli with $\proba(\what \tau_j=1)=\frac{\what A_n}{1+\what A_n}=\frac{1}{1+C_n}$, see
\cite[Remark 2.4]{bryc19limit}. Similarly, random variables $(\wt\tau_1,\dots,\wt \tau_n)$ are Bernoulli with $\proba(\wt\tau_j=1)=\frac{ A_n}{1+A_n}$. This proves statement (i).

  To construct the coupling of the three vectors, recall that ASEP parameters for $(\tau_1,\dots \tau_n)$ are given by the inverse of formula \eqref{eq:ABCD}, which gives
    \begin{align*}
  \alpha_n & = \frac{1-q}{(1+C_n)
   (1+{D_n})}, &
 \beta_n& =\frac{1-q}{(1+A_n)(1+B_n)},\\
   \gamma_n & = \frac{-(1-q)C_nD_n}{(1+C_n)(1+D_n)}, &\delta_n& = \frac{-(1-q)A_nB_n}{(1+A_n)(1+B_n)}.
   \end{align*}
   Recall that under our assumption we have $A_n,C_n> 0$,  $1+B_n,1+D_n>0$, and $-B_n,-D_n\geq 0$  and also $1-q>0$. Since $\what A_n=1/C_n>A_n$, therefore, the
     ASEP parameters for $( \what \tau_1,\dots, \what \tau_n)$ are $\what \alpha_n=\alpha_n, \what \beta_n\leq \beta_n,\what \gamma_n=\gamma_n,\what \delta_n\geq \delta_n $.
   Similarly, since $\wt C_n>C_n$ the ASEP parameters for $(\wt \tau_1,\dots \wt \tau_n)$ are  $\wt \alpha_n\leq \alpha_n$, $\wt \beta_n=\beta_n$, $\wt \gamma_n\geq \gamma_n$, $\wt \delta_n=\delta_n$.
Invoking \cite[Lemma 5.1]{corwin21stationary}, we get  \eqref{compare-tau}.  %
\end{proof}
For reader's convenience we describe the construction from \cite[Lemma 5.1]{corwin21stationary}, adapted to our setting and notation.
\begin{proof}[Sketch of proof of Lemma 5.1 in \cite{corwin21stationary} specialized to the case \eqref{compare-tau}]
Consider a three-species ASEP, with three types of particles:  the high-priority red, mid-priority blue, low-priority gray particles and with   the empty slots  that can be interpreted as an additional type of particle with the  lowest priority.  At any time $t\geq 0$, a site can be occupied  by at most one particle.  The time evolution of the three-species ASEP may  start from any initial state, which for concreteness we take  to be an empty state and is  a Markov process with the $4^n$-element state space $\{red,blue,gray,empty\}^n$.

We first define the transition rates "away from the boundaries". (This is of course a somewhat informal description, see \cite[Lemma 2.1]{gantert2020mixing} for a more precise approach.)
 \begin{enumerate}
   \item A red particle may jump to the right from site $j=1,\dots,n-1$  or  to the left at rate $q$ from site $j=2,\dots,n$, provided that the target site does not have a red particle. When a  red particle moves to an empty site or a site occupied by a blue, or gray particle, the particles swap their locations.

      \item  A blue particle may   jump to the right at rate 1 and to the left at rate $q$, provided that the target site does not have a red or a blue particle. When a blue particle moves to the empty site or a site occupied by a   gray particle,  the particles swap the locations.

          \item A gray  particle may jump to the right at rate 1 or to the left at rate $q$, provided that the target site is empty.

 \end{enumerate}
The evolution at the end points is described by the "mutation rates", where  a particle, including an empty site, can "mutate" into one of the other species.
The mutation rates are specified in the two  tables, one for each of the end-point locations.  The tables list  the transition (mutation) rates from a color specified in the left column into one of the colors listed in the first row.
For example, at the left endpoint, transitions gray$\mapsto$ blue occur at rate $\alpha_n-\wt \alpha_n$.

\medskip
 \hspace{-.5cm} \begin{tabular}{ccc}
  Left endpoint transition rates: & \hfill & Right endpoint transition rates: \\
\begin{tabular}{c|cccc}
&empty &gray& blue & red \\ \hline
empty &  0&0 & $\alpha_n-\wt \alpha_n$ &$\wt \alpha_n$\\\\
gray &$\gamma_n$ & 0 & $\alpha_n-\wt \alpha_n$ &$\wt \alpha_n$\\\\
blue &$\gamma_n$ &0 & 0 &$\wt \alpha_n$\\\\
red &$\gamma_n$ & 0&$\wt\gamma_n-\gamma_n$ & 0 \\
 \end{tabular}
&\hfill &
\begin{tabular}{c|cccc}
&empty &gray& blue & red \\ \hline
empty & 0& $\what \delta_n-\delta_n$&0  &$\delta_n$ \\ \\
gray &$\what \beta_n$ & 0 &0 &$\delta_n$\\\\
blue &$\what \beta_n$ &$\beta_n-\what \beta_n$ & 0 & $\delta_n$\\\\
red & $\what \beta_n$&$\beta_n-\what\beta_n$ & 0& 0 \\
 \end{tabular}
 \\
 \end{tabular}

 \medskip
The above multi-species ASEP evolution defines the following three vectors in $\{0,1\}^n$: %
 \begin{itemize}
   \item  $\wt \tau_j(t)=1$ when at time $t$ the $j$-th location is occupied by a   red particle
      \item $\tau_j(t)=1$ when at time $t$ the $j$-th location is occupied either by a red or a blue  particle
    \item  $\what \tau_j(t)=1$ when at time $t$ the $j$-th location is non-empty.
 \end{itemize}
 It is clear that
 \begin{equation*}
  \label{compare-tau(t)}
  \wt \tau_j(t)\leq \tau_j(t)\leq \what \tau_j(t),\quad j=1,2,\dots,n.
\end{equation*}
It is also clear that the evolution of vector $(\tau_1(t),\dots,\tau_n(t))$ alone (marginal law) is an ASEP with parameters $(q, A_n,B_n,C_n,D_n)$ and similarly, the other two vectors are the ASEPs with the parameters  $(q, A_n,B_n,\wt C_n,D_n)$ and  $(q, \what A_n,B_n,C_n,D_n)$ respectively.
 To conclude the construction, we define the joint law of $(\tau_1,\dots,\tau_n,\wt \tau_1,\dots,\wt \tau_n, \what \tau_1,\dots,\what \tau_n)$ as the stationary law of the Markov process, i.e., as the weak limit
 \[
 (\tau_1(t),\dots,\tau_n(t),\wt \tau_1(t),\dots,\wt \tau_n(t), \what \tau_1(t),\dots,\what \tau_n(t))\Rightarrow (\tau_1,\dots,\tau_n,\wt \tau_1,\dots,\wt \tau_n, \what \tau_1,\dots,\what \tau_n)
 \]
  as $t\to\infty$.
 The limit defines the appropriate joint law   such that \eqref{compare-tau} holds, and the marginal laws are the stationary laws of the corresponding ASEPs. (The first ASEP consists of only red particles. The second ASEP consists of   red or blue particles. The third ASEP is color-blind.)
 \end{proof}
 A useful technical consequence of Proposition \ref{thm-coupling} is tightness.
\begin{proposition}
  \label{Prop-tightness} If Assumption \ref{assump:0} holds with finite $\A,\C$, then the laws of
  \[
  \frac{1}{\sqrt{n}} \ccbb{  h_n(x)}_{x\in[0,1]}
  \]
  are tight in $D[0,1]$ and their subsequential limits have continuous trajectories.
\end{proposition}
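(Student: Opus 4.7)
The plan is to sandwich the increments of $h_n$ between two partial-sum processes of i.i.d.\ Bernoulli variables coming from the coupling in Proposition \ref{thm-coupling}, and to transfer tightness via Donsker's theorem. First, I introduce the random walks
\[
\wt h_n(x) = \sum_{j=1}^{\lfloor nx \rfloor}(2\wt\tau_j - 1), \qquad \what h_n(x) = \sum_{j=1}^{\lfloor nx \rfloor}(2\what\tau_j - 1),\qquad x\in[0,1],
\]
built from the coupled sequences of Proposition \ref{thm-coupling}. Because of the termwise inequality $\wt\tau_j \leq \tau_j \leq \what\tau_j$, for any $0 \leq x \leq y \leq 1$ one gets
\[
\wt h_n(y) - \wt h_n(x) \;\leq\; h_n(y) - h_n(x) \;\leq\; \what h_n(y) - \what h_n(x),
\]
and since $a\leq b\leq c$ implies $|b|\leq |a|+|c|$, this yields the pointwise bound $\omega(h_n,\delta)\leq \omega(\wt h_n,\delta)+\omega(\what h_n,\delta)$ for the uniform modulus of continuity on $[0,1]$.

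Second, I would apply Donsker's invariance principle to $\wt h_n$ and $\what h_n$ separately. By Proposition \ref{thm-coupling}(i), these are partial sums of i.i.d.\ $\pm 1$-valued variables whose step means equal $(A_n - 1)/(A_n + 1)$ and $(1 - C_n)/(1 + C_n)$; under Assumption \ref{assump:0} with finite $\A, \C$, both means are of order $1/\sqrt n$ with limits $-\A/2$ and $\C/2$ after multiplication by $\sqrt n$, while the step variances converge to $1$. Hence, after rescaling by $1/\sqrt n$, both processes converge in $D[0,1]$ to Brownian motions with linear drift, and in particular have continuous limits. It follows that $n^{-1/2}\omega(\wt h_n, \delta)$ and $n^{-1/2}\omega(\what h_n, \delta)$ each satisfy $\lim_{\delta\to 0}\limsup_n \proba(n^{-1/2}\omega(\cdot, \delta) > \eps) = 0$ for every $\eps > 0$.

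Combining these ingredients, $n^{-1/2}\omega(h_n, \delta)$ obeys the same vanishing modulus-of-continuity condition; marginal tightness at $x=0$ is automatic since $h_n(0)=0$, so Billingsley's tightness criterion in $D[0,1]$ is verified (the uniform modulus dominates the Skorokhod modulus $\omega'$). Subsequential limits are forced to be continuous because the jumps of $h_n$ have size $1$ and those of $n^{-1/2}h_n$ are therefore uniformly of order $n^{-1/2}\to 0$. I do not expect a serious obstacle here: once the sandwich inequality has been recorded, the argument reduces to a textbook invariance principle, and all the nontrivial work is already embedded in the explicit coupling provided by Proposition \ref{thm-coupling}.
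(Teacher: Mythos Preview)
Your proposal is correct and follows essentially the same approach as the paper: both use the coupling of Proposition~\ref{thm-coupling} to sandwich the increments of $h_n$ between those of the two i.i.d.\ walks, deduce a bound on the modulus of continuity, and invoke Donsker's theorem together with Billingsley's tightness criterion. The only cosmetic difference is that the paper explicitly separates each bounding walk into a centered random-walk part plus a deterministic drift $\wt\eps_n,\what\eps_n$ (which is where the finiteness of $\A,\C$ enters), whereas you absorb the drift into the Donsker limit; both arrangements yield the same conclusion.
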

\begin{proof}
   In the context of Proposition  \ref{thm-coupling}, in addition to $h_n$ defined in \eqref{eq:h},  consider
 the height function processes for  the two new ASEPs introduced there.
 It will be   convenient to consider the "high density centering"   and the "low density" centering as in \cite[(1.4)]{bryc19limit}:
\begin{equation}\label{eq:hhh}
\wt h_n^{\rm H}(x) := \summ j1{\floor{nx}}\pp{2\wt \tau_j-\frac{2A_n}{1+A_n}}, \quad x\in[0,1].
\end{equation}
\begin{equation}\label{eq:hh}
\what h_n^{\rm L}(x) := \summ j1{\floor{nx}}\pp{2\what \tau_j-\frac{2}{1+C_n}}, \quad x\in[0,1].
\end{equation}
Also introduce two deterministic functions:
\begin{equation}\label{the:eps}
  \wt \eps_n(x):= \floor{nx}\frac{A_n-1}{1+A_n} , \quad \mbox{ and } \quad \what \eps_n(x):= \floor{nx}\frac{1-C_n}{1+C_n}.
\end{equation}

Denoting $\Delta_{x_0,x_1}(f):=f(x_1)-f(x_0)$,
from  Proposition \ref{thm-coupling}  we get
\begin{equation}
  \label{h-sandwitch}
 \Delta_{x_0,x_1}( \wt h_n^{\rm H})+  \Delta_{x_0,x_1}(\wt \eps_n) \leq \Delta_{x_0,x_1}(  h_n) \leq  \Delta_{x_0,x_1}(\what h_n^{\rm L})+\Delta_{x_0,x_1}(\what \eps_n), \quad 0\leq x_0<x_1\leq 1.
\end{equation}
\arxivc{
Indeed,   with $n_0=\floor{nx_0}$ and $n_1=\floor{nx_1}$ we have
\begin{multline*}
   \Delta_{x_0,x_1}(  h_n) =\sum_{j=n_0+1}^{n_1} \pp{2\tau_j-1}\leq \sum_{j=n_0+1}^{n_1} \pp{2\what \tau_j-\frac{2}{1+C_n}}+ \sum_{j=n_0}^{n_1} \pp{\frac{2}{1+C_n}-1}
 \\  =\Delta_{x_0,x_1}(\what h_n^{\rm L}) + (n_1-n_0)\frac{1-C_n}{1+C_n}
 = \Delta_{x_0,x_1}(\what h_n^{\rm L})+\Delta_{x_0,x_1}(\floor{n x}) \frac{1-C_n}{1+C_n}.
\end{multline*}
Similarly,
\begin{multline*}
  \Delta_{x_0,x_1}(  h_n) \geq  \sum_{j=n_0+1}^{n_1} \pp{2\wt \tau_j-\frac{2A_n}{1+A_n}}+ (n_1-n_0)\frac{A_n-1}{1+A_n}
\\
 =  \Delta_{x_0,x_1}(\wt h_n^{\rm H})+\Delta_{x_0,x_1}(\floor{n x}) \frac{A_n-1}{1+A_n}.
\end{multline*}

}
Since  $\Delta_{x_0,x_1}(f) =-\Delta_{x_1,x_0}(f)$, \eqref{h-sandwitch} implies that for any $x_0,x_1\in[0,1]$ we have
\begin{equation}\label{Delta-bd}
  |\Delta_{x_0,x_1}(  h_n)|\leq |\Delta_{x_0,x_1}(\wt h_n^{\rm H})|+|\Delta_{x_0,x_1}(\what h_n^{\rm L})|+|\Delta_{x_0,x_1}(\what \eps_n)|+|\Delta_{x_0,x_1}(\wt \eps_n)|.
\end{equation}
From the proof of tightness in the Donsker's theorem we know that
 for any $\eps,\eta>0$ there exists $\delta\in(0,1)$ and $n_0$ such that
\[\proba\pp{\sup_{|x_1-x_0|<\delta}\left\{|\Delta_{x_0,x_1}(\wt h_n^{\rm H})|+|\Delta_{x_0,x_1}(\what h_n^{\rm L})|\right\}\geq \eps \sqrt{n}}\leq \eta, \quad n\geq n_0.
\]
Furthermore, uniformly in $x\in[0,1]$, we have
\begin{equation}\label{eps-lim1}
   \limn\frac{1}{\sqrt{n}} \what \eps_n(x)= \limn\frac{\floor{n x}}{n} \frac{\sqrt{n}(1-C_n)}{1+C_n}= x \frac{\C}{2}
\end{equation}
and
\begin{equation}\label{eps-lim2}
   \limn\frac{1}{\sqrt{n}} \wt \eps_n(x)= \limn\frac{\floor{n x}}{n} \frac{\sqrt{n}(A_n-1)}{1+A_n}=  - x \frac{\A}{2},
\end{equation}
so $\sup_{|x_0-x_1|<\delta}\left(|\Delta_{x_1,x_1}(\hat\eps_n)|+|\Delta_{x_1,x_1}(\tilde\eps_n)|\right)\le \delta(|\A|+|\C|)\sqrt{n}$ for large $n$.
Therefore, \eqref{Delta-bd} implies that
 for any $\eps,\eta>0$ there exists $\delta\in(0,1)$ and $n_0$ such that
\[\proba\pp{\sup_{|x_1-x_0|<\delta}|\Delta_{x_0,x_1}(h_n)|\geq \eps \sqrt{n}}\leq \eta, \quad n\geq n_0.
\]
Since $h_n(0)=0$, this shows, see \cite[Theorem 15.5]{billingsley1968}, that the sequence of processes $\frac{1}{\sqrt{n}}\ccbb{h_n(x)}_{x\in[0,1]}$ is  tight in $D[0,1]$ and the laws of  subsequential weak limits are supported on $C[0,1]$.
\end{proof}

\section{Proof of Theorem \ref{thm:1} for $\A+\C=0$}  \label{sec:a+c=0}
The case of  Theorem \ref{thm:1} for $\A+\C=0$ is a quick application of coupling from Proposition \ref{thm-coupling} combined with tightness from Proposition \ref{Prop-tightness}.
\begin{proof}[Proof of Theorem \ref{thm:1} for $\A+\C=0$]
  Recalling \eqref{eq:hhh}, \eqref{eq:hh} and \eqref{the:eps},  consider
\begin{equation*}\label{eq:hhh+}
\wt h_n(x) := \summ j1{\floor{nx}}\pp{2\wt \tau_j-1}=  \wt h_n^{\rm L}(x)+\wt \eps_n(x) , \quad x\in[0,1],
\end{equation*}
and
\begin{equation*}\label{eq:hh+}
\what h_n(x) := \summ j1{\floor{nx}}\pp{2\what \tau_j-1} = \what h_n^{\rm H}(x)+\what \eps_n(x)
, \quad x\in[0,1].
\end{equation*}
Since $\A+\C=0$, from Donsker's theorem and \eqref{eps-lim1}, \eqref{eps-lim2} we see that both processes have the same limit
  \begin{equation}
    \label{same-limit}
    \frac{1}{\sqrt{n}} \ccbb{\what  h_n(x)}_{x\in[0,1]}\fddto \ccbb{\BB_x -\frac{\A}{2}x}_{x\in[0,1]},  \quad
    \frac{1}{\sqrt{n}} \ccbb{\wt  h_n(x)}_{x\in[0,1]}\fddto \ccbb{\BB_x -\frac{\A}{2}x}_{x\in[0,1]}.
  \end{equation}
  (In fact, convergence is in $D[0,1]$.)
From  Proposition \ref{thm-coupling} it is clear that for $0\leq x \leq 1$ we have
\begin{equation}
  \label{h-sandwitch+}
  \wt h_n(x) \leq   h_n(x) \leq  \what h_n(x).
\end{equation}
In view of \eqref{same-limit}, we see that
\[
\frac{1}{\sqrt{n}} \ccbb{ h_n(x)}_{x\in[0,1]}\fddto \ccbb{\BB_x -\frac{\A}{2}x}_{x\in[0,1]}
\]
and since $\A,\C$ are finite,  by Proposition \ref{Prop-tightness} convergence is in $D[0,1]$.
\arxivc{
One way to prove convergence of finite dimensional distributions is to deduce  from \eqref{h-sandwitch+} inequalities  for the joint cumulative distribution functions. For fixed $\vv x\in[0,1]^d$ and $\vv y\in\RR^d$, define
$$F\topp n _{\vv x}(\vv y) :=\proba\pp{\bigcap_{j=1}^d \ccbb{ \frac{1}{\sqrt{n}}h_n(x_j)\leq y_j}},$$
and similarly, define
$$
\wt F\topp n _{\vv x}(\vv y) :=\proba\pp{\bigcap_{j=1}^d \ccbb{ \frac{1}{\sqrt{n}}\wt h_n(x_j)\leq y_j}}, \quad
\what F\topp n _{\vv x}(\vv y) :=\proba\pp{\bigcap_{j=1}^d \ccbb{ \frac{1}{\sqrt{n}}\what h_n(x_j)\leq y_j}}.$$
Then \eqref{h-sandwitch+} gives
$$\what F\topp n_{\vv x}(\vv y)\leq F\topp n_{\vv x}(\vv y)\leq \wt F\topp n_{\vv x}(\vv y). $$
Since by \eqref{same-limit}, the CDFs  $\wt F\topp n _{\vv x}(\vv y)$ and $\what  F\topp n _{\vv x}(\vv y)$  converge to the same limit, we get convergence of
 $F\topp n _{\vv x}(\vv y)$.
}
\end{proof}

\section{Proof of Theorem \ref{thm:1} for $\A+\C>0$}  \label{sec:a+c>0}
In view of Proposition \ref{C:swap}, it is enough to prove Theorem \ref{thm:1}, under an additional assumption $A_n\ge C_n$, which implies $\A\le \C$.
Note   that this excludes the case $\A = \infty, \C\in\R$ but includes the cases $\A=\C=\infty$ and $\A\in\RR, \C=\infty$. For   convenience, we restate Assumption \ref{assump:0}   with these additional constraints on $\A, \C$ stated explicitly.
\begin{assumption}\label{assump:1}
 We assume that  $A_n,C_n\ge 0$, $A_nC_n<1$, $A_n\geq C_n$,  $B_n,D_n\in(-1,0]$ for all $n\geq 1$. Moreover,  we assume that
\[
\limn C_n = 1 \qmand  \limn \sqrt n(1-C_n) = \C\in(0,\infty], \]
and
\[
\limn A_n = 1 \qmand \limn \sqrt n(1-A_n) = \A\in(-\C,\C],
\]
(so  $\A+\C>0$).
We also assume that \eqref{limBnDn} and \eqref{bd:BnDn} hold.

\end{assumption}
 For $d\in\N$, and $\vv x=(x_1,\dots,x_d)$
 with
$x_0:=0< x_1<\cdots<x_d=1$,
 we introduce the Laplace transform $\varphi_{\vv x,n}$ by the formula
\begin{align}\label{def:Lap}
\varphi_{\vv x,n}(\vv c) & := \aa{\exp\pp{-\summ k1d c_kh_n(x_k)}}_n, \quad \vv c=(c_1,\dots,c_d) \in \R^d.
\end{align}

Below we shall compute the limit of $\varphi_{\vv x,n}(\vv c/\sqrt n)$ as $n\to\infty$. By \cite[Appendix]{bryc19limit}, this shall determine the convergence  of the finite-dimensional distributions, even though  the coefficients $\vv c=(c_1,\dots,c_d)$ will be  restricted to an open subset  of $\RR^d$ determined by inequalities \eqref{eq:a+c} .

 The limit is expressed in terms of the process,    called  the $1/2$-stable Biane process in \cite{bryc16local},
which can also be defined as a square of  the  radial  part  of  a  3-dimensional  Cauchy  process \cite[Corollary 1]{KyprianouOConnell2021}.
 This is a time-homogeneous Markov process taking values in $(0,\infty)$ with transition probability density function
\begin{equation}\label{eq:Biane p}
\p_{t}(u,v) = \frac{2t\sqrt v}{\pi[t^4+2t^2(u+v)+(u-v)^2]}\inddd{u,v>0},\; t>0.
\end{equation}
 \begin{theorem}\label{thm:2} Under Assumption \ref{assump:1}, for all $\vv c=(c_1,\dots,c_d) \in\RR^d$ such that   %
\begin{equation}\label{eq:a+c}
 c_1,\dots,c_{d-1}>0  \mbox{ and }
-\A<c_d<c_1+\cdots+c_d<\C,
\end{equation}
we have
\begin{equation}
  \label{konkluzja}
  \limn\varphi_{\vv x,n}\pp{\frac{\vv c}{\sqrt n}}  %
= \esp \exp\pp{-\summ k1d \frac{c_k}{\sqrt 2}\BB_{x_k}}  \cdot \Psi\topp{\A,\C}_\vvx\pp{\vvc},
\end{equation}
with $\Psi_\vvx\topp{\A,\C}$ which has three different  forms as follows:
\begin{equation}
\label{PsiAC}
\Psi_\vvx\topp{\A,\C}(\vvc)  =
\begin{cases}
\displaystyle\frac{\sqrt{2}}{\pi \mathfrak C_{\A,\C}} \int_{\R_+^d}\frac{\sqrt {u_1}}{((\C-s_1)^2+u_1)((\A+c_d)^2+u_d)} L_{\vvx,\vvc}(\vvu)\,  d\vv u   & \A,\C\in\RR,
\\
\\
\displaystyle\frac1{2\sqrt{2}\,\pi\mathfrak C_{\A,\infty}}\int_{\R_+^d}\frac{\sqrt{u_1}}{(\A+c_d)^2+u_d}  L_{\vvx,\vvc}(\vvu) \, d\vv u  & \A\in\RR, \C=\infty,\\
 \\
\displaystyle \frac1{4\sqrt \pi} \int_{\R_+^d} \sqrt{u_1} L_{\vvx,\vvc} (\vvu) \, d\vv u  & \A=\infty,\C=\infty,
\end{cases}
\end{equation}
where %
$s_1=c_1+\dots+c_d$,  constant
$\mathfrak C_{\A,\C}$ is defined in \eqref{eq:C_ac},
and
\begin{equation}
    \label{eq:Gx1}
   L_{\vvx,\vvc}(\vvu) = \exp\pp{-\frac14\summ k1d(x_k-x_{k-1})u_k}\prodd k1{d-1}\p_{c_k}(u_k,u_{k+1}).
\end{equation}

\end{theorem}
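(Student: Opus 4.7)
My plan has three stages: reduce the Laplace transform to a ratio of Askey--Wilson expectations via Theorem \ref{T-BW17}; Taylor-expand to split off the Gaussian (Brownian) contribution; and compute the scaling limit of the joint density of the Askey--Wilson chain at times near $t=1$ through a telescoping cancellation. For the \emph{reduction}, I would set $s_k:=c_k+\cdots+c_d$, $t^{(k)}:=e^{-2s_k/\sqrt n}$, and $t_j:=t^{(k)}$ for $j\in(\floor{nx_{k-1}},\floor{nx_k}]$; the hypothesis \eqref{eq:a+c} gives $s_1>\cdots>s_d$, so $t_1\le\cdots\le t_n$ and Theorem \ref{T-BW17} yields
\[
\varphi_{\vvx,n}(\vvc/\sqrt n)=\exp\pp{\sum_{k=1}^d(n_k-n_{k-1})s_k/\sqrt n}\cdot\frac{\esp\prod_{k=1}^d(1+t^{(k)}+2\sqrt{t^{(k)}}\,Y_{t^{(k)}})^{n_k-n_{k-1}}}{2^n\,\esp(1+Y_1)^n},
\]
with $n_k:=\floor{nx_k}$ and $Y$ the Askey--Wilson process with parameters $(A_n,B_n,C_n,D_n,q)$.

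For the \emph{Taylor expansion}, writing $Y_{t^{(k)}}=\cos\theta_k$ in the scaling $u_k:=n\theta_k^2$ gives $1+t^{(k)}+2\sqrt{t^{(k)}}\cos\theta_k=4\bb{1-s_k/\sqrt n+(s_k^2-u_k)/(4n)+O(n^{-3/2})}$, whence $(\cdots)^{n_k-n_{k-1}}\sim 4^{n_k-n_{k-1}}\exp\pp{-(n_k-n_{k-1})s_k/\sqrt n+(x_k-x_{k-1})(s_k^2-u_k)/4}$; similarly $(1+Y_1)^n\sim 2^n e^{-u/4}$ with $u:=n\arccos^2(Y_1)$. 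The deterministic drifts cancel the prefactor, $4^n/(2^n\cdot 2^n)=1$, and Abel summation gives $\tfrac14\sum_k s_k^2(x_k-x_{k-1})=\tfrac12\var\pp{\sum_k c_k\BB_{x_k}/\sqrt 2}$, producing the Brownian Laplace factor $\esp\exp(-\sum_k c_k\BB_{x_k}/\sqrt 2)$. One is left with
\[
\Psi\topp{\A,\C}_{\vvx}(\vvc)=\lim_n\frac{\esp\exp\pp{-\tfrac14\sum_k(x_k-x_{k-1})u_k}}{\esp e^{-u/4}}.
\]

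The main step is the \emph{joint density via telescoping}. I would compute this limit from the joint density of $(u_1,\dots,u_d)$ using the explicit Askey--Wilson marginal \eqref{eq:f}--\eqref{eq:R} and transition \eqref{eq:AW transition}. The crucial observation is a \emph{telescoping cancellation}: in the transition from $t^{(k-1)}$ to $t^{(k)}$ the numerator factors $(1-ac)(1-ad)=|1-A_n\sqrt{t^{(k-1)}}e^{i\theta_{k-1}}|^2$ and $(1-bc)(1-bd)=|1-B_n\sqrt{t^{(k-1)}}e^{i\theta_{k-1}}|^2$ exactly cancel the corresponding denominator factors $|1-ae^{i\theta_{k-1}}|^2$ and $|1-be^{i\theta_{k-1}}|^2$ of the preceding marginal or transition density. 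Three contributions survive: the left boundary factor $\sqrt{u_1}/((\C-s_1)^2+u_1)$ from $C_n/\sqrt{t^{(1)}}\approx 1-(\C-s_1)/\sqrt n$ at the marginal at $t^{(1)}$; the right boundary factor $1/((\A+c_d)^2+u_d)$ from $A_n\sqrt{t^{(d)}}\approx 1-(\A+c_d)/\sqrt n$ at the last transition (using $s_d=c_d$); and the interior Biane kernels $\p_{c_k}(u_k,u_{k+1})$, obtained by expanding $(1+r^2-2r\cos(\theta_{k-1}+\theta_k))(1+r^2-2r\cos(\theta_{k-1}-\theta_k))$ with $r=e^{-c_{k-1}/\sqrt n}$ to the Biane denominator $c_{k-1}^4+2c_{k-1}^2(u_{k-1}+u_k)+(u_{k-1}-u_k)^2$. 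The $q$-Pochhammer factor $R$ in \eqref{eq:R} tends uniformly to $1$, for which \eqref{bd:BnDn} is the needed estimate. The normalizing integral $\esp(1+Y_1)^n/2^n$ is treated identically at time $1$ (no $s$-shift) and matches $\pi\mathfrak C_{\A,\C}/\sqrt 2$ via the identity $\int_0^\infty\frac{\sqrt u\,e^{-u/4}}{(\A^2+u)(\C^2+u)}du=\frac{\pi\mathfrak C_{\A,\C}}{\sqrt 2}$ (a direct computation using $\erfc$); the common prefactor $(\A+\C)/\pi$ cancels in the ratio.

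The cases $\A=\infty$ and/or $\C=\infty$ correspond to a boundary Askey--Wilson parameter staying bounded away from $1$, so the associated boundary factor becomes constant and is absorbed into the normalization, recovering the simpler forms in \eqref{PsiAC} and the matching values of $\mathfrak C_{\A,\C}$ in \eqref{eq:C_ac}. The principal technical difficulty is the interchange of limit and integration: one needs an integrable majorant for the rescaled density uniform in $n$, uniform control of the $q$-Pochhammer ratios (via \eqref{bd:BnDn}), and handling of any atomic part of the Askey--Wilson measure at finite $n$ (ruled out in the limit by \eqref{eq:a+c}). The structure follows \cite{bryc19limit} and is simpler than \cite{corwin21stationary} because $q$ is kept fixed.
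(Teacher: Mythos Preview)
Your approach is essentially the paper's: the same reduction via Theorem~\ref{T-BW17}, the same rescaling $u_k\approx 2n(1-Y_{t^{(k)}})$, the same identification of the Brownian factor from $\tfrac14\sum_k s_k^2(x_k-x_{k-1})$, the telescoping of the $A_n$-factors through the chain of transition densities to leave the boundary weights and the Biane kernels, and the separate treatment of the denominator. The paper organizes the telescoping into two lemmas (pointwise limit plus uniform bound for the rescaled marginal $\wt\pi_t^{(n)}/\mathsf\Pi_n$, and pointwise limit for the rescaled transitions $\wt p_{s,t}^{(n)}$), then passes to the limit by dominated convergence; your sketch compresses this but the content is the same.

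There is, however, a genuine gap in your treatment of atoms. Your parenthetical ``ruled out in the limit by \eqref{eq:a+c}'' is correct for the \emph{numerator} as far as $A_n,C_n$ are concerned (indeed $A_n\sqrt{t^{(k)}}<1$ and $C_n/\sqrt{t^{(k)}}<1$ for large $n$), but it is \emph{false for the denominator} when $\A<0$, which is allowed under Assumption~\ref{assump:1}. In that regime $A_n>1$ eventually, so $Y_1^{(n)}$ has an atom at $\mathsf y_n=\tfrac12(A_n+A_n^{-1})$, and this atom contributes at leading order: $2^{-n}(1+\mathsf y_n)^n\to e^{\A^2/4}$ with mass $\mathfrak p_n\sim -2\A/(\C-\A)\cdot(1-B_nD_n)/(1-A_nB_nC_nD_n)$. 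Consequently your claimed identity
\[
\int_0^\infty\frac{\sqrt u\,e^{-u/4}}{(\A^2+u)(\C^2+u)}\,du=\frac{\pi}{\sqrt 2}\,\mathfrak C_{\A,\C}
\]
is only valid for $\A\ge 0$; the left side is even in $\A$ while $\mathfrak C_{\A,\C}$ is not. For $\A<0$ the continuous part produces $H(-\A/2)=H(|\A|/2)$, and it is precisely the atomic contribution $-2\A e^{\A^2/4}/(\C^2-\A^2)$ that converts this to $H(\A/2)$ via $H(x)+H(-x)=2e^{x^2}$, recovering the correct $\mathfrak C_{\A,\C}$.

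A second, smaller point: in the numerator one must also allow for a $D_n$-generated atom in the marginal at $t_{1,n}$ when $D=-1$ (so $|D_n/\sqrt{t_{1,n}}|$ may exceed $1$). The paper splits off this piece as $I_{n,1}$ and shows $I_{n,1}/(2^n\mathsf\Pi_n)\to 0$ using that the atom sits near $-1$, so the factor $(\cosh(s_1/\sqrt n)+Y_{t_{1,n}})^{n_1}$ is bounded by $(1+\delta)^{n_1}\ll 2^{n_1}$; this requires the mild growth control \eqref{eq:Pi-bd} on $\mathsf\Pi_n$, which is where \eqref{bd:BnDn} enters.
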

The proof of Theorem \ref{thm:2} consists of several steps and is presented in Section \ref{sec:Proof:T2}.

The second %
  step in   the proof of Theorem \ref{thm:1}   for $\A+\C>0$   is to  express  $\Psi_\vvx\topp{\A,\C}$  as the Laplace transform of a stochastic process. We will show the following.
\begin{proposition}\label{prop:duality summary}
Fix $\A\leq \C$. For  $d\in\N$, $\vv x=(x_1,\dots,x_d)$
 with
$x_0:=0< x_1<\cdots<x_d=1$, and $\vv c=(c_1,\dots,c_d) \in \R^d$ such that \eqref{eq:a+c}  holds, %
we have
\[\Psi\topp{\A,\C}_\vvx(\vvc)  = \esp\exp\pp{-\frac1{\sqrt 2}\summ k1d c_k\eta\topp{\A,\C}_{x_k}},\]
where $\eta\topp{\A,\C}$ is defined in Section \ref{sec:eta}.
\end{proposition}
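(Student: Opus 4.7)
The plan is to compute the right-hand Laplace transform directly from the defining joint density of $\wt\eta\topp{\A,\C}$ given in Section \ref{sec:eta} and identify the result with $\Psi\topp{\A,\C}_\vvx(\vvc)$ by means of the sine-eigenfunction expansion of the half-line Brownian transition kernel $\g_t$. I focus on the case $\A,\C\in\RR$ with $\A+\C>0$; the cases $\C=\infty$ or $\A=\C=\infty$ are handled by the same recipe, with the obvious modifications at the relevant boundaries.

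Using $\eta\topp{\A,\C}_{x_k}=\wt\eta\topp{\A,\C}_{x_k}-\wt\eta\topp{\A,\C}_{0}$ and the joint density \eqref{eq:eta}, the target Laplace transform equals
\begin{equation*}
\frac{1}{\mathfrak C_{\A,\C}}\int_{\RR_+^{d+1}}\exp\pp{-\frac{(\C-s_1)y_0+\sum_{k=1}^{d-1}c_k y_k+(\A+c_d)y_d}{\sqrt 2}}\prod_{k=1}^d \g_{x_k-x_{k-1}}(y_{k-1},y_k)\,d\vv y,
\end{equation*}
where $s_1=c_1+\cdots+c_d$. All linear coefficients in the exponential are strictly positive by \eqref{eq:a+c}, which guarantees absolute convergence of this integral and of all the integrals that appear below. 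To turn the $\g$ kernels into $\p$ kernels I substitute the spectral representation
\begin{equation*}
\g_t(x,y)=\frac{2}{\pi}\int_0^\infty \sin(\lambda x)\sin(\lambda y)\,e^{-\lambda^2 t/2}\,d\lambda
\end{equation*}
(obtained from the sine-eigenfunction expansion of the Brownian semigroup absorbed at $0$) into each of the $d$ factors and invoke Fubini to perform the $y$-integrations first.

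The boundary integrals in $y_0$ and $y_d$ are of the elementary form $\int_0^\infty\sin(\lambda y)e^{-\mu y}dy=\lambda/(\lambda^2+\mu^2)$; after the change of variables $u_k=2\lambda_k^2$ they become $\sqrt{2u_1}/(u_1+(\C-s_1)^2)$ and $\sqrt{2u_d}/(u_d+(\A+c_d)^2)$ respectively. Each interior integral for $1\le k\le d-1$ reduces via $\sin A\sin B=\tfrac12[\cos(A-B)-\cos(A+B)]$ and $\int_0^\infty\cos(\omega y)e^{-\mu y}dy=\mu/(\mu^2+\omega^2)$ to
\begin{equation*}
\int_0^\infty \sin(\lambda_k y)\sin(\lambda_{k+1}y)e^{-c_k y/\sqrt 2}dy=\frac{c_k}{2\sqrt 2}\bb{\frac{1}{c_k^2/2+(\lambda_k-\lambda_{k+1})^2}-\frac{1}{c_k^2/2+(\lambda_k+\lambda_{k+1})^2}},
\end{equation*}
which under $u_k=2\lambda_k^2$ simplifies, by a short calculation comparing the result with \eqref{eq:Biane p}, to exactly $\sqrt 2\pi\sqrt{u_k}\,\p_{c_k}(u_k,u_{k+1})$. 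Collecting the $\sqrt{u_k}$ factors from the two boundary integrals, the $d-1$ interior integrals, and the Jacobian $\prod_k du_k/(2\sqrt{2u_k})$ leaves precisely $\sqrt{u_1}$ in the integrand, while the numerical prefactors $(2/\pi)^d$, $(\sqrt 2\pi)^{d-1}$, two copies of $\sqrt 2$ from the boundaries, and $(2\sqrt 2)^{-d}$ from the Jacobian combine to $\sqrt 2/(\pi\mathfrak C_{\A,\C})$, matching the first line of \eqref{PsiAC} on the nose.

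For $\C=\infty$ (with $\A$ finite), the density \eqref{eq:proces-Ainfty} replaces the $y_0$-boundary factor $e^{-\C y_0/\sqrt 2}$ by the hitting density $\ell_{x_1}(y_1)$, which admits the analogous spectral representation $\ell_x(y)=\tfrac1\pi\int_0^\infty\lambda\sin(\lambda y)e^{-\lambda^2 x/2}d\lambda$. The same argument then gives the second line of \eqref{PsiAC}; effectively the $y_0$-boundary contribution is dropped and replaced by an extra $\lambda_1=\sqrt{u_1/2}$ in the integrand, while the numerical prefactor readjusts to $1/(2\sqrt 2\,\pi\mathfrak C_{\A,\infty})$. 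The symmetric case $\A=\C=\infty$ uses $\ell$ at both boundaries and produces the third line of \eqref{PsiAC}. Alternatively, the degenerate cases can be recovered by passing $\C\to\infty$ in the two-parameter formula, using the asymptotics $\mathfrak C_{\A,\C}\sim 4\mathfrak C_{\A,\infty}/\C^2$ and $1/((\C-s_1)^2+u_1)\sim 1/\C^2$. The only real difficulty is the bookkeeping of numerical constants across the three cases; once the spectral representations are in place, the identification is purely mechanical.
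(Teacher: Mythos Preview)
Your proof is correct and rests on exactly the same two spectral identities the paper uses, namely the sine decomposition $\g_t(x,y)=\tfrac{2}{\pi}\int_0^\infty\sin(\lambda x)\sin(\lambda y)e^{-\lambda^2 t/2}d\lambda$ and the sine integral that produces $\p_{c_k}$; you simply run the computation in the opposite direction. The paper first isolates a general dual-representation lemma (Proposition \ref{prop:duality}), starting from the $\p$-side integral and transforming it into the $\g$-side integral for arbitrary boundary weights $f,g$, and then specializes to the three cases by choosing $f,g$ and computing their sine transforms $\what f,\what g$ (which brings in the identity \eqref{eq:FH}). Your route starts from the $\g$-side (the explicit joint density of $\wt\eta\topp{\A,\C}$), inserts the sine decomposition of each $\g$ factor, and integrates out the $y$-variables to recover the $\p$-side integrand of $\Psi_\vvx\topp{\A,\C}$. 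The advantage of the paper's organization is that the general duality is stated once and can be reused; the advantage of yours is directness and that the Fubini justification is marginally cleaner since the $y$-integrals and $\lambda$-integrals decouple under absolute values. Your side remark on recovering the degenerate cases by $\C\to\infty$ with $\mathfrak C_{\A,\C}\sim 4\mathfrak C_{\A,\infty}/\C^2$ is also correct, though the paper does not go that route.
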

We shall deduce Proposition \ref{prop:duality summary} from a more general Proposition \ref{prop:duality} in Section \ref{sec:duality}.

Assuming these two results, we can now prove   Theorem \ref{thm:1} for $\A+\C>0$.

\begin{proof}[Proof of Theorem \ref{thm:1} for $\A+\C>0$]
By
Proposition \ref{C:swap} we can replace
Assumption \ref{assump:0} by Assumption \ref{assump:1} and apply Theorem \ref{thm:2}.

By Theorem \ref{thm:2}, the Laplace transform
\eqref{def:Lap} converges to the product of two Laplace transforms.
In view of Proposition \ref{prop:duality summary}, we recognize the Laplace transform of the desired limit.
To conclude the proof, we invoke the fact  that convergence of Laplace transforms of probability measures
to a Laplace transform of a probability measure on an open set of arguments $\vv c$ implies weak convergence of measures, \cite[Theorem A.1]{bryc19limit}. This proves convergence of finite dimensional distributions.

When $\A,\C$ are finite, convergence in $D[0,1]$ follows from Proposition \ref{Prop-tightness}.
\end{proof}
It remains to prove Theorem \ref{thm:2} and  Proposition \ref{prop:duality summary}.

\subsection{Proof of Theorem \ref{thm:2}}\label{sec:Proof:T2}
We start by expressing the Laplace transform
\begin{align*}
\varphi_{\vv x,n}(\vv c)
& = \aa{\exp\pp{-\summ k1d\sum_{j=\floor{nx_{k-1}}+1}^{\floor{nx_k}}\pp{2\tau_j-1}(c_k+\cdots+c_d)}}_n
\end{align*}
in terms of   the Askey--Wilson process   $(Y\topp n_t)_{t\geq 0}$  with parameters $A_n,B_n,C_n,D_n,q$, which has marginal  laws \eqref{eq:AW marginal} and   transition probabilities  \eqref{eq:AW transition}. %
Note that the range  of the legitimate time index of the Askey--Wilson process $Y\topp n$ depends on the parameters, and under Assumption \ref{assump:1}, the domain $[0,\infty)$ follows from \cite[
Eq.~(1.21)]{bryc10askey}.

Write
 \begin{equation*}\label{eq:s_k}
 s_k: = c_k+\cdots+c_d,   \quad  k=1,\dots,d, \quad \mand \quad n_k :=\sfloor{n{x_k}}, \quad  k=0,1,\dots, d.
 \end{equation*}
(Recall that  $0=x_0<x_1<x_2<\dots<x_d=1$.)

By \eqref{eq:BW} we have,
\begin{align*}
\varphi_{\vv x,n}(\vv c) %
& =  \exp\pp{\summ k1d  s_k(n_k-n_{k-1})}\aa{{\prodd k1{d}\prod_{j=n_{k-1}+1}^{n_k}(e^{-2 s_k})^{\tau_j}}}_n\\
& = \exp\pp{\summ k1d  s_k(n_k-n_{k-1})}\frac{\esp\bb{\prodd k1{d}(1+e^{-2 s_k}+2e^{- s_k}Y_{e^{-2 s_k}}\topp n )^{n_k-n_{k-1}}}}{2^n\esp(1+Y_1\topp n)^n}\\
& = \frac{\esp\bb{\prodd k1{d} \spp{\cosh(s_k)+Y_{e^{-2s_k}}\topp n}^{n_k-n_{k-1}}}}{\esp(1+Y_1\topp n)^n}.
\end{align*}
Then,
\begin{equation}\label{eq:Laplace}
\varphi_{\vv x,n}\pp{\frac{\vv c}{\sqrt n}} = \frac{\esp\bb{\prodd k1{d} \pp{\cosh(s_k/\sqrt n)+Y\topp n_{e^{-2s_k/\sqrt n}}}^{n_k-n_{k-1}}}}{\esp(1+Y_1\topp n)^n}.
\end{equation}
We shall consider separately the numerator and the denominator.

\subsubsection{Asymptotics of the numerator in \protect{\eqref{eq:Laplace}}}
To analyze the numerator we first denote
\begin{equation*}
    t_{k,n}=e^{-2 s_k/\sqrt{n}}, k=1,\dots, d.
\end{equation*}
It follows from  \eqref{eq:a+c} that for $n$ large enough,
\begin{equation}
\label{eq:A<1}
  A_n\sqrt{t_{k,n}}<1,\; |B_n\sqrt{t_{k,n}}| <1,\;C_n/\sqrt{t_{k,n}}<1,
\end{equation}
 and hence they  do not create atoms for the  marginal law \eqref{eq:AW marginal} at time $t=t_{k,n}$.

However $D_n$ can create atoms,  or more precisely $-D_n/\sqrt{t_{k,n}}>1$ is possible if $D=-1$.

In this case, the atom of the marginal law of %
$Y\topp n_{t_{k,n}}$ is created by $D_n$ with value
\begin{equation*}
\frac12\pp{D_n/\sqrt{t_{k,n}}+\frac1{D_n/\sqrt{t_{k,n}}}} =\frac{(1+D_n/\sqrt{t_{k,n}})^2}{2D_n/\sqrt{t_{k,n}}} -1 <-1.
\end{equation*}
 Consequently, since $D_n/\sqrt{t_{k,n}}\to -1$ when $D=-1$, for a fixed $\delta\in(0,1)$ and large enough $n$, we have  %
 \begin{equation}\label{eq:Y-range}
     Y\topp n_{t_{1,n}} \in (-1-\delta, 1].
 \end{equation}
 \arxivc{
 Indeed, $D_n\to D=-1$ and $t_{k,n}\to 1$}
In view of the first two inequalities in \eqref{eq:A<1}, we also do not have atoms
  in the  transition probabilities  \eqref{eq:AW transition} from $s=t_{k,n}$ to $t=t_{k+1,n}$, when starting at  $y\in [-1,1]$.

Using \eqref{eq:Y-range}, for large enough $n$ we write
\begin{equation}\label{eq:I1+I2}
\esp  \bb{\prodd k1{d} \pp{\cosh(s_k/\sqrt n)+Y\topp n_{e^{-2s_k/\sqrt n}}}^{n_k-n_{k-1}}}  = I_{n,1} + I_{n,2}
\end{equation}
with %
\begin{align*}I_{n,1} & :=
\esp\bb{\ind_{Y\topp n_{t_{1,n}}\in(-1-\delta,0)}\prodd k1{d} \pp{\cosh(s_k/\sqrt n)+Y\topp n_{e^{-2s_k/\sqrt n}}}^{n_k-n_{k-1}}},
\\
I_{n,2}& :=\esp\bb{\ind_{Y\topp n_{t_{1,n}}\in[0,1]}\prodd k1{d} \pp{\cosh(s_k/\sqrt n)+Y\topp n_{e^{-2s_k/\sqrt n}}}^{n_k-n_{k-1}}}.
\end{align*}

We first analyze $I_{n,2}$,
which is a multivariate integral with $Y\topp n_{t_{1,n}}\in[0,1]$.
We introduce the process
\begin{equation*}\label{eq:wtY_n}
\wt Y\topp n_t:=2n\pp{1-Y_{e^{2t/\sqrt n}}\topp n},\qquad  t\in(-\infty,\infty).
\end{equation*}
 Since $Y\topp n_{t_{1,n}}\in[0,1]$, therefore $Y\topp n_{t_{k,n}}\in[-1,1]$, $k=2,\dots,d$. Hence,
we have $\wt Y\topp n_{t_{1,n}}\in[0,2n]$ and $\wt Y\topp n_{t_{k,n}}\in[0,4n]$, $k=2,\dots,d$.

 We shall   use the notations $\wt \pi_t\topp n$ ($\pi_t\topp n$ respectively),
for the absolutely continuous parts of the marginal   laws   of $\wt Y\topp n$ ($Y\topp n$ respectively)  which are supported on  $[0,4n]$ ($[-1,1]$ respectively).

Recalling \eqref{eq:f}  we write
\[
\wt \pi_t\topp n (y)=\frac{1}{2n}\pi_{e^{2t/\sqrt{n}}}\topp n\left(1-\frac{y}{2n}\right) \mbox{ with }
\pi_t\topp n(y)= f(y; A_n \sqrt{t},B_n\sqrt{t},C_n/\sqrt{t},D_n/\sqrt{t},q).
\]

Using this density (which may be sub-probabilistic) and \begin{align}
G_{\vvx,\vvc,n}(\vv u)& := 2^{-n}\prodd k1{d} \pp{\ind_{[0,4n]}(u_k)\pp{\cosh\pp{\frac{s_k}{\sqrt n}}+1-\frac{u_k}{2n}}^{n_k-n_{k-1}} }\label{**G**}
\\
& = \prodd k1{d}\pp{\ind_{[0,4n]}(u_k)\pp{1+\sinh^2\pp{\frac{s_k}{2\sqrt n}} - \frac {u_k}{4n}}^{n_k-n_{k-1}}},\label{eq:Gxcn}
\end{align}

we   can   write
\begin{equation}\label{eq:I_2}
    \frac{I_{n,2}}{2^n}=\int \ind_{[0,2n]}(u)
     \esp\pp{G_{\vvx,\vvc,n}(u,\wt Y_{-s_2}\topp n,\dots,\wt Y_{-s_d}\topp n)\mmid \wt Y_{-s_1}\topp n = u} \wt \pi_{-s_1}\topp n (u)\, du.
\end{equation}
\arxivc{Indeed, using \eqref{**G**} we see that
\eqref{eq:I1+I2} says that
\begin{align*}
    I_{n,2} & =\esp\bb{\ind_{Y\topp n_{t_{1,n}}\in[0,1]}\prodd k1{d} \pp{\cosh(s_k/\sqrt n)+Y\topp n_{e^{-2s_k/\sqrt n}}}^{n_k-n_{k-1}}}
    \\& = 2^n  \esp\bb{\ind_{[0,2n]}(\wt Y_{-s_1}\topp n) G_{\vvx,\vvc,n}(\wt Y_{-s_1}\topp n,\wt Y_{-s_2}\topp n,\dots,\wt Y_{-s_d}\topp n) }.
\end{align*}
}
In the next two lemmas we   address the asymptotics of the density $\wt\pi_{t}\topp n$ and of the transition probability density for the process $(\wt Y\topp n_t)$.
\begin{lemma}\label{lem:1}
Under Assumption \ref{assump:1}, for every $-\C<t<\A$ we have the following.

(i) Pointwise convergence: for every $u\geq 0$, %
\begin{equation}\label{eq:pi}
\limn\frac{\wt \pi\topp n_t(u)}{\mathsf \Pi_n}=  \begin{cases}
\displaystyle
\frac{\sqrt u}{((\A-t)^2+u)((\C+t)^2+u)}, & \mbox{ if } \A,\C\in\R,\\\\
\displaystyle \frac{\sqrt u}{(\A-t)^2+u}, & \mbox{ if } \A \in\R,\C = \infty,\\\\
\displaystyle \sqrt u, & \mbox{ if } \A =\C= \infty, \end{cases}
\end{equation}
with
\begin{equation}\label{eq:Pi_n}
\mathsf \Pi_n = \frac{1}{\pi}\frac{1-B_nD_n}{1-A_nB_nC_nD_n}\cdot \begin{cases}
\displaystyle \A+\C , & \mbox{ if } \A,\C\in\R,\\\\
\displaystyle\frac1{\sqrt n(1-C_n)}, & \mbox{ if } \A\in\R, \C = \infty, \\\\%C = 1,\\\\
\displaystyle \frac{1-A_nC_n}{n^{3/2}(1-A_n)^2(1-C_n)^2}, & \mbox{ if } \A=\C = \infty. %
\end{cases}
\end{equation}
(ii)  Uniform bound:
There are constants $N$,    $K>0$ (possibly depending on $t$)   such that for all  $n>N$ and all $u\geq 0$,
\begin{equation}
    \label{eq:pi upper}
    \ind_{u\in[0,2 n]}\frac{\wt\pi_t\topp n(u)}{\mathsf \Pi_n}\leq  K  \sqrt{u}.
\end{equation}
\end{lemma}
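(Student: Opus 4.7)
The plan is to work directly from the explicit Askey--Wilson density formula \eqref{eq:f}, applied with parameters $(a_n,b_n,c_n,d_n) = (A_n e^{t/\sqrt n},\, B_n e^{t/\sqrt n},\, C_n e^{-t/\sqrt n},\, D_n e^{-t/\sqrt n})$, corresponding to the Askey--Wilson time $t_n = e^{2t/\sqrt n}$. The substitution $y = 1 - u/(2n)$ gives $1-\cos\theta_y = u/(2n)$ and $\sqrt{1-y^2} = \sqrt{u/n}\,\sqrt{1-u/(4n)}$, with the latter square root in $[1/\sqrt 2, 1]$ throughout $u\in[0,2n]$. Using the factorization $f = (2/\pi)\,J\,R$ from \eqref{eq:J}--\eqref{eq:R}, one has $\wt\pi_t^{(n)}(u) = J R /(n\pi)$. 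The factor $R$ consists of $q$-Pochhammer symbols whose arguments stay bounded away from $1$ in modulus (with $q\in[0,1)$ fixed), so $R$ remains uniformly bounded and in fact converges pointwise to $1$ after full cancellation of the eight infinite products.

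The key identity is $|1-\alpha e^{i\theta_y}|^2 = (1-\alpha)^2 + \alpha\,u/n$ applied to each $\alpha\in\{a_n,b_n,c_n,d_n\}$. Under Assumption \ref{assump:1} and $-\C<t<\A$, one has $\sqrt n(1-a_n) \to \A - t > 0$ and $\sqrt n(1-c_n) \to \C + t > 0$ (with the obvious interpretation when a parameter is $\infty$), while $b_n \to B$ and $d_n \to D$ stay in $[-1,0]$. Hence pointwise $|1 - a_n e^{i\theta_y}|^2 \sim ((\A-t)^2 + u)/n$ and $|1 - c_n e^{i\theta_y}|^2 \sim ((\C+t)^2 + u)/n$ when $\A,\C$ are finite, with $(1-c_n)^2 \sim (1-C_n)^2$ dominating $c_n u/n$ for fixed $u$ when $\C=\infty$, and analogously for $a_n$. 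The other two oscillatory factors tend to $(1-B)^2$ and $(1-D)^2$, which cancel against the four off-diagonal numerator factors $(1-a_n b_n)(1-a_n d_n)(1-b_n c_n)(1-c_n d_n) \to (1-B)^2(1-D)^2$. What remains is the grouping $(1-A_nC_n)(1-B_nD_n)/(1-A_nB_nC_nD_n)$ together with the $\sqrt u/\sqrt n$ coming from $\sqrt{1-y^2}$, and \eqref{eq:pi} follows after dividing by $\mathsf \Pi_n$; the three cases in the definition \eqref{eq:Pi_n} match the three asymptotic behaviors of $1-A_nC_n$, namely $\sim(\A+\C)/\sqrt n$ when $\A,\C$ are finite, $\sim 1-C_n$ when only $\A$ is finite, and $\sim(1-A_n)+(1-C_n)$ when both are infinite.

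For part (ii), the core estimate is the lower bound $|1-a_n e^{i\theta_y}|^2 \geq c_0\,((\A-t)^2 + u)/n$ and $|1-c_n e^{i\theta_y}|^2 \geq c_0\,((\C+t)^2 + u)/n$ (with $(\A-t)^2$, $(\C+t)^2$ replaced by $n(1-a_n)^2$, $n(1-c_n)^2$ in the infinite cases), valid for all $u\geq 0$ and $n$ large; these follow from the exact identity together with $\sqrt n(1-a_n)\geq (\A-t)/2$, $\sqrt n(1-c_n)\geq (\C+t)/2$ and $a_n,c_n\geq 1/2$ eventually. Combined with $|1-b_n e^{i\theta_y}|^2,|1-d_n e^{i\theta_y}|^2 \geq 1$ (from $b_n,d_n\leq 0$) and the boundedness of $R$ and of the four off-diagonal numerator factors, plugging into the same expression used for (i) yields in the finite case
\[
\frac{\wt\pi_t^{(n)}(u)}{\mathsf\Pi_n} \;\leq\; K\,\frac{\sqrt u}{((\A-t)^2+u)((\C+t)^2+u)} \;\leq\; \frac{K}{(\A-t)^2(\C+t)^2}\,\sqrt u,
\]
and analogous estimates in the $\C=\infty$ and $\A=\C=\infty$ cases, where a factor $M = n(1-C_n)^2$ (or its $A$-analogue) appears and is controlled via $M/((\A-t)^2+u)(M+u) \leq 1/(\A-t)^2$. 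The main obstacle is bookkeeping the three $(\A,\C)$ regimes in parallel, since in the infinite cases $(1-c_n)^2$ dwarfs $u/n$ for fixed $u$ but not for $u$ of order $n$; the uniform bound must therefore track the contribution $c_n u/n$ even though it does not appear in the pointwise limit, and this is precisely what the extra $M$ factor in the $\C=\infty$ analysis absorbs.
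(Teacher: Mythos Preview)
Your proposal is correct and follows essentially the same route as the paper: the factorization $f=(2/\pi)JR$, the identity $|1-\alpha e^{i\theta_y}|^2=(1-\alpha)^2+\alpha u/n$, the cancellation of the $(1-B)^2(1-D)^2$ factors, and the case-by-case asymptotics of $1-A_nC_n$ and of the $a_n$- and $c_n$-denominators. One point to tighten: the lower bound $|1-b_n e^{i\theta_y}|^2\geq 1$ does not follow from $b_n\leq 0$ alone; writing it as $1-2b_n x_n+b_n^2$ you also need $x_n=1-u/(2n)\geq 0$, i.e.\ $u\in[0,2n]$, which is exactly why the indicator $\ind_{u\in[0,2n]}$ (rather than the full support $[0,4n]$) sits in \eqref{eq:pi upper}.
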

\begin{remark}The exact asymptotic of $\mathsf \Pi_n$ is not needed for the proof of our main theorem, because of a cancellation later in the calculations.
However we need the following consequence of Assumption \ref{assump:1}
\begin{equation}
    \label{eq:Pi-bd}
    \limsup_{n\to\infty} \frac1n \log (1/\mathsf \Pi_n) \leq 0.
\end{equation}

Note that unless $BD=1$, we have $\frac{1-B_nD_n}{1-A_nB_nC_nD_n}\sim 1$,
which further simplifies   expression \eqref{eq:Pi_n}.
\end{remark}
\arxivc{\begin{proof}[Proof of \eqref{eq:Pi-bd}]
We use \eqref{bd:BnDn} to handle the pre-factor $$
\frac{1-A_nB_nC_nD_n}{1-B_nD_n}\leq \frac{1}{1-B_nD_n}.
$$
Similarly, we use the trivial bound  $\sqrt{n}(1-C_n)\leq \sqrt{n}$ for the middle expression with $\C=\infty$ in \eqref{eq:Pi_n}. For the last case,  since $A_n\geq C_n>0$, we have
$$
\frac{(1-A_n)^2(1-C_n)^2}{(1-A_nC_n)}\leq \frac{(1-A_n)^2(1-C_n)^2}{(1-A_n^2)} = \frac{(1-A_n)(1-C_n)^2}{(1+A_n)}\leq \frac{1}{(1+A_n)}\sim 1/2.$$
\end{proof}
}
\begin{proof}[Proof of Lemma \ref{lem:1}]
Consider %
\begin{equation*}\label{x_n and t_n}
  x_n = 1-\frac u{2n},\quad t_n = e^{2t/\sqrt n}.
\end{equation*}
 (Strictly speaking, $t_n$ is a function of $t$.)
Since $(\alpha;q)_\infty=(1-\alpha)(\alpha q;q)_\infty$, from \cite[Eq.~(4.4)]{bryc19limit},
\begin{equation}
\label{eq:pi_tn}
\wt \pi\topp n_t(u)  = \frac1{2n}\pi_{t_n}(x_n; A_n,B_n,C_n,D_n,q)
=\frac1{\pi n} J_n(t,u)  R_n(t,u)
\end{equation}
with
\begin{align*}J_n(t,u) &:= J\pp{1-\frac{u}{2n};A_n\sqrt{t_n},B_n\sqrt{t_n},C_n/\sqrt{t_n},D_n/\sqrt{t_n}},\\
R_n(t,u) & :=R\pp{1-\frac{u}{2n};A_n\sqrt{t_n},B_n\sqrt{t_n},C_n/\sqrt{t_n},D_n/\sqrt{t_n},q},
\end{align*}
where $J$ and $R$ are given by \eqref{eq:J} and \eqref{eq:R}.
Applying  \eqref{1*1} to each factor in
\[
    \label{eq:R_n(t,u)}
R_n(t,u)
=
\frac{\abs{\qp{q e^{2i\theta_{x_n}}}}^2}{\qp{qA_nB_nC_nD_n}}
\cdot \frac{\qp{qA_nB_nt_n, qA_nC_n, qA_nD_n, qB_nC_n, q B_nD_n, q C_nD_n/t_n}}{\abs{\qp{qB_n\sqrt{t_n}e^{i\theta_{x_n}},q D_ne^{i\theta_{x_n}}/\sqrt{t_n},q A_n\sqrt{t_n}e^{i\theta_{x_n}},q C_ne^{i\theta_{x_n}}/\sqrt{t_n}}}^2},
\]
we see that
\begin{equation} \label{eq:Rn}
R_n(t,u)\to 1.
\end{equation}

The explicit formula for $J_n(t,u)$ is
\begin{multline*}\label{eq:J_n-expl}
J_n(t,u)=\frac{ \sqrt{1-x_n^2}}{\abs{(1-A_n\sqrt{t_n}e^{i\theta_{x_n}})(1-C_ne^{i\theta_{x_n}}/\sqrt{t_n})}^2}
\\
\times \frac{(1-A_nB_nt_n)(1- A_nC_n)(1-A_nD_n)(1-B_nC_n)(1- B_nD_n)(1- C_nD_n/t_n)}{(1-A_nB_nC_nD_n)|(1-B_n\sqrt{t_n}e^{i\theta_{x_n}})(1-D_ne^{i\theta_{x_n}}/\sqrt {t_n})|^2}.
\end{multline*}
Since
\[
   \frac{(1-A_nB_nt_n)(1-B_nC_n)(1-A_nD_n)(1- C_nD_n/t_n)}{\left|(1-B_n\sqrt{t_n}e^{i\theta_{x_n}})(1-D_ne^{i\theta_{x_n}}/\sqrt{t_n})\right|^2}
   \sim \frac{(1-B)(1-B)(1-D)(1- D)}{(1-B)^2(1-D)^2} = 1,
\]
these factors  do not contribute to the asymptotics.
 Noting that
$\sqrt{1-x_n^2}\sim \sqrt{u/n}$, we get
\begin{equation*}
 J_n(t,u)\sim \frac{\sqrt{u}}{\sqrt{n}}\cdot  \frac{1-B_nD_n}{1-A_nB_nC_nD_n} \cdot \frac{1-A_nC_n}{
\abs{(1-A_n\sqrt{t_n}e^{i\theta_{x_n}})(1-C_ne^{i\theta_{x_n}}/\sqrt{t_n})}^2}.
\end{equation*}

We observe that
\begin{equation}\label{1-AC}
1-A_nC_n \sim \begin{cases}
\displaystyle\frac{\A+\C}{\sqrt n} , & \mbox{ if } \A,\C\in\R,\\\\
1-C_n,& \mbox{ if } \A\in\R, \C = \infty,
\end{cases}
\end{equation}
(keeping the factor $1-A_nC_n$ unchanged when  $\A=\C = \infty$)
and
\begin{equation}\label{eq:An1}
\abs{1-A_n\sqrt{t_n}e^{i\theta_{x_n}}}^2
 =  (1-A_n\sqrt{t_n})^2 + 2A_n\sqrt{t_n}(1-{x_n})= (1-A_n\sqrt{t_n})^2+A_n\sqrt{t_n} u/n.%
\end{equation}
Thus, %
\arxivc{noting that if $\A=\infty$ then
$$
\frac{1-A_n\sqrt{t_n}}{1-A_n}=1+A_n \frac{\sqrt{n}(1-\sqrt{t_n})}{\sqrt{n}(1-A_n)}\to 1,
$$
we get
}

\[
\abs{1-A_n\sqrt{t_n}e^{i\theta_{x_n}}}^2 \sim
\begin{cases}
\displaystyle \frac{(\A-t)^2+u}n, &\mbox{ if } \A\in\R,\\\\
(1-A_n)^2, & \mbox{ if } \A = \infty.
\end{cases}
\]
Similarly
\[
\abs{1-C_n/\sqrt{t_n}e^{i\theta_{x_n}}}^2 \sim
\begin{cases}
\displaystyle \frac{(\C+t)^2+u}n, &\mbox{ if } \C\in\R,\\\\
(1-C_n)^2, & \mbox{ if } \C = \infty.
\end{cases}
\]
Thus
\begin{equation}
    J_n(t,u)
  \sim  \frac{1-B_nD_n}{1-A_nB_nC_nD_n} \cdot \begin{cases}
\displaystyle
n^{3/2}\cdot  \frac{\sqrt u}{((\A-t)^2+u)((\C+t)^2+u)}, & \mbox{ if } \A,\C\in\R,\\\\
\displaystyle \sqrt n\cdot  \frac1{ (1-C_n) }\cdot\frac{\sqrt u}{(\A-t)^2+u}, & \mbox{ if } \A \in\R,\C = \infty,\\\\
\displaystyle \frac1{\sqrt n}\cdot  \frac{ 1-A_nC_n }{(1-A_n)^2(1-C_n)^2}\cdot\sqrt u, & \mbox{ if } \A =\C= \infty. \end{cases}
\label{eq:Jn}
\end{equation}

Combining \eqref{eq:pi_tn}, \eqref{eq:Rn} and  \eqref{eq:Jn}    yields the desired asymptotic equivalence \eqref{eq:pi} for $\wt\pi\topp n_t$.

Next we prove part (ii).
From \eqref{2*2} with $\alpha_n$ replaced by $q\alpha_n$, and
elementary calculations, like
\[
\sup_{u\in[0,4n]}|1-q^k A_n\sqrt{t_n} e^{i\theta_{x_n}}|^2=- (1-q^k A_n\sqrt{t_n})^2,
\]
compare \eqref{eq:An1}, we  see that the sequence $\sup_{u\in[0,4n]}R_n(t,u)$ converges as $n\to \infty$.
 \arxivc{
 A somewhat more subtle point here is the lower  bound for the factors in the denominator  like
 $$\inf_{u\in[0,4n]}\abs{1-q^k B_n\sqrt{t_n} e^{i\theta_{x_n}}}^2=
 \inf_{u\in[0,4n]}(1-q^k B_n\sqrt{t_n})^2+q^kB_n\sqrt{t_n}\,\frac{u}{n} =
 (1+q^k B_n\sqrt{t_n})^2,$$
 where, since $B_n\leq 0$,  the minimum is attained at $u=4n$. For large enough $n$, this is strictly positive, as $q^k\leq q$ and $B_n>-1$.
 }
 Thus  there exists $N>0$ and a constant $K>0$  (here and in the remaining part of this proof $K$ stands for a generic constant which may change from line to line, may depend on $t$, but is free of $n$ and $u$)   such that
for   all $u\in[0,4n]$ and all $n>N$ we have
 \begin{equation}\label{bd:R+}
  0\leq R_n(t,u)\leq K.
\end{equation}

The bound for $J_n(t,u)$ is slightly more delicate and introduces an additional constraint on $u$. Notice that for all $u\in[0, 2 n]$   (so $x_n\ge 0$),
\begin{equation*}
  \label{lbd:B}
\abs{1-B_n\sqrt{t_n}e^{i\theta_{x_n}}}^2= 1-2 B_n \sqrt{t_n}x_n+B_n^2t_n\geq    1.
\end{equation*}
Similarly,
\begin{equation*}
  \label{lbd:D}|1-D_n e^{i\theta_{x_n}}/\sqrt{t_n}|^2\geq 1,\quad \mbox{for }\quad u\in[0,2n].
\end{equation*}

Note also that $\sqrt{1-x_n^2}\leq \sqrt{u/n}$. Thus, recalling \eqref{eq:An1} and its analogue for $C_n$ we conclude that for $N$ large enough and $n>N$ there exists  $K$ such that
\begin{equation}\label{eq:jntu}
0\leq J_n(t,u)\le K\frac{\sqrt{u}(1-B_nD_n)(1-A_nC_n)}{\sqrt{n}(1-A_nB_nC_nD_n)(1-A_n\sqrt{t_n})^2(1-C_n/\sqrt{t_n})^2}, \quad u\in[0,2n].
\end{equation}

Moreover, due to \eqref{1-AC}, \eqref{eq:An1} and  its analogue for $C_n$ we can increase $N$  to ensure that   for all $n>N$ we have
\begin{equation}\label{eq:ac}
1-A_nC_n\le \left\{\begin{array}{ll} \displaystyle2\frac{\A+\C}{\sqrt{n}}, & \mbox{ if }\;\A,\C<\infty, \\\\
2(1-C_n), & \mbox{ if }\; \A<\infty, \,\C=\infty,\end{array}\right.
\end{equation}
\begin{equation}
  \label{lbd:A+}
  \abs{1-A_n\sqrt{t_n}}^2 \geq  \left\{\begin{array}{ll} \displaystyle \frac{(\A-t)^2}{2n}, & \mbox{ if }\;\A<\infty, \\\\
  \displaystyle\frac{1}{2}(1-A_n)^2, & \mbox{ if }\,\A=\infty,
  \end{array}\right.
\end{equation}
and
\begin{equation}
  \label{lbd:C+}
  \abs{1-C_n/\sqrt{t_n}}^2 \geq  \left\{\begin{array}{ll} \displaystyle \frac{(\C+t)^2}{2n}, & \mbox{ if }\;\C<\infty, \\\\
\displaystyle  \frac{1}{2}(1-C_n)^2, & \mbox{ if }\,\C=\infty.
  \end{array}\right.
\end{equation}

Inserting \eqref{eq:ac}, \eqref{lbd:A+} and \eqref{lbd:C+}  into \eqref{eq:jntu} we thus obtain
\begin{equation}\label{bd:J+}
\ind_{u\in[0,2n]} J_n(t,u)\leq   K\sqrt{u}\frac{1-B_nD_n}{1-A_n B_n C_n D_n}\left\{\begin{array}{ll} \displaystyle\frac{n}{(\A-t)^2(\C+t)^2}, & \mbox{ if }\A,\C<\infty,\\\\
 \displaystyle\frac{\sqrt{n}}{(\A-t)^2(1-C_n)}, & \mbox{ if }\A<\infty,\;\C=\infty,\\\\
 \displaystyle\frac{1-A_n C_n}{\sqrt{n}\,(1-A_n)^2(1-C_n)^2}, & \mbox{ if } \A=\C=\infty.\end{array}\right.
\end{equation}

Recalling \eqref{eq:Pi_n} we see that \eqref{bd:J+} yields $J_{n}(t,u)\le K\,\mathsf \Pi_n\, n\sqrt{u}$ for all $u\in[0,2n]$, which, in view of \eqref{bd:R+} and \eqref{eq:pi_tn},  proves \eqref{eq:pi upper}.
\end{proof}

Next we address the asymptotics of the transition densities of $\wt Y\topp n$. %
 We shall   use the notations $ \wt p\topp n_{s,t}$ and $ p\topp n_{s,t}$,
for the density of the
 transition probabilities   of $\wt Y\topp n$ and $Y\topp n$, respectively. These densities are supported on  the intervals $[0,4n]$ and $[-1,1]$ respectively.

\begin{lemma}\label{lem:2}
Under Assumption \ref{assump:1} we have, for all $s,t$ such that $s<t<\A$, %
\begin{equation}
\lim_{n\to\infty}\wt p_{s,t}\topp n(u,v)
=
\begin{cases}\displaystyle\p_{t-s}(u,v)\frac{(\A-s)^2+u}{(\A-t)^2+v}, &  \mbox{ if } \A<\infty,\\ \\
\p_{t-s}(u,v), &  \mbox{ if } \A=\infty,
\end{cases}\label{eq:pst}
\end{equation}
and $\p_t(x,y)$ is the transition probability density function of the $1/2$-stable Biane process as in \eqref{eq:Biane p}.
 \end{lemma}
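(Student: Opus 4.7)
The plan is to parallel the proof of Lemma \ref{lem:1}, now applied to the Askey--Wilson transition density \eqref{eq:AW transition} rather than the marginal. First I would set $s_n := e^{2s/\sqrt n}$, $t_n := e^{2t/\sqrt n}$ (so $s_n < t_n$ for large $n$) and introduce the scaled states $y := 1-u/(2n)$, $z := 1-v/(2n)$ with associated angles $\theta := \theta_y \sim \sqrt{u/n}$ and $\phi := \theta_z \sim \sqrt{v/n}$. The Jacobian of the change of variables gives
\[
\wt p_{s,t}^{(n)}(u,v) = \frac{1}{2n}\, f\bigl(z;\, a, b, c, d, q\bigr),
\]
with Askey--Wilson parameters $a = A_n\sqrt{t_n}$, $b = B_n\sqrt{t_n}$, $c = \sqrt{s_n/t_n}\,e^{i\theta}$, $d = \overline{c}$, coming from \eqref{eq:AW transition} evaluated between times $s_n$ and $t_n$ and at $y$, $z$. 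The hypothesis $s < t < \A$ ensures $a, b \in (-1,1)$ and $|c|=|d|=\sqrt{s_n/t_n} < 1$ for $n$ large, so there are no atoms and the density \eqref{eq:f} applies. I would then factor $f = (2/\pi)\, J_n R_n$ as in \eqref{eq:J}--\eqref{eq:R}.

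The next step is to show $R_n \to 1$: every $q$-Pochhammer symbol in $R_n$ has the form $(q\alpha;q)_\infty$ with $\alpha$ approaching either $1$ or $B$, and a direct counting shows that the numerator and denominator both converge to $(q;q)_\infty^6\,(qB;q)_\infty^3$. The heart of the proof is then the asymptotic analysis of $J_n$. The key cancellation is that all $B$-dependent factors cancel in the limit: the numerator product $(1-A_nB_nt_n)\,|1-B_n\sqrt{s_n}e^{i\theta}|^2$ and the denominator product $(1-A_nB_ns_n)\,|1-B_n\sqrt{t_n}e^{i\phi}|^2$ both converge to $(1-B)^3$. The remaining "diagonal" contributions satisfy $(1-cd) = 1 - s_n/t_n \sim 2(t-s)/\sqrt n$, $|1-ce^{i\phi}|^2|1-de^{i\phi}|^2 \sim [(t-s)^2 + (\sqrt u - \sqrt v)^2][(t-s)^2 + (\sqrt u + \sqrt v)^2]/n^2$, and $\sqrt{1-z^2} \sim \sqrt{v/n}$. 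Using the identity
\[
(t-s)^4 + 2(t-s)^2(u+v) + (u-v)^2 = \bigl[(t-s)^2 + (\sqrt u - \sqrt v)^2\bigr]\bigl[(t-s)^2 + (\sqrt u + \sqrt v)^2\bigr],
\]
these factors together with the prefactor $(2/\pi)(1/(2n))$ reproduce the Biane kernel $\p_{t-s}(u,v)$ of \eqref{eq:Biane p}.

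Finally I would treat the $A$-dependent factors, which are the source of the correction in \eqref{eq:pst}. From $1 - A_n\sqrt{s_n} = (1-A_n) - A_n(\sqrt{s_n}-1)$ and $\sqrt{s_n}-1 \sim s/\sqrt n$, one obtains $1 - A_n\sqrt{s_n} \sim (\A-s)/\sqrt n$ when $\A$ is finite, hence $(1-ac)(1-ad) = |1-A_n\sqrt{s_n}e^{i\theta}|^2 \sim ((\A-s)^2+u)/n$, and analogously $|1-ae^{i\phi}|^2 \sim ((\A-t)^2+v)/n$. Their ratio then produces the correction factor $\frac{(\A-s)^2+u}{(\A-t)^2+v}$. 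When $\A=\infty$, the term $1-A_n$ dominates $(\sqrt{s_n}-1)$, both numerator and denominator are of size $(1-A_n)^2$, and the ratio tends to $1$, which is the second case of \eqref{eq:pst}.

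The main obstacle is the bookkeeping: one must carefully track the many factors, distinguishing which converge to positive constants versus which vanish at controlled rates, and verifying that the $A$-asymmetry produces the correction factor while the $B$-symmetry allows complete cancellation. The asymmetry originates in the fact that, unlike in the marginal law (where $B_n$ and $D_n$ appear without time scaling), the transition-kernel parameters $a = A_n\sqrt{t_n}$ and $c = \sqrt{s_n/t_n}e^{i\theta}$ depend asymmetrically on the endpoints $s$ and $t$; this is precisely why the $A$-factors cannot cancel and why the limit is consistent with the required Chapman--Kolmogorov relation $\int \wt p_{s,t}^{(n)}(u,v)\,\wt\pi_s^{(n)}(u)\,du = \wt\pi_t^{(n)}(v)$ connecting Lemma \ref{lem:2} with Lemma \ref{lem:1}.
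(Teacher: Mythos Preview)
Your proposal is correct and follows essentially the same approach as the paper: the same change of variables, the same factorization $f=(2/\pi)J_nR_n$, the same argument that $R_n\to 1$ via the $q$-Pochhammer limits, and the same grouping of the $J_n$ factors into $B$-dependent terms that cancel, diagonal terms producing the Biane kernel, and $A$-dependent terms producing the correction $\frac{(\A-s)^2+u}{(\A-t)^2+v}$ (with the ratio tending to $1$ when $\A=\infty$). Your explicit statement of the factorization identity for $(t-s)^4+2(t-s)^2(u+v)+(u-v)^2$ and your closing remark on why the $A$-factors cannot cancel are helpful commentary but do not change the argument.
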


\begin{proof}
Consider $x_n = 1-u/(2n), y_n = 1-v/(2n)$.  Recall $s_n<t_n = e^{2t/\sqrt n}$, and hence  $A_n\sqrt {s_n}<A_n\sqrt{t_n}\le 1$  eventually
 by assumption as $s<t<\A$.
 In this case definition \eqref{eq:AW  transition}   gives  the following transition probability density
of the Askey--Wilson process:

\begin{equation}\label{*p*}
   \wt p_{s,t}\topp n(u,v)=  \frac{1}{2n} p_{s_n,t_n}\topp n(x_n,y_n)
=\frac{1}{\pi n}J_n(s,t,u,v)R_n(s,t,u,v),
\end{equation}
where
\begin{align*}
  J_n(s,t,u,v  )&=J\pp{y_n;A\sqrt{t_n},B_n\sqrt t_n, \sqrt{\frac{s_n}{t_n}}\,e^{i\theta_{x_n}},\sqrt{\tfrac{s_n}{t_n}}\,e^{-i\theta_{x_n}}},\\
 R_n(s,t,u,v) &= R\pp{y_n;A\sqrt{t_n},B_n\sqrt t_n, \sqrt{\frac{s_n}{t_n}}\,e^{i\theta_{x_n}},\sqrt{\tfrac{s_n}{t_n}}\,e^{-i\theta_{x_n}},q},
 \end{align*}
with functions  $J$ and $R$ introduced in \eqref{eq:J} and \eqref{eq:R}.

Similarly as in the proof of Lemma \ref{lem:1}, by  \eqref{1*1} we see that
$ R_n(s,t,u,v)\to 1$.

Next we consider
\newcommand{\qpP}[1]{1-#1}
\begin{multline*}
    J_n(s,t,u)=\frac{1-A_nB_nt_n}{1-A_nB_ns_n}\cdot\frac{|1-B_n\sqrt{s_n}e^{i\theta_{x_n}}|^2}{|1-B_n\sqrt{t_n}e^{i\theta_{y_n}}|^2} \\
    \times
    \frac{|1-A_n\sqrt{s_n}e^{i\theta_{x_n}}|^2}
{|\qpP{A_n\sqrt{t_n}e^{i\theta_{y_n}}}|^2} \cdot
\frac{\sqrt{1-y_n^2}(\qpP{s_n/t_n})}{\left|(\qpP{\sqrt{s_n/t_n}e^{i(\theta_{x_n}+\theta_{y_n})}})(\qpP{\sqrt{s_n/t_n}e^{i(-\theta_{x_n}+\theta_{y_n})}})\right|^2}.
\end{multline*}
Note that
\[
\limn\frac{1-A_nB_nt_n}{1-A_nB_ns_n}\cdot\frac{|1-B_n\sqrt{s_n}e^{i\theta_{x_n}}|^2}{|1-B_n\sqrt{t_n}e^{i\theta_{y_n}}|^2}  = 1,
\]
and
by \eqref{eq:An1}, we have
\begin{equation*}\label{eq:An}
\limn\frac{\abs{1-A_n\sqrt{s_n}e^{i\theta_{x_n}}}^2}{
\abs{1-A_n\sqrt{t_n}e^{i\theta_{y_n}}}^2}
 = \frac{(\A-s)^2+u}{(\A-t)^2+v},
\end{equation*}
where  the ratio is understood as 1 if $\A = \infty$.

Moreover, as in \cite[P.~2185]{bryc19limit}, we have
\begin{align*}
  |(\qpP{\sqrt{s_n/t_n}e^{i(\pm \theta_{x_n}+\theta_{y_n})}}) |^2 & =  1+t_n/s_n - 2\sqrt{t_n/s_n}\pp{x_ny_n \mp  \sqrt{1-x_n^2}\sqrt{1-y_n^2}}
 \\& \sim \frac1n\bb{(t-s)^2+(\sqrt u\pm \sqrt v)^2}.
\end{align*}
Since $\sqrt{1-y_n^2}(\qpP{s_n/t_n}) \sim 2(t-s)\sqrt{v}/n$, we get
\begin{equation*}
  J_n(s,t,u)\sim  2n  \frac{(t-s)\sqrt{v}}{((t-s)^2+(\sqrt u+ \sqrt v)^2)((t-s)^2+(\sqrt u- \sqrt v)^2)} \cdot \frac{(\A-s)^2+u}{(\A-t)^2+v}.
\end{equation*}
Recalling \eqref{*p*} and \eqref{eq:Biane p}, the desired limit \eqref{eq:pst} now follows.
\end{proof}
To determine the asymptotic of $I_{n,2}/2^n$ in \eqref{eq:I_2}, we need the  following lemma
about the limiting  properties of  function $G_{\vvx,\vvc,n}$ defined in \eqref{**G**}.
\begin{lemma}\label{lem:3}
If  Assumption \ref{assump:1}
and \eqref{eq:a+c} hold,
then %
\equh
    \label{eq:num_limit}
  \limn \frac1{\mathsf \Pi_n}\int_0^{2n}
\esp\pp{G_{\vvx,\vvc,n}(u,\wt Y_{-s_2}\topp n,\dots,\wt Y_{-s_d}\topp n)\mmid \wt Y_{-s_1}\topp n = u}
\wt \pi_{-s_1}\topp n (u) du
= \exp\pp{\frac14\summ k1{d}s_k^2(x_k-x_{k-1})} \Psi_\vvx\topp{\A,\C}(\vvc),
\eque
where $\Psi_\vvx\topp{\A,\C}(\vvc) $ is defined in \eqref{PsiAC} and \eqref{eq:Gx1}.

\end{lemma}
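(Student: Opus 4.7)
The plan is to realize the conditional expectation as an iterated integral via the Markov property of the Askey--Wilson process $\wt Y\topp n$ and then apply the dominated convergence theorem, using the pointwise asymptotics from Lemmas~\ref{lem:1} and~\ref{lem:2} together with uniform counterparts.

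First, under \eqref{eq:a+c} the times $-s_1<-s_2<\cdots<-s_d<\A$ are strictly increasing, and on the event $\wt Y\topp n_{-s_1}=u_1\in[0,2n]$ the subsequent values $\wt Y\topp n_{-s_k}$, $k\ge 2$, lie in $[0,4n]$ with evolution governed by the absolutely continuous part of the Askey--Wilson transition kernel, as discussed around \eqref{eq:Y-range} and \eqref{eq:I_2}. Hence the LHS of \eqref{eq:num_limit} rewrites as a $d$-fold integral
\[
\int_{[0,2n]\times [0,4n]^{d-1}} G_{\vvx,\vvc,n}(\vvu)\,\frac{\wt\pi_{-s_1}\topp n(u_1)}{\mathsf \Pi_n}\prod_{k=1}^{d-1}\wt p\topp n_{-s_k,-s_{k+1}}(u_k,u_{k+1})\,d\vvu.
\]

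For the pointwise limit of the integrand I would combine three ingredients. The elementary expansions $\sinh^2(s_k/(2\sqrt n))\sim s_k^2/(4n)$ and $n_k-n_{k-1}\sim n(x_k-x_{k-1})$ applied in \eqref{eq:Gxcn} give convergence of $G_{\vvx,\vvc,n}(\vvu)$ to $\exp\bb{\tfrac14\sum_k s_k^2(x_k-x_{k-1})}\cdot\exp\bb{-\tfrac14\sum_k u_k(x_k-x_{k-1})}$. Lemma~\ref{lem:1} with $t=-s_1$ handles the marginal factor, and Lemma~\ref{lem:2} with $(s,t)=(-s_k,-s_{k+1})$ (so $t-s=c_k$) handles each transition kernel. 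The crucial observation is that the $\A$-dependent factors from Lemma~\ref{lem:2} form a telescoping product
\[
\frac{1}{(\A+s_1)^2+u_1}\prod_{k=1}^{d-1}\frac{(\A+s_k)^2+u_k}{(\A+s_{k+1})^2+u_{k+1}} = \frac{1}{(\A+c_d)^2+u_d},
\]
which cancels the $u_1$-factor contributed by the marginal density and leaves precisely the integrand defining $\Psi_\vvx\topp{\A,\C}(\vvc)$ in \eqref{PsiAC}, together with the prefactor $\exp\bb{\tfrac14\sum_k s_k^2(x_k-x_{k-1})}$. The cases $\A\in\RR,\C=\infty$ and $\A=\C=\infty$ proceed analogously using the corresponding branches of Lemmas~\ref{lem:1} and~\ref{lem:2}; no telescoping is required when $\A=\infty$.

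The technical heart of the argument is the construction of an $n$-uniform integrable dominating function on $\R_+^d$. The bound \eqref{eq:pi upper} handles the marginal. For each transition kernel I would adapt the proof of Lemma~\ref{lem:1}(ii) to the two-time representation of $\wt p\topp n_{s,t}$ from the proof of Lemma~\ref{lem:2}: the infinite $q$-Pochhammer factor $R_n(s,t,u,v)$ is bounded uniformly on $[0,4n]^2$, the factors $|1-B_n\sqrt{t_n}e^{i\theta}|^2$ and $|1-D_ne^{i\theta}/\sqrt{t_n}|^2$ are bounded below by positive constants on $[0,4n]$ just as in the proof of Lemma~\ref{lem:1}(ii), and the analogue of \eqref{lbd:A+} at both times yields an upper bound of the shape $\wt p\topp n_{-s_k,-s_{k+1}}(u_k,u_{k+1})\le K\,\p_{c_k}(u_k,u_{k+1})\bb{(\A+s_k)^2+u_k}/\bb{(\A+s_{k+1})^2+u_{k+1}}$ whose $\A$-factors telescope in the same fashion as in the pointwise limit. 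Combined with the exponential envelope $G_{\vvx,\vvc,n}(\vvu)\le K\exp\bb{-\tfrac14\sum_k u_k(x_k-x_{k-1})}$ obtained from $1+x\le e^x$ in \eqref{eq:Gxcn}, this produces an envelope that is integrable against the polynomial factor $\sqrt{u_1}$ and the Biane densities $\p_{c_k}$ on $\R_+^d$. The main obstacle I expect is precisely this domination step: the integration domain $[0,2n]\times[0,4n]^{d-1}$ grows with $n$, so the transition-kernel bounds must be uniform on a moving domain, and the $\A$-factors must be arranged so that their telescoping survives inside the upper bound, which is what ultimately makes the $d$-dimensional envelope integrable.
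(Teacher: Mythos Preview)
Your pointwise computation of the integrand's limit and the telescoping identification of $\Psi_\vvx\topp{\A,\C}$ are correct and match the paper. The gap is exactly where you anticipate it: the domination step, and the specific fix you propose does not work as stated.

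Your envelope for the transition kernel requires the factor $|1-B_n\sqrt{t_{k+1,n}}e^{i\theta_{y_n}}|^{-2}$ in the denominator of $J_n(s,t,u,v)$ to be bounded above uniformly for $u_{k+1}\in[0,4n]$. You claim this follows ``just as in the proof of Lemma~\ref{lem:1}(ii)'', but that bound was derived only on $[0,2n]$ precisely because $x_n\ge 0$ is needed for $|1-B_n\sqrt{t}e^{i\theta_{x_n}}|^2\ge 1$. On $[0,4n]$ one has $y_n\in[-1,1]$, and near $y_n=-1$ the factor equals $(1+B_n\sqrt{t_{k+1,n}})^2+2|B_n|\sqrt{t_{k+1,n}}(1+y_n)$, which tends to $0$ when $B_n\to -1$; this is permitted under Assumption~\ref{assump:1}, with no rate control on $1+B_n$ in general. (There is also no $D_n$-factor in the transition density \eqref{eq:AW transition} at all, so that part of your argument is moot.) Thus the claimed bound $\wt p\topp n_{-s_k,-s_{k+1}}(u_k,u_{k+1})\le K\,\p_{c_k}(u_k,u_{k+1})\cdot[\text{telescoping}]$ is not established on the relevant domain.

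The paper sidesteps this entirely by \emph{not} bounding the transition densities. For fixed $u_1$ it treats $\mu_{u_1}\topp n(du_2,\dots,du_d)=\prod_{k=1}^{d-1}\wt p_{-s_k,-s_{k+1}}\topp n(u_k,u_{k+1})\,du_2\cdots du_d$ as a sequence of \emph{probability} measures; Lemma~\ref{lem:2} plus Scheff\'e's lemma gives $\mu_{u_1}\topp n\Rightarrow\mu_{u_1}$. Since $G_{\vvx,\vvc,n}$ is uniformly bounded by \eqref{eq:G upper} and satisfies $G_{\vvx,\vvc,n}(\vvu_n)\to G_{\vvx,\vvc}(\vvu)$ whenever $\vvu_n\to\vvu$, a consequence of \cite[Theorem~5.5]{billingsley1968} yields pointwise convergence of the conditional expectation $\mathsf G_n(u_1)\to\mathsf G(u_1)$. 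Only then is the dominated convergence theorem applied, in the single variable $u_1\in[0,2n]$, using the marginal bound \eqref{eq:pi upper} and the exponential decay in $u_1$ from \eqref{eq:G upper}. The point is that total mass $1$ of the inner kernels replaces any uniform density estimate.
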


\begin{proof} %
 Since $\sinh^2 x=x^2+O(x^4)$ as $x\to 0$, we have
\[
\pp{1+\sinh^2\pp{\frac{s}{2\sqrt n}} - \frac {u}{4n}}^n \to e^{(s^2-u)/4}.
\]

Since $(n_k-n_{k-1})/n\to x_k-x_{k-1}$, from \eqref{eq:Gxcn} we get
\begin{equation*}\label{eq:Gxc}
\limn G_{\vvx,\vvc,n}(\vv u) =\exp\pp{\frac14\summ k1{d}(s_k^2-u_k)(x_k-x_{k-1})}=: G_{\vvx,\vvc}(\vv u)
\end{equation*}
 for $\vv u \in [0,\infty)^d$.

Adapting  \cite[proof of Proposition 4.7, (4.24)]{bryc19limit} we also have
\begin{equation}\label{eq:G upper}
G_{\vvx,\vvc,n}(\vvu) \le K \prodd k1d\exp\pp{-\frac{n_k-n_{k-1}}{4n}u_k} \mbox{ for all } \vv u \in\RR_+^d ,
\end{equation}
where  $K$   is a constant independent of $n$.
We will use the following fact which follows from
\cite[Theorem 5.5]{billingsley1968}.

\begin{lemma} %
Suppose a sequence  $\mu\topp n$ of probability measures on  $\RR_+^d$  converges weakly, $\mu\topp n\Rightarrow\mu$. Let  $G_{n}$ be a   sequence of uniformly bounded measurable functions, $G_n:\RR_+^d\to [-K,K]$ for some $K>0$,   such that for all $\vvu_n\to \vv u\in\R_+^d$ we have
\[\limn G_{n}(\vv u_n) = G_{}(\vv u).%
\]
Then  we have
\begin{equation}\label{Bil:Ex6.6}
\limn \int G_n(\vv u)\mu\topp  n(d \vv u)= \int G(\vv u)\mu(d \vv u).
\end{equation}
\end{lemma}
\arxivc{\begin{proof}
By \cite[Theorem 5.5]{billingsley1968}, for every bounded continuous function $f$ on $\RR$ we have
\[\int f(G_n(\vv u))\mu_n(d \vv u)\to \int f(G(\vv u))\mu(d \vv u).\]
Now take bounded continuous function such that $f(x)=x$ on $[-K,K]$.
\end{proof}}

From \eqref{eq:G upper} we see that functions $ G_{\vvx,\vvc,n}$ are uniformly bounded in $\vv u\in\RR_+^d$. We also note that
\[
\limn G_{\vvx,\vvc,n}(\vv u_n) = G_{\vvx,\vvc}(\vv u), \quad\mbox{ if } \vvu_n\to \vv u\in\R_+^d \mmas n\to\infty,
\]
see \cite[(4.25)]{bryc19limit}.

We now apply \eqref{Bil:Ex6.6}  to a sequence of probability measures
\[\mu_{u_1}\topp n(d u_2,\dots,d u_d)=\prodd k1{d-1}\wt p_{-s_k,-s_{k+1}}\topp n(u_k,u_{k+1})du_2\dots du_d\]
with fixed $u_1>0$, which by Scheff\'e's Lemma converges to the measure $\mu$ constructed from the transition densities \eqref{eq:pst}.
Recall that we have established the convergence of transition densities $\wt p_{s,t}\topp n$ in  Lemma \ref{lem:2}.

It then follows that $\mfa u_1>0$ we have
\begin{align}
\mathsf G_n(u_1) &:=  \esp\pp{G_{\vvx,\vvc,n}(u_1,\wt Y_{-s_2}\topp n,\dots,\wt Y_{-s_d}\topp n)\mmid \wt Y_{-s_1}\topp n = u_1}
 \nonumber\\
 &  \to\mathsf G(u_1) := %
 \begin{cases}
\displaystyle \int_{\R_+^{d-1}}G_{\vvx,\vvc}(u_1,u_2,\dots,u_d)\frac{(\A+s_1)^2+u_1}{(\A+s_d)^2+u_d}\prodd k1{d-1}\p_{s_k-s_{k+1}}(u_k,u_{k+1})du_2\dots du_d, &\A<\infty, \\\\
\displaystyle \int_{\R_+^{d-1}}G_{\vvx,\vvc}(u_1,u_2,\dots,u_d)\prodd k1{d-1}\p_{s_k-s_{k+1}}(u_k,u_{k+1})du_2\dots du_d, &\A=\infty,\label{eq:Gn}
 \end{cases}
\end{align}
 as $n\to\infty$.
  To prove \eqref{eq:num_limit} we analyze
\begin{equation*}
  \label{eq:Gn1}
\limn \frac1{\mathsf \Pi_n}\int_{\RR_+}\ind_{u\in[0,2n]}\mathsf G_n(u) \wt \pi\topp n_{-s_1}(u) \,du.
\end{equation*}
  In view of bounds \eqref{eq:pi upper} and \eqref{eq:G upper},
  we can apply the  dominated convergence theorem.
From pointwise convergence of $u\mapsto \ind_{u\in[0,2n]} G_n(u)$,  and $\wt \pi\topp n_t/\mathsf \Pi_n$, see \eqref{eq:Gn} and \eqref{eq:pi}, we get
\begin{multline*}
\limn \frac1{\mathsf \Pi_n}\int_{\RR_+}\ind_{u\in[0,2n]}\mathsf G_n(u) \wt \pi\topp n_{-s_1}(u) du
=
\begin{cases}
\displaystyle \int_{\R_+}\mathsf G(u)
  \frac{\sqrt u}{((\A+s_1)^2+u)((\C-s_1)^2+u)}\,du, &  \mbox{ if } \A,\C\in\RR, \\ \\
\displaystyle   \int_{\R_+}\mathsf G(u)
 \frac{\sqrt u}{(\A+s_1)^2+u}\,du, &  \mbox{ if } \A\in\RR,\; \C=\infty, \\ \\
\displaystyle   \int_{\R_+}\mathsf G(u)
   \sqrt u \,du, &  \mbox{ if } \A=\C=\infty. \\
\end{cases}
\end{multline*}
Recalling the definition  \eqref{eq:Gn} of $\mathsf G(u)$, this   ends the proof.
\end{proof}

Next we analyze the integral $I_{n,1}$ introduced in \eqref{eq:I1+I2}.  %
We observe that for a fixed $\delta\in(0,1)$ and large enough $n$ we have $\cosh(s_k/\sqrt n)\leq 1+\tfrac{s_1^2}{n}<1+\delta$. So with  $Y\topp n_{t_{1,n}}\in(-1-\delta,0)$,
\[
-\delta=1-(1+\delta)<\cosh(s_1/\sqrt n) +Y\topp n_{t_{1,n}}<
1+s_1^2/n<1+\delta.
\]
Consequently, recalling that $\cosh(s_k/\sqrt{n})\leq 1+s_k^2/n\leq 1+s_1^2/n$ if $s_1^2/n<1$,
\begin{equation}\label{I1*}
|I_{n,1}|\leq
(2+\tfrac{s_1^2}{n})^{n-n_1}(
1+\delta)^{n_1}\leq e^{s_1^2/2} 2^n \left(\tfrac{1+\delta}{2}\right)^{n x_1}.
\end{equation}
Referring to \eqref{eq:Pi-bd}, we see that
\begin{equation*}
    \label{eq:I1-negl}
    \limn \frac{I_{n,1}}{2^n \mathsf \Pi_n}=0.
\end{equation*}
\arxivc{Indeed, given $\eps>0$, we use  \eqref{eq:Pi-bd} to choose $N$ such that for $n>N$
$$\frac{1}{\mathsf \Pi_n}\leq e^{\eps n}.$$
Then by \eqref{I1*} we have
$$
 \frac{I_{n,1}}{2^n \mathsf \Pi_n} \leq e^{\eps n} e^{s_1^2/2} ((1+\delta)/2)^{n x_1} \to 0.
$$
provided $\eps$ is small enough so that $x_1\log ((1+\delta/2))>-\eps$.
}
Combining the asymptotics of $I_{n,1}$ and $I_{n,2}$ we see that
\begin{equation}
\label{eq:licznik}
 \limn \frac{\esp\bb{\prodd k1{d} \pp{\cosh(s_k/\sqrt n)+Y\topp n_{e^{-2s_k/\sqrt n}}}^{n_k-n_{k-1}}}}{2^n \mathsf \Pi_n}
\end{equation}
is given by the right hand side of \eqref{eq:num_limit}.

\subsubsection{Asymptotics of the denominator in \protect\eqref{eq:Laplace}}
\begin{lemma}\label{lem:4}
Under the  Assumption \ref{assump:1}, we have %
\begin{equation}\label{eq:denom}
 \limn\frac1{2^n\mathsf \Pi_n}\esp \pp{1+Y_1\topp n}^n
= \begin{cases}
\displaystyle \frac\pi{\sqrt 2}\, \mathfrak C_{\A,\C}, & \mbox{ if } \A,\C\in\RR, \\ \\
  2\sqrt{2}\,\pi\, \mathfrak C_{\A,\infty}, & \mbox{ if } \A\in\R, \C = \infty, \\ \\
4{\sqrt \pi},&  \mbox{ if } \A = \C = \infty,
\end{cases}
\end{equation}
where $\mathfrak C_{\A,\C}$ is defined in \eqref{eq:C_ac}
\end{lemma}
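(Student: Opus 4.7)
The plan is to split $\esp(1+Y_1\topp n)^n$ into (i) an integral against the absolutely continuous part $\pi_1\topp n$ of the Askey--Wilson marginal law, and (ii) an atomic contribution at $y_0=(A_n+A_n^{-1})/2>1$ which is present only when $A_n>1$, i.e.\ eventually when $\A<0$. This parallels the strategy already used in Lemmas~\ref{lem:1} and~\ref{lem:3}.

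For the absolutely continuous part, the change of variables $u=2n(1-y)$ yields
\[
\int_{-1}^{1}(1+y)^{n}\pi_{1}\topp n(y)\,dy \;=\; 2^{n}\int_{0}^{4n}\bigl(1-\tfrac{u}{4n}\bigr)^{n}\wt\pi_{0}\topp n(u)\,du.
\]
I would combine the pointwise limit $(1-u/(4n))^n\to e^{-u/4}$ with the uniform bound $(1-u/(4n))^n\le e^{-u/4}$ on $[0,4n]$, together with Lemma~\ref{lem:1} at $t=0$, which provides both pointwise convergence of $\wt\pi_0\topp n(u)/\mathsf\Pi_n$ to the density in \eqref{eq:pi} and the uniform bound $K\sqrt u$ on $[0,2n]$. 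Dominated convergence then applies on $[0,2n]$. On the tail $[2n,4n]$ we have $(1-u/(4n))^n\le 2^{-n}$ while the subprobabilistic mass of $\wt\pi_0\topp n$ is at most $1$, so this tail divided by $\mathsf\Pi_n$ vanishes thanks to \eqref{eq:Pi-bd}.

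When $\A<0$, one must add the atom contribution $(1+y_0)^n p(y_0)$. The identity $1+y_0=(A_n+1)^2/(2A_n)$ gives $(1+y_0)^n/2^n\to e^{\A^2/4}$. In the explicit formula for $p(y_0)$ recorded in Section~\ref{Sec:AWP}, the vanishing factor $1-A_n^{-2}\sim -2\A/\sqrt n$ in the numerator is balanced by $1-C_n/A_n\sim(\C-\A)/\sqrt n$ in the denominator; the remaining $q$-Pochhammer factors have finite nonzero limits which cancel in matching pairs (for instance $(qB_n/A_n;q)_\infty/(qB_nC_n;q)_\infty\to 1$, and similarly for the $D$-factors and $(qA_n^{-2};q)_\infty/(qA_nB_nC_nD_n;q)_\infty$). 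Comparing with the asymptotics of $\mathsf\Pi_n$ yields the limit of this atomic contribution. For $\A\ge 0$, the atom is absent and only the a.c.\ analysis is needed.

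Finally I would evaluate the resulting integrals. The core one-factor identity is
\[
\int_{0}^{\infty}\frac{e^{-u/4}\sqrt u}{\alpha^{2}+u}\,du \;=\; 2\sqrt\pi - \pi\alpha\,H(\alpha/2), \qquad \alpha>0,
\]
derived by writing $1/(\alpha^2+u)=\int_0^\infty e^{-s(\alpha^2+u)}ds$, performing the Gaussian integration in $u$, and then integrating by parts so that $H(x)=e^{x^2}\erfc(x)$ emerges. Partial fractions reduce the case $\A,\C\in\RR$, $\A\ne\C$ to this formula (the $\A=\C$ case following by continuity or by L'H\^opital), while $\int_0^\infty\sqrt u\,e^{-u/4}du=4\sqrt\pi$ handles $\A=\C=\infty$. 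When $\A<0$, the a.c.\ integral naturally produces $H(|\A|/2)$; one invokes the reflection identity $H(x)+H(-x)=2e^{x^2}$ to convert it, together with the atom's contribution, into $H(\A/2)$, matching \eqref{eq:C_ac}. The principal obstacle is precisely this atomic bookkeeping when $\A\le 0$: one must verify that the $q$-Pochhammer asymptotics produce exactly the cancellations outlined, and then check that the algebraic manipulation via $H(x)+H(-x)=2e^{x^2}$ collapses the sum of continuous and atomic contributions onto the asserted closed form in all three cases of \eqref{eq:denom}.
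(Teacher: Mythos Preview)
Your proposal is correct and follows essentially the same approach as the paper: split $\esp(1+Y_1\topp n)^n$ into absolutely continuous and atomic parts, apply Lemma~\ref{lem:1} at $t=0$ with dominated convergence for the former (the paper phrases the tail bound equivalently as $\int_{-1}^0(1+y)^n\pi_1\topp n(y)\,dy\le 1$), compute the atom asymptotics from the explicit Askey--Wilson mass formula when $\A<0$, evaluate the resulting integrals via the identity \eqref{eq:Laplace_H}, and combine using $H(x)+H(-x)=2e^{x^2}$. The only cosmetic differences are that the paper handles $\A=\C$ by differentiating \eqref{eq:Laplace_H} in $\A$ rather than by continuity, and treats the $\C=\infty$ subcase of the atom separately (where $1-C_n/A_n\sim 1-C_n$ instead of $(\C-\A)/\sqrt n$).
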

\begin{proof}
Since $\C>0$ and $B_n,D_n\in(-1,0]$, the  atoms for $Y_1\topp n$ may only arise from $A_n$ if $\A<0$.

We first consider the case  $\A\geq 0,\C>0$. The proof is similar as in \cite[Lemma 4.5]{bryc19limit}.
Since $Y_1\topp n$ has a density supported on $[-1,1]$, we have
\[
  0\leq  \esp\pp{1+Y_1\topp n}^n-  \esp\left[\ind_{Y_1\topp n\in[0,1]}\pp{1+Y_1\topp n}^n \right] =\int_{-1}^0(1+u)^n \pi_1\topp n(u)du  \leq 1.
\]
Therefore by \eqref{eq:Pi-bd} %
and  \eqref{eq:Pi_n}   we get
\begin{align*}
\mathsf \Pi_n^{-1} 2^{-n}\esp\pp{1+Y_1\topp n}^n
&
 \sim  \mathsf \Pi_n^{-1}2^{-n} \esp\left[\ind_{Y_1\topp n\in[0,1]}\pp{1+Y_1\topp n}^n \right]
= \mathsf \Pi_n^{-1}
\esp\left[\ind_{\wt Y_0\topp n\in[0,2n]}\pp{1-\tfrac{\wt Y_0\topp n}{4n}}^n\right]
\\
& \sim
\begin{cases}
   \int_{\R_+}\frac{\sqrt u}{(\A^2+u)(\C^2+u)}e^{-u/4}du, &  0\leq \A<\infty,  0<\C<\infty, \\ \\
  \int_{\R_+}\frac{\sqrt u}{\A^2+u }e^{-u/4}du, & 0\leq \A<\infty, \C=\infty, \\ \\
  \int_{\R_+} \sqrt u e^{-u/4}du, & \A=\C=\infty.
\end{cases}
\label{eq:first case}
\end{align*}
see \eqref{eq:num_limit}  and \eqref{PsiAC} with $d=1$ and $s_1=0$, compare \eqref{**G**}. %

Recall \eqref{eq:H}.
To evaluate the integrals, we use the identity
\begin{equation}\label{eq:Laplace_H}
\int_0^\infty \frac{\sqrt u}{\A^2+u}e^{-u/4}du = \pi\pp{\frac2{\sqrt \pi} - |\A| H(|\A|/2)},
\end{equation}
which is valid for all real $\A$. (This is \cite[Section 4.2 formula (22) pg 136]{erdelyi1954fg}  evaluated at  $p=1/4$.)

For $\A\geq 0,\C>0, \A\ne\C$, we can apply \eqref{eq:Laplace_H} directly. We get
\begin{align*}
\int_{\R_+} \frac{\sqrt u}{(\A^2+u)(\C^2+u)}e^{-u/4}du & = \int_{\R_+}\pp{\frac1{\A^2+u}-\frac1{\C^2+u}}\frac{\sqrt u}{\C^2-\A^2}e^{-u/4}du
 = \pi \frac{\C H(\C/2) - \A H(\A/2)}{\C^2-\A^2},
\end{align*}
which proves \eqref{eq:denom} for this case. To prove \eqref{eq:denom}
for $\A\geq 0,\C = \infty$ and $\A = \C = \infty$, we   apply the identity \eqref{eq:Laplace_H} and $\int_0^\infty e^{-u/4}\sqrt udu = 4\sqrt \pi$ respectively.

For $\A = \C>0$, we use instead the identity
\[
\int_{\R_+}\frac{\sqrt u}{(\A^2+u)^2}e^{-u/4}du = \frac\pi{2\A} \pp{\pp{1+\frac{\A^2}2} H(\A/2) - \frac\A{\sqrt\pi}},
\]
which can be obtained by differentiating \eqref{eq:Laplace_H} with respect to   $\A$,
and we get \eqref{eq:denom}.
This completes the proof for the case $\A\geq 0,\C>0$.

Next, we consider $\A<0<\C$ and $\A+\C>0$.
 The law of $Y\topp n_1$ now has both atomic and continuous part, and for the continuous part, restricted to $[-1,1]$, we   use \eqref{eq:Laplace_H} for $\A<0$. Namely, we get
\begin{equation}\label{eq:denom_cont}
 2^{-n}\esp\pp{\pp{1+Y\topp n_1}^n\inddd{Y_1\topp n\in[-1,1]}} \sim
 \pi \mathsf \Pi_n \cdot \begin{cases}
 \displaystyle \frac{\C H(\C/2) +\A H(-\A/2)}{\C^2-\A^2} & \A<0, \C<\infty,\\
  \\
\displaystyle \pp{\frac2{\sqrt \pi} + \A H(-\A/2)} & \A<0, \C=\infty.
 \end{cases}
\end{equation}
It remains to compute the atomic part. Again, for $n$ large enough there is a single atom at
\begin{equation*}
    \label{eq:atomAy}
    \mathsf y_n:=\frac12\pp{A_n+\frac1{A_n}},
\end{equation*}
and   its probability is
\begin{align*}\label{eq:atomAp}
\mathfrak p_n=\frac{\qp{A_n^{-2},B_nC_n,B_nD_n,C_nD_n}}{\qp{B_n/A_n,C_n/A_n,D_n/A_n,A_nB_nC_nD_n}}& \sim
\frac{(1-A_n^{-2})}{(1-C_n/A_n)}\cdot \frac{1-B_nD_n}{1-A_nB_nC_nD_n} \\
&\sim \frac{2(A_n-1)}{A_n-C_n}\cdot \frac{1-B_nD_n}{1-A_nB_nC_nD_n}, \nonumber
\end{align*}
see \eqref{1*1}.

We note that %
\begin{equation*}
  \label{eq:atomA}
  2^{-n}(1+\mathsf y_n)^n = \frac1{2^{2n}}\pp{A_n+\frac1{A_n}+2}^n = \pp{1+\frac1{4A_n}(A_n-1)^2}^n\sim e^{\A^2/4}.
\end{equation*}

We also note that %
\begin{equation*}%
\frac{(1-A_n^{-2})}{(1-C_n/A_n)} \sim \frac{2(A_n-1)}{A_n-C_n} \sim
\begin{cases}
\displaystyle  \frac{-2\A}{\C-\A} &  \A<0,\;0<\C<\infty,\\\\
\displaystyle  \frac{-2\A}{\sqrt{n}(1-C_n)} & \A<0,\;\C=\infty.
\end{cases}
\end{equation*}
\arxivc{Indeed,  $(A_n-C_n)/(1-C_n)=1+(A_n-1)/(C_n-1)\sim 1$ if $\sqrt{n}(1-C_n)\to \infty$}
It follows that
\begin{align*}
2^{-n}\esp & \pp{\pp{1+Y\topp n_1}^n\inddd{Y_1\topp n\notin[-1,1]}}  \sim \frac{1-B_nD_n}{1-A_nB_nC_nD_n} e^{\A^2/4}\times
\begin{cases}
\displaystyle  \frac{-2\A}{\C-\A} &  \A<0,\;0<\C<\infty,\\\\
\displaystyle  \frac{-2\A}{\sqrt{n}(1-C_n)} & \A<0,\;\C=\infty.
\end{cases}
\end{align*}
In view of \eqref{eq:Pi_n} we obtain
\begin{align}\label{eq:denom_atom}
2^{-n}\esp\pp{\pp{1+Y\topp n_1}^n\inddd{Y_1\topp n\notin[-1,1]}}
& \sim \pi\, \mathsf \Pi_n \; e^{\A^2/4}
\times \begin{cases}
\displaystyle \frac{-2\A}{\C^2-\A^2}  &  \A<0,\;0<\C<\infty,\\\\
\displaystyle  -2\A  & \A<0,\;\C=\infty.
\end{cases}
\end{align}

Combining \eqref{eq:denom_cont} and \eqref{eq:denom_atom}, we have
\begin{align*}
 2^{-n}\esp\pp{1+Y\topp n_1}^n & \sim \pi\,\mathsf\Pi_n  \times
 \begin{cases}
\displaystyle  \frac{\C H(\C/2) -\A H(\A/2)}{\C^2-\A^2}  &  \A<0,\;0<\C<\infty,\\\\
\displaystyle  \pp
 { \frac2{\sqrt \pi}-\A H(\A/2)}  & \A<0,\;\C=\infty,
 \end{cases}
\end{align*}
where we used the identity $H(x)+H(-x)=2\exp(x^2)$.
\end{proof}

\begin{proof}[Proof of Theorem \ref{thm:2}]
It remains to put all pieces together.
To prove \eqref{konkluzja}, we refer to \eqref{eq:Laplace}.
The normalizing constant in $\Psi_{\vv x}\topp{\A,\C}(\vv c)$ arises from the asymptotics of the denominator in  \eqref{eq:Laplace} given in
 Lemma \ref{lem:4}.
The remaining part of \eqref{konkluzja} is  a consequence of the  asymptotics of the numerator in  \eqref{eq:Laplace}
which is given in \eqref{eq:licznik} and \eqref{eq:num_limit}.
In particular, %
the first factor on the right hand side of \eqref{eq:num_limit}
gives  the fluctuation part corresponding to the Brownian motion:
\begin{equation*}\label{eq:1/4}
 \exp\pp{\frac14\summ k1d s_k^2(x_k-x_{k-1})} = \esp \exp\pp{-\summ k1d \frac{c_k}{\sqrt 2}\BB_{x_k}}.
\end{equation*}
\end{proof}

\subsection{A dual representation for the Laplace transform}
\label{sec:duality}
Informally, a dual representation for the Laplace transform expresses the Laplace transform of the finite-dimensional distributions of a process as a Laplace transform of another process by exchanging the roles of the arguments of the Laplace transform and the time variables for the two processes.
We shall establish
 a general dual representation identity
that applies to the three limit Laplace transforms in Theorem \ref{thm:2}, see \eqref{PsiAC}.
Recall that %
$\p_{t-s}(x,y)$ is the transition probability density function \eqref{eq:Biane p} of   the   $1/2$-stable Biane process, and $\g_t(x,y)$ is the transition sub-probability density function \eqref{eq:g} of the Brownian motion killed at hitting zero.

\begin{proposition}\label{prop:duality} Let $f,g$ be two measurable functions on $\RR_+$.
With $c_1,\cdots,c_{d-1}>0$ and $0= x_0<x_1<\cdots<x_d$,
\begin{multline}\label{eq:duality}
\int_{\R_+^d}e^{-\frac14\summ k1d(x_k-x_{k-1})u_k}f(u_1)\left(\prodd k1{d-1}\p_{c_k}(u_k,u_{k+1})\right)g(u_d) %
d\vv u
\\
= \frac8\pi\int_{\R_+^{d-1}}e^{-\frac1{\sqrt 2}\sum\limits_{k=1}^{d-1}c_k z_k}\what f(z_1)\left(\prodd k2{d-1}\g_{x_k-x_{k-1}}(z_{k-1},z_k)\right)\what g(z_{d-1})d\vv z,
\end{multline}
where %
\begin{align*}
\what f(z) & := \int_{\R_+}f(2u^2)\sin(uz)e^{-x_1u^2/2}du,\\
\what g(z)&:= \int_{\R_+}g(2u^2)u\sin(uz)e^{-(x_d-x_{d-1})u^2/2}du,
\end{align*}
provided that  the functions under the multiple integrals  in \eqref{eq:duality} are absolutely integrable.
\end{proposition}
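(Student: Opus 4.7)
The plan is to prove \eqref{eq:duality} by rewriting both transition kernels via their sine-Fourier representations, substituting $u_k = 2r_k^2$ on the LHS, swapping the order of integration, and recognizing the resulting one-dimensional $r$-integrals. The main analytic input is the pair of identities
\begin{equation*}
\int_0^\infty e^{-cz/\sqrt 2}\sin(rz)\sin(sz)\,dz \;=\; 2\pi r\,\p_c(2r^2,2s^2),\qquad r,s,c>0,
\end{equation*}
and
\begin{equation*}
\g_t(x,y) \;=\; \frac{2}{\pi}\int_0^\infty e^{-tu^2/2}\sin(xu)\sin(yu)\,du,\qquad x,y,t>0.
\end{equation*}
The first I would verify by the product-to-sum formula $\sin(rz)\sin(sz) = \tfrac12[\cos((r-s)z)-\cos((r+s)z)]$, the elementary Laplace transform $\int_0^\infty e^{-az}\cos(bz)\,dz = a/(a^2+b^2)$ with $a=c/\sqrt 2$, and simplification of the common denominator to $c^4/4+c^2(r^2+s^2)+(r^2-s^2)^2$, which matches \eqref{eq:Biane p} after multiplication by $2\pi r$. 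The second is the standard Fourier-sine expansion of the killed Brownian kernel, obtained from the Gaussian Fourier transform applied to the odd extension $y\mapsto e^{-(x-y)^2/(2t)}-e^{-(x+y)^2/(2t)}$.

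With these identities in hand, substituting $u_k = 2r_k^2$ (with $du_k = 4r_k\,dr_k$) in the LHS of \eqref{eq:duality} yields
\begin{equation*}
\mathrm{LHS} \;=\; 4^d\int_{\R_+^d}r_1\cdots r_d\,e^{-\frac12\sum_{k=1}^d(x_k-x_{k-1})r_k^2}f(2r_1^2)g(2r_d^2)\prod_{k=1}^{d-1}\p_{c_k}(2r_k^2,2r_{k+1}^2)\,d\vv r.
\end{equation*}
Rewriting each factor $\p_{c_k}(2r_k^2,2r_{k+1}^2)$ via the first identity as $\frac{1}{2\pi r_k}\int_0^\infty e^{-c_k z_k/\sqrt 2}\sin(r_k z_k)\sin(r_{k+1}z_k)\,dz_k$ introduces $d-1$ auxiliary variables $z_1,\dots,z_{d-1}$. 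After interchanging the $\vv r$- and $\vv z$-integrals, the factor $\prod_{k=1}^{d-1}1/r_k$ cancels all $r_k$'s except $r_d$, and since each $z_k$ couples only $r_k$ and $r_{k+1}$ through its sine product, the $\vv r$-integral factorizes into one-dimensional integrals. The $r_1$-integral is precisely $\what f(z_1)$ by definition; the $r_d$-integral (with the surviving factor $r_d$) is $\what g(z_{d-1})$ by definition; and for each $2\le k\le d-1$, the integral $\int_0^\infty e^{-(x_k-x_{k-1})r^2/2}\sin(rz_{k-1})\sin(rz_k)\,dr$ equals $\tfrac{\pi}{2}\g_{x_k-x_{k-1}}(z_{k-1},z_k)$ by the second identity above.

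Collecting constants yields the prefactor $\frac{4^d}{(2\pi)^{d-1}}\cdot\bigl(\tfrac{\pi}{2}\bigr)^{d-2}=\frac{2^{2d}}{2^{2d-3}\pi}=\frac{8}{\pi}$, matching the RHS of \eqref{eq:duality}. The main obstacle is justifying Fubini's theorem in the interchange of $\vv r$- and $\vv z$-integrals: bounding $|\sin|\le 1$ and integrating $\vv z$ first produces only a factor of $r_d$ in front (not $r_1\cdots r_d$), so absolute integrability of the combined $(\vv r,\vv z)$-integrand does not follow immediately from the assumed absolute integrability of the LHS in the original $\vv u$ variables. I would address this by first proving the identity for $f,g$ in a dense class, say smooth and compactly supported in $(0,\infty)$, for which the combined integrand is manifestly integrable and Fubini applies directly; then extend to all $f,g$ satisfying the stated absolute integrability hypothesis via a truncation argument such as replacing each $\p_{c_k}$ by its restriction to $z_k\in[0,R]$ before the interchange, applying Fubini, and sending $R\to\infty$ using dominated convergence on both sides of the resulting identity.
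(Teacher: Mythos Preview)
Your proof is correct and follows essentially the same approach as the paper: rewrite $\p_{c_k}$ and $\g_t$ via their sine-Fourier representations, swap the order of integration, and factor the inner integral into the one-dimensional pieces $\what f$, $\what g$, and $\tfrac{\pi}{2}\g_{x_k-x_{k-1}}$. The only cosmetic difference is that the paper substitutes $u_k\mapsto u_k^2$ and uses the identity $y\p_t(x^2,y^2)=\tfrac1\pi\tfrac{y}{x}\int_0^\infty e^{-tz}\sin(xz)\sin(yz)\,dz$, which forces a final rescaling $z_k\mapsto \sqrt 2 z_k$ via $\g_{x/2}(z,z')=\sqrt 2\,\g_x(\sqrt 2 z,\sqrt 2 z')$; your substitution $u_k=2r_k^2$ absorbs the $\sqrt 2$ into the sine-Fourier identity for $\p_c$ up front and reaches the RHS directly. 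Your discussion of the Fubini issue is in fact more careful than the paper's, which performs the interchange formally under the blanket hypothesis of absolute integrability without further comment.
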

The proof follows the same idea as in \cite{bryc18dual}, where the cases $\A = 0,\C = \infty$ (Brownian excursion) and  $\A =\C = \infty$ (Brownian excursion) have been established therein.
The novelty is that now new expressions of $f$ and $g$ show up in applications and they lead to new %
dual representations of the Laplace transforms
in Proposition \ref{prop:duality summary}.
Another dual representation formula can be found in \cite{bryc21markov}.

\begin{proof}
 With a change of variables $u_k\mapsto u_k^2$, the left-hand side of \eqref{eq:duality} becomes
\begin{align}
2^d\int_{\R_+^d}e^{{-\frac14\summ k1d(x_k-x_{k-1})u_k^2}}u_1f(u_1^2)\left(\prodd k1{d-1}u_{k+1}\p_{c_k}(u_k^2,u_{k+1}^2)\right)g(u_d^2)d\vv u.\label{eq:duality1}
\end{align}
Recall that \cite[after Eq.~(2.2)]{bryc18dual} %
\begin{equation*}\label{eq:ypc}
y\p_t(x^2,y^2) = \frac1\pi \frac yx\int_{\R_+} e^{-tz}\sin(xz)\sin(yz)dz,
\end{equation*}
and
\[
\int_{\R_+} e^{-tx^2/2}\sin(xy_1)\sin (xy_2)dx = \frac\pi2\g_t(y_1,y_2).
\]
Then,
\eqref{eq:duality1} becomes
 \begin{align*}
&  \frac{2^d}{\pi^{d-1}}  \int_{\R_+^d}u_1f(u_1^2) \pp{\prodd k1{d-1}\frac{u_{k+1}}{u_k}\int_{\R_+}e^{-c_kz_k}\sin(u_kz_k)\sin(u_{k+1}z_k)dz_k} g(u_d^2)\; e^{-\frac14\sum\limits_{k=1}^d(x_k-x_{k-1})u_k^2}d\vv u\\
&  = \frac 4\pi\int_{\R_+^{d-1}}e^{-\sum\limits_{k=1}^{d-1}c_kz_k}\Bigg(\int_{\R_+}f(u_1^2)\sin(u_1z_1)e^{-x_1u_1^2/4}du_1   \prodd k2{d-1}\frac2\pi\int_{\R_+}e^{-(x_k-x_{k-1})u_k^2/4}\sin(u_kz_k)\sin(u_kz_{k-1})du_k \\
&  \quad\times \int_{\R_+}u_d e^{-(x_d-x_{d-1})u_d^2/4}\sin(u_dz_{d-1})g(u_d^2)du_d \Bigg)d\vv z.
\end{align*}
One can then show that  the above is the same as
\begin{align*}
\frac4\pi\int_{\R_+^{d-1}}e^{-\sum\limits_{k=1}^{d-1}c_kz_k}(\sqrt 2\what f(\sqrt 2z_1))\prodd k2{d-1}\g_{(x_k-x_{k-1})/2}(z_{k-1},z_k)(2\what g(\sqrt 2z_{d-1}))d\vv z.
 \end{align*}
The desired result now follows from $\g_{x/2}(z,z') = \sqrt 2 \g_x(\sqrt 2z,\sqrt 2z')$ and changes of variables $\sqrt 2z_k\mapsto z_k, k=1,\dots,d-1$.
\end{proof}
\begin{proof}[Proof of Proposition \ref{prop:duality summary}]
We consider separately the three cases in \eqref{PsiAC}, starting with the easiest.
\medskip

\noindent (i)    Case  $\A = \infty, \C = \infty$: the goal is to show
\[
\Psi\topp{\infty,\infty}_\vvx(\vvc) %
=
\esp \exp\pp{-\frac1{\sqrt 2}\sum\limits_{k=1}^{d-1}c_k\BB^{\rm ex}_{x_k}}.
\]

We apply Proposition \ref{prop:duality}  with  $f(u) = \sqrt u$, $g(u) = 1$,  $x_d = 1$.  Then by straightforward calculation  we get,
\begin{align}
\what f(z)  & = \sqrt 2\int_{\R_+}ue^{-x_1u^2/2}\sin(uz)du = \sqrt 2\pi \ell_{x_1}(z),\label{eq:whatf(z)}\\
\what g(z) & = \int_{\R_+}ue^{-(1-x_{d-1})u^2/2}\sin(uz)du = \pi \ell_{1-x_{d-1}}(z),\label{eq:whatg(z)}
\end{align}
see \eqref{eq:ell}.
Looking at the right hand side of \eqref{eq:duality} we then recognize
\[
\sqrt {8\pi} \ell_{x_1}(z_1)\prodd k2{d-1}\g_{x_{k}-x_{k-1}}(z_{k-1},z_k)\ell_{1-x_{d-1}}(z_{d-1}), \quad (z_1,\dots,z_{d-1})\in\R_+^{d-1}, %
\]
as the joint probability density of a Brownian excursion at times $x_1,\dots,x_{d-1}$, see \eqref{eq:pdf_Bex}.
The desired identity now follows. %

\medskip

\noindent (ii) %
 Case $\A\in\RR, \C = \infty$:    the goal is to show
 \begin{align*}
\Psi\topp{\A,\infty}_\vvx(\vvc)
& = \esp\exp\pp{-\frac1{\sqrt 2}\summ k1d c_k\eta\topp{\A,\infty}_{x_k}}.
\end{align*}
This time, consider $f(u) = \sqrt u$ as before (so $\what f(z)$ is in \eqref{eq:whatf(z)}), but
\begin{equation}\label{eq:g(u)}
g(u) = \frac1{\alpha^2+u}, \quad \alpha= \A+s_d = \A+c_d\ge 0.
\end{equation}
Then, %
 \begin{align*}
 \what g(z) & = \int_{\R_+}e^{-(1-x_{d-1})u^2/2}\frac{u}{\alpha^2+2u^2} \sin(uz)du \\
 & = \frac14\int_0^\infty e^{-(1-x_{d-1})u/2}\frac {\sin(\sqrt uz)}{\alpha^2/2+u}du = \frac14 \mathsf F\pp{z\mmid \frac \alpha{\sqrt 2},\frac{1-x_{d-1}}2}.
 \end{align*}
with %
\begin{align}
\mathsf F(z\mid \alpha,\theta) & :=\int_0^\infty e^{-\theta u}\frac{\sin(z \sqrt u)}{\alpha^2+u}du
 = \frac \pi 2e^{\alpha^2\theta}\pp{e^{-\alpha z}\erfc \pp{\alpha\sqrt{\theta }-\frac z{2\sqrt\theta}}
- e^{\alpha z}\erfc \pp{\alpha\sqrt{\theta}+\frac z{2\sqrt\theta}}}\nonumber\\
\label{eq:FH}
&  =   \frac\pi 2  e^{-z^2/(4\theta)} \pp{H\pp{\alpha\sqrt\theta - \frac z{2\sqrt\theta}} - H\pp{\alpha\sqrt\theta + \frac z{2\sqrt\theta}}}   =  \pi \int_0^\infty \g_{2\theta}(z,y)e^{-\alpha y}dy,
\end{align}
where the second step follows from \cite[2.4.3.24]{prudnikov92integrals}, and the last from  Lemma \ref{lem:FH}. Therefore,
\begin{equation}\label{eq:ghat}
\what g(z_{d-1}) = \frac\pi4\int_0^\infty \g_{1-x_{d-1}}(z_{d-1},z_d)e^{-\alpha z_d/\sqrt 2}dz_d.
\end{equation}
Now, %
\eqref{eq:duality} becomes %
\begin{align*}
\Psi_{\vv x}\topp{\A,\infty}(\vv c)
& = \frac 8{\pi^2 \sqrt{8}\mathfrak C_{\A,\infty}}\int_{\R_+^{d-1}} \what f(z_1)\prodd k2{d-1}\g_{x_k-x_{k-1}}(z_{k-1},z_k) \what g(z_{d-1})e^{-\frac1{\sqrt 2}\sum\limits_{k=1}^{d-1}c_kz_k}d\vv z\\
& = \frac{1}{  \mathfrak C_{\A,\infty}}\int_{\R_+^{d}}  \ell_{x_1}(z_1)\prodd k2{d}\g_{x_k-x_{k-1}}(z_{k-1},z_k)e^{-\frac1{\sqrt 2}\sum\limits_{k=1}^{d} c_kz_k}e^{-\A z_d/\sqrt 2}d\vv z.
\end{align*}
To recognize the above as the desired Laplace transform,
we invoke \eqref{eq:proces-Ainfty}.

\medskip
\noindent (iii) %
 Case $\A,\C\in\RR, \A+\C>0$:
the goal is to show
\begin{align*}
\Psi\topp{\A,\C}(\vv c)
& = \esp \exp\pp{-\frac1{\sqrt 2}\summ k1d c_k\pp{\wt\eta\topp{\A,\C}_{x_k}-\wt\eta\topp{\A,\C}_0}}.
\end{align*}
We apply Proposition \ref{prop:duality} with $x_d=1$.
This time, consider
$
f(u) = {\sqrt u}/({\gamma^2+u})$, where $\gamma= \C-s_1=\C-c_1-\dots -c_d\ge 0$,
and we take
$g(u)$ as in \eqref{eq:g(u)}.
We have, by the same calculation for $\what g(z)$
as
before,   see \eqref{eq:FH},
\begin{align}
\what f(z) & = \int_{\R_+}\frac {\sqrt 2u}{\gamma^2+2u^2}\sin(uz)e^{-x_1u^2/2}du = \frac1{2\sqrt 2}\mathsf F\pp{z\mmid \frac \gamma{\sqrt 2},\frac{x_1}2}
= \frac\pi{2\sqrt 2}\int_{\R_+}\g_{x_1}(z_0,z_1)e^{-\gamma z_0/\sqrt 2}dz_0.\label{eq:whatf(z)2}
\end{align}
By \eqref{eq:ghat} and \eqref{eq:whatf(z)2}, the duality \eqref{eq:duality} gives
\[
\Psi\topp{\A,\C}_\vvx(\vvc)  = \frac1{ \mathfrak C_{\A,\C}} \int_{\R_+^{d+1}}e^{-\C z_0/\sqrt 2-\A z_d/\sqrt 2}\prodd k1 d\g_{x_k-x_{k-1}}(z_{k-1},z_k)e^{-\frac1{\sqrt 2}\summ k1dc_k(z_k-z_0)}d\vv z.
\]
In view of \eqref{eq:eta} the desired result now follows.
\end{proof}

\subsection*{Acknowledgement} %
The authors thank Ivan Corwin and Alisa Knizel for sharing an early version of \cite{corwin21stationary}. We thank Guillaume Barraquand for information about Ref. \cite{barraquand2022steady} and the discussion of its contents. We also thank Alexey Kuznetsov for several discussions that inspired Proposition \ref{prop:duality}. %

WEB's research was partially supported by Simons Foundation/SFARI Award Number: 703475, US.
YW's research was partially supported by Army Research Office, US (W911NF-20-1-0139).
JW's research was partially supported by grant  IDUB no. 1820/366/201/2021, Poland.

\appendix
\section{Auxiliary integrals}
Recall $\g_t$ as in \eqref{eq:g}. Recall $H(x):=e^{x^2}\erfc(x)$ in \eqref{eq:H}.
\begin{lemma}\label{lem:FH}
For all $c\in\R$, %
\[
\int_0^\infty \g_t(x,y)e^{-c y}dy = \frac12 e^{-x^2/(2t)} \pp{H\pp{\frac{c t-x}{\sqrt{2t}}} - H\pp{\frac{c t+x}{\sqrt{2t}}}}.
\]
\end{lemma}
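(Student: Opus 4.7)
The plan is a direct computation by splitting the kernel $\g_t(x,y)$ into its two Gaussian pieces and evaluating each piece as an incomplete Gaussian integral.

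First I would write
\[
\int_0^\infty \g_t(x,y)e^{-cy}dy = \frac{1}{\sqrt{2\pi t}}\pp{I_- - I_+}, \qquad I_\pm := \int_0^\infty e^{-(x\pm y)^2/(2t) - cy}dy,
\]
using the definition \eqref{eq:g} of $\g_t$. Each $I_\pm$ is a tail Gaussian integral, so the standard trick is to complete the square in $y$ and reduce to $\erfc$. In $I_-$, the substitution $u=y-x$ (and the identity $u^2+2tcu=(u+tc)^2-t^2c^2$) turns the exponent into $-(u+tc)^2/(2t)+tc^2/2-cx$, giving
\[
I_- = \sqrt{\tfrac{\pi t}{2}}\,e^{tc^2/2-cx}\,\erfc\!\pp{\tfrac{tc-x}{\sqrt{2t}}}.
\]
An entirely analogous substitution $u=y+x$ in $I_+$ yields
\[
I_+ = \sqrt{\tfrac{\pi t}{2}}\,e^{tc^2/2+cx}\,\erfc\!\pp{\tfrac{tc+x}{\sqrt{2t}}}.
\]

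Next I would convert each $\erfc$ into $H$ by the defining relation $H(z)=e^{z^2}\erfc(z)$, i.e. $\erfc(z) = e^{-z^2}H(z)$. The identity
\[
\pp{\tfrac{tc\mp x}{\sqrt{2t}}}^{\!2} = \tfrac{tc^2}{2} \mp cx + \tfrac{x^2}{2t}
\]
shows that the prefactor $e^{tc^2/2\mp cx}$ in $I_\mp$ is exactly cancelled upon introducing $H$, leaving behind the common factor $e^{-x^2/(2t)}$. Combining the two pieces and multiplying by the $1/\sqrt{2\pi t}$ in front then collapses to
\[
\tfrac12 e^{-x^2/(2t)}\pp{H\!\pp{\tfrac{tc-x}{\sqrt{2t}}} - H\!\pp{\tfrac{tc+x}{\sqrt{2t}}}},
\]
which is the claimed identity.

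There is no substantive obstacle: the only thing to watch carefully is the bookkeeping of signs and constants when completing the square (in particular that the $\pm x$ in $\g_t$ propagates consistently through the substitutions $u=y\mp x$ and through the arguments $(tc\mp x)/\sqrt{2t}$ of $H$), and that the integration variable running from $0$ to $\infty$ in $y$ becomes $(tc\mp x)/\sqrt{2t}$ to $\infty$ after the change of variables—precisely the lower limit that produces $\erfc$ at the right point. Since $c\in\RR$ is arbitrary (no positivity required), the integrals are absolutely convergent thanks to the Gaussian factor, and no further justification is needed.
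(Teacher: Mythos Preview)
Your proposal is correct and follows essentially the same approach as the paper: split $\g_t$ into its two Gaussian pieces, complete the square in each, and recognize the resulting tail integrals as $\erfc$ (and hence $H$). The only cosmetic difference is that the paper completes the square directly in $y$ rather than substituting $u=y\mp x$ first, and observes that the second piece follows from the first by replacing $x$ with $-x$.
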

\begin{proof}
Write
\[
\int_0^\infty \g_t(x,y)e^{-c y}dy  = \int_0^\infty \frac1{\sqrt{2\pi t}}\pp{\exp\pp{-\frac{(x-y)^2}{2t}}- \exp\pp{-\frac{(x+y)^2}{2t}}}e^{-c y}dy.
\]
First we compute %
 \begin{align*}
 \int_0^\infty \frac1{\sqrt{2\pi t}}e^{-(y-x)^2/(2t)}e^{- c y}dy & = e^{(x-c t)^2/(2t)} e^{-x^2/(2t)}\int_0^\infty \frac1{\sqrt{2\pi t}}e^{-(y-(x-c t))^2/(2t)}dy \\
 & = \frac12 H\pp{\frac{c t-x}{\sqrt{2t}}}e^{-x^2/(2t)}.
 \end{align*}
The second term is obtained by replacing $x$ by $-x$.
\end{proof}
\begin{lemma}\label{lem:C_ac}
For $a,c\in\R$ and $a+c>0$, %
\begin{equation*}\label{eq:Ctab}
\int_0^\infty \int_0^\infty \g_t(x,y)e^{-a x-c y}dxdy = \begin{cases}
\displaystyle \frac{a H(a \sqrt{t/2})-c H(c \sqrt{t/2})}{a^2-c^2}, &\mbox{ if } a\ne c,
\\\\
\displaystyle \frac{1+a^2te^{a^2t/2}}{2a} H(a\sqrt {t/2}) - \frac t{\sqrt{2\pi t}}, & \mbox{ if } a=c.
\end{cases}
\end{equation*}
\end{lemma}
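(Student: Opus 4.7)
The plan is to reduce the double integral to a one-dimensional one by invoking Lemma \ref{lem:FH}, then to evaluate the resulting single integral using classical Laplace transforms of the complementary error function. Carrying out the $y$-integration first, Lemma \ref{lem:FH} expresses $\int_0^\infty \g_t(x,y)\,e^{-cy}\,dy$ explicitly in terms of $H$. Inserting this into the double integral and integrating against $e^{-ax}$, after the substitution $u = x/\sqrt{2t}$ and the (local) abbreviations $\alpha := a\sqrt{t/2}$, $\gamma := c\sqrt{t/2}$, the expansion $H(z)=e^{z^2}\erfc(z)$ makes the Gaussian factors telescope, reducing the problem to
\[
\tfrac{\sqrt{2t}}{2}\, e^{\gamma^2}\int_0^\infty \bigl(e^{-2(\alpha+\gamma)u}\,\erfc(\gamma-u) - e^{-2(\alpha-\gamma)u}\,\erfc(\gamma+u)\bigr)\,du.
\]

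The two integrals inside the brackets are classical: each is evaluated by one integration by parts against $\erfc'(z) = -(2/\sqrt\pi)\,e^{-z^2}$ and one completion of the square, yielding the identities
\[
\int_0^\infty e^{-pu}\erfc(q \mp u)\,du \;=\; \frac{\erfc(q)}{p} \;\pm\; \frac{e^{p^2/4 \mp pq}}{p}\,\erfc\!\bigl(\tfrac{p}{2}\mp q\bigr),
\]
which I would apply with $(p,q)=(2(\alpha+\gamma),\gamma)$ and $(p,q)=(2(\alpha-\gamma),\gamma)$. The hypothesis $a+c>0$ ensures $\alpha+\gamma>0$, so the first denominator is nonzero, and the second is nonzero exactly when $a\ne c$. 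Substituting back, the $\erfc(\alpha)$ and $\erfc(\gamma)$ contributions reorganize over the common denominator $\alpha^2-\gamma^2$; reintroducing the prefactor $e^{\gamma^2}$ converts $\erfc$ back into $H$, and the identity $\alpha^2-\gamma^2=(a^2-c^2)t/2$ produces the claimed formula $\bigl(aH(a\sqrt{t/2})-cH(c\sqrt{t/2})\bigr)/(a^2-c^2)$.

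The $a=c$ case follows by continuity of the left-hand side in $(a,c)$ on $\{a+c>0\}$: passing to the limit $c\to a$ in the formula for $a\ne c$ is a $0/0$ indeterminate form, resolved by a single application of L'Hopital's rule in $c$ together with the elementary identity $H'(x) = 2xH(x)-2/\sqrt\pi$ (which is immediate from $H(x)=e^{x^2}\erfc(x)$). I expect the main obstacle to be purely clerical — the sign bookkeeping across the two closely related integration-by-parts identities — together with the verification (straightforward but necessary) that the boundary term at $u=\infty$ vanishes in the second integral when $a<c$, which relies on the super-Gaussian decay of $\erfc(\gamma+u)$ dominating the possibly growing prefactor $e^{-2(\alpha-\gamma)u}$.
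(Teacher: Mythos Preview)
Your approach is correct and genuinely different from the paper's. The paper does not invoke Lemma~\ref{lem:FH} at all: it splits $\g_t(x,y)$ into its two Gaussian summands $e^{-(x\pm y)^2/(2t)}$ and evaluates each double integral directly via the change of variables $x=\theta u$, $y=(1-\theta)u$ (and a region split $x\gtrless y$ for the $(x-y)$ piece), reducing everything to the single Gaussian integral $\frac{1}{\sqrt{2\pi t}}\int_0^\infty e^{-u^2/(2t)-cu}du=\tfrac12 H(c\sqrt{t/2})$. For $a=c$ the paper recomputes the $(x+y)$ integral from scratch. Your route---integrate out $y$ via Lemma~\ref{lem:FH}, then use the $\erfc$ Laplace identities---is more modular (it reuses the previous lemma) and handles $a=c$ more cleanly by continuity and L'H\^opital, at the cost of the extra integration-by-parts bookkeeping you flag. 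Both are short; the paper's is slightly more elementary, yours slightly more systematic. Incidentally, your L'H\^opital limit gives $\frac{1+a^2t}{2a}H(a\sqrt{t/2})-\frac{t}{\sqrt{2\pi t}}$, which is consistent with the normalization constant~\eqref{eq:C_ac}; the extra $e^{a^2t/2}$ in the displayed $a=c$ formula of the lemma statement appears to be a typo in the paper.
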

\begin{proof}
Assume $a\ne c$ first. We first compute
\begin{equation}\label{eq:x+y}
\frac1{\sqrt{2\pi t}}\int_0^\infty\int_0^\infty  e^{-(x+y)^2/(2t) - a x-c y}dxdy = \frac1{2(a-c)}\pp{H(c\sqrt{t/2}) - H(a\sqrt{t/2})}.
\end{equation}
By change of variables $x = \theta u, y = (1-\theta) u$, %
the double integral becomes
\[
  \int_0^\infty u e^{-u^2/(2t)-c u}\int_0^1e^{-\theta u(a-c )}d\theta du
  = \frac1{a-c}\pp{\int_0^\infty e^{-u^2/(2t)-c u}du - \int_0^\infty e^{-u^2/(2t)-a u}du}.
\]
Since
\begin{equation}\label{eq:Hc}
\frac1{\sqrt{2\pi t}}\int_0^\infty e^{-u^2/(2t)-c u}du =
\frac{e^{c^2t/2}}{\sqrt{2\pi t}}\int_0^\infty e^{-(u+c t)^2/(2t)}du  = \frac 12 H\pp{c\sqrt{t/2}},
\end{equation}
it follows that \eqref{eq:x+y} holds.
Next, we show\begin{equation}\label{eq:x-y}
\frac1{\sqrt{2\pi t}}\int_0^\infty\int_0^\infty  e^{-(x-y)^2/(2t) - a x-c y}dxdy = \frac1{2(a +c)}\pp{H(c\sqrt{t/2}) + H(a\sqrt{t/2})}.
\end{equation}
This time, first consider the region $\mathcal{U}=\{(x,y)\in\R_+^2, x>y\}$, and for this region consider $x = u\theta, y = u(\theta-1)$. Then
 \begin{align*}
 \iint_{\mathcal{U}}
 e^{-(x-y)^2/(2t)-a x-c y}dxdy & = \int_0^\infty ue^{-u^2/(2t)+c u}\int_1^\infty e^{-(a +c)u\theta}d\theta du \\
 & = \frac1{a+c}\int_0^\infty e^{-u^2/(2t)-a u}du.
 \end{align*}
A similar calculation holds for the region $\{(x,y)\in\R_+^2, x<y\}$. Then, combining with \eqref{eq:Hc} we obtain \eqref{eq:x-y}. The desired result follows from \eqref{eq:x+y} and \eqref{eq:x-y}.

Next, assume $a= c$. Then \eqref{eq:x+y} becomes
\begin{align*}
\frac1{\sqrt {2\pi t}}\int_0^\infty ue^{-u^2/(2t)-a u}du
& = \frac{e^{a^2t/2}}{\sqrt{2\pi t}}\int_0^\infty u e^{-(u+a t)^2/(2t)}du
\\
& = \frac{e^{a^2t/2}}{\sqrt{2\pi t}}\pp{\int_{a t}^\infty ue^{-u^2/(2t)}du -a t \int_0^\infty e^{-(u+a t)^2/(2t)}du }\\
&
= \frac t{\sqrt{2\pi t}} - a t e^{a^2t/2} \frac 12 H(a\sqrt{t/2}).
\end{align*}
This time, by the above and \eqref{eq:x-y}, we have the desired result for $a=c$.
\end{proof}

\section{Asymptotics of Pochhammer symbols}

\begin{lemma} Fix $0\leq q<1$.
   For complex numbers $\alpha_n\in\mathbb C$ such that $\alpha_n\to \alpha$ with $|q\alpha|<1$ we have
\begin{equation}\label{1*1}
 (\alpha_n;q)_\infty =z_n(1-\alpha_n)(\alpha q;q)_\infty \mbox{ for some complex sequence } z_n\to 1.
\end{equation}
Furthermore, if $|\alpha_n |\leq 1$ then
\begin{equation}\label{2*2}
 (q;q)_\infty\; |1-\alpha_n| \;\leq |(\alpha_n;q)_\infty|\leq (-q;q)_\infty \; |1-\alpha_n| \;.
\end{equation}
\end{lemma}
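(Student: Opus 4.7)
The plan is to treat the two claims separately, since they are both short and elementary, but each relies on a different observation.

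For \eqref{1*1}, I would start from the one-step recursion $(\alpha_n;q)_\infty = (1-\alpha_n)(\alpha_n q;q)_\infty$, which follows from the definition of the $q$-Pochhammer symbol. The task then reduces to showing that $(\alpha_n q;q)_\infty \to (\alpha q;q)_\infty$, after which we can set $z_n := (\alpha_n q;q)_\infty / (\alpha q;q)_\infty$. Two things must be checked: first, that the denominator is non-zero, and second, that the numerator converges to it. The denominator is $\prod_{j=1}^\infty (1-\alpha q^j)$; since $|q\alpha|<1$ and $|q|<1$, we have $|\alpha q^j| \le |q\alpha| < 1$ for all $j\ge 1$, so each factor is non-zero. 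For the convergence, pick $r\in(|q\alpha|,1)$; then for all $n$ large enough, $|\alpha_n q^j|\le r$ uniformly in $j\ge 1$, so each factor satisfies $|1-\alpha_n q^j|\le 1+r$ and $\log |1-\alpha_n q^j|$ is dominated in absolute value by a constant multiple of $q^j$ uniformly in $n$. Termwise convergence $1-\alpha_n q^j \to 1-\alpha q^j$ combined with this uniform summable domination gives convergence of the infinite product, by dominated convergence applied to $\sum_j \log(1-\alpha_n q^j)$.

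For \eqref{2*2}, I would again factor $(\alpha_n;q)_\infty = (1-\alpha_n)\prod_{j=1}^\infty(1-\alpha_n q^j)$ and bound each factor with $j\ge 1$ using the triangle inequality: since $|\alpha_n|\le 1$ and $j\ge 1$,
\[
1-q^j \le 1-|\alpha_n q^j| \le |1-\alpha_n q^j| \le 1+|\alpha_n q^j| \le 1+q^j.
\]
Taking the product over $j\ge 1$ and multiplying by $|1-\alpha_n|$ yields exactly \eqref{2*2}, once we recognize $(q;q)_\infty = \prod_{j=1}^\infty(1-q^j)$ and $(-q;q)_\infty = \prod_{j=1}^\infty(1+q^j)$ from the definition.

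There is no real obstacle here; the only thing requiring a sentence of justification is the passage from termwise convergence of the factors to convergence of the infinite product in \eqref{1*1}, which is handled by the uniform bound $|q\alpha_n|\le r<1$ for large $n$. Everything else is a direct application of the recursion $(\alpha;q)_\infty = (1-\alpha)(\alpha q;q)_\infty$ and the triangle inequality.
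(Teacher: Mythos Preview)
Your proof is correct and follows essentially the same approach as the paper: the same one-step recursion $(\alpha_n;q)_\infty=(1-\alpha_n)(\alpha_n q;q)_\infty$ followed by continuity of the tail product for \eqref{1*1}, and the same factor-by-factor triangle inequality for \eqref{2*2}. The only cosmetic difference is that the paper obtains the continuity in \eqref{1*1} by citing that $z\mapsto(z;q)_\infty$ is analytic, whereas you prove it directly via a dominated-convergence argument on the logarithms; both are equally valid.
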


\begin{proof}%
It is well known  \cite{gaspar1990basic} that function
\[
 z\mapsto (z;q)_\infty=\sum_{k=0}^\infty (-z)^k q^{k(k-1)/2}/(q;q)_k
 \] is analytic if $|z|<1$.
Therefore,
\[(\alpha_n;q)_\infty =(1-\alpha_n)(\alpha_n q ;q)_\infty =(1-\alpha_n)(\alpha q ;q)_\infty z_n,\]
where
\[
z_n=\frac{(\alpha_n q ;q)_\infty}{(\alpha q ;q)_\infty}\to 1 \mbox{ as $n\to \infty$}.
\]

For the second part of the proof, since  $|\alpha_n|\leq 1$ and $q\geq 0$ we have
\begin{align*}
   |(\alpha_n;q)_\infty| & =|1-\alpha_n||1-\alpha_n q|\cdots|1-\alpha_nq^k|\cdots
     \geq
|1-\alpha_n|(1-|\alpha_n q|)\cdots(1-|\alpha_nq^k|)\cdots
\\
&\geq  |1-\alpha_n|(1-  q)\cdots(1- q^k)\cdots=|1-\alpha_n|(q;q)_\infty.
\end{align*}
Similarly,
$|(\alpha_n;q)_\infty|\leq |1-\alpha_n|  \;(1+ q)\cdots(1+ q^k)\cdots=|1-\alpha_n|(-q;q)_\infty
$.
\end{proof}

%%% BBL
\def\polhk#1{\setbox0=\hbox{#1}{\ooalign{\hidewidth
  \lower1.5ex\hbox{`}\hidewidth\crcr\unhbox0}}}

\end{document}